\newtheorem{theorem}{Theorem}
\newtheorem{corollary}[theorem]{Corollary}
\newtheorem{definition}[theorem]{Definition}
\newtheorem{lemma}[theorem]{Lemma}
\newtheorem{nonGlobalClaim}{Claim}[theorem]  
\newtheorem{observation}[theorem]{Observation}
\newtheorem*{question}{Question}
\newtheorem{fact}[theorem]{Fact}
\newcommand{\CH}{{\rm CH}}
\newcommand{\GCH}{{\rm GCH}}
\newcommand{\MA}{{\rm MA}}
\renewcommand{\P}{{\mathbb P}}
\newcommand{\Q}{{\mathbb Q}}
\newcommand{\R}{{\mathbb R}}
\newcommand{\Add}{\mathop{\rm Add}}
\renewcommand{\Col}{\mathop{\rm Col}}
\newcommand{\Code}{\mathop{\rm Code}}
\newcommand{\forces}{\Vdash}
\newcommand{\forced}{\Vdash}
\newcommand{\restrict}{\upharpoonright}
\newcommand{\<}{\langle}
\renewcommand{\>}{\rangle}
\newcommand{\elemsub}{\prec}
\newcommand{\st}{:}
\newcommand{\supp}{\mathop{\rm supp}}
\newcommand{\dom}{\mathop{\rm dom}}
\newcommand{\range}{\mathop{\rm range}}
\newcommand{\ot}{\mathop{\rm ot}\nolimits}
\newcommand{\cf}{\mathop{\rm cf}}
\newcommand{\cof}{\mathop{\rm cof}}
\newcommand{\id}{\mathop{\rm id}}
\newcommand{\crit}{\mathop{\rm crit}}
\renewcommand{\and}{\mathop{\&}}
\renewcommand{\diamond}{\diamondsuit}
\author[Brent Cody]{Brent Cody}
\address[Brent Cody]{ 
Virginia Commonwealth University,
Department of Mathematics and Applied Mathematics,
1015 Floyd Avenue, PO Box 842014, Richmond, Virginia 23284
} 
\email[B. ~Cody]{bmcody@vcu.edu} 
\urladdr{http://www.people.vcu.edu/~bmcody/}
\author[Monroe Eskew]{Monroe Eskew}
\address[Monroe Eskew]{ 
Virginia Commonwealth University,
Department of Mathematics and Applied Mathematics,
1015 Floyd Avenue, PO Box 842014, Richmond, Virginia 23284
} 
\email[M. ~Eskew]{mbeskew@vcu.edu}
\thanks{The authors would like to thank Sean Cox for suggesting this topic as well as several fruitful approaches {\Large $\Wintertree$ $\Coffeecup$ $\Wintertree$}}
\begin{document}

\title{Rigid ideals}

\maketitle


\begin{abstract}
An ideal $I$ on a cardinal $\kappa$ is called \emph{rigid} if all automorphisms of $P(\kappa)/I$ are trivial. An ideal is called \emph{$\mu$-minimal} if whenever $G\subseteq P(\kappa)/I$ is generic and $X\in P(\mu)^{V[G]}\setminus V$, it follows that $V[X]=V[G]$. We prove that the existence of a rigid saturated $\mu$-minimal ideal on $\mu^+$, where $\mu$ is a regular cardinal, is consistent relative to the existence of large cardinals.  The existence of such an ideal implies that $\GCH$ fails. However, we show that the existence of a rigid saturated ideal on $\mu^+$, where $\mu$ is an \emph{uncountable} regular cardinal, is consistent with $\GCH$ relative to the existence of an almost-huge cardinal. Addressing the case $\mu=\omega$, we show that the existence of a rigid \emph{presaturated} ideal on $\omega_1$ is consistent with $\CH$ relative to the existence of an almost-huge cardinal. The existence of a \emph{precipitous} rigid ideal on $\mu^+$ where $\mu$ is an uncountable regular cardinal is equiconsistent with the existence of a measurable cardinal.
\end{abstract}

\section{Introduction}\label{section_introduction}

An ideal $I$ on a cardinal $\kappa$ is said to be \emph{rigid} if all automorphisms of the boolean algebra $\mathcal{P}(\kappa)/I$ are trivial. Woodin proved \cite{Woodin} that if $\MA_{\omega_1}$ holds, then every saturated ideal on $\omega_1$ is rigid. Larson \cite{MR1955241} showed that we do not need the whole of Martin's Axiom to obtain the same conclusion; more specifically, Larson proved that if a certain cardinal characteristic of the continuum is greater than $\omega_1$, then every saturated ideal on $\omega_1$ is rigid. It is shown in \cite[Theorem 18]{MR924672}, that in models of $\MA_{\omega_1}$, every saturated ideal $I$ on $\omega_1$ has an additional property: if $G\subseteq\mathcal{P}(\omega_1)/I$ is generic and $r\in V[G]\setminus V$ is a real, then $V[r]=V[G]$. Given a cardinal $\mu$, we say that a poset $\P$ is \emph{$\mu$-minimal} if whenever $G\subseteq \P$ is generic and $X\in \mathcal{P}(\mu)^{V[G]}\setminus V$, it follows that $V[X]=V[G]$. When we say that an ideal $I\subseteq\mathcal{P}(\kappa)$ is $\mu$-minimal, we mean that the poset $\mathcal{P}(\kappa)/I$ is $\mu$-minimal. Thus, under $\MA_{\omega_1}$, every saturated ideal on $\omega_1$ is rigid and $\omega$-minimal.

In this article we extend the above results on rigidity and minimality properties of ideals. We first note the following easy generalization:
\begin{observation}
If $\kappa>\omega_1$ is a regular cardinal carrying a saturated ideal $I$, then there is a c.c.c.\ forcing $\mathbb P$ such that $\Vdash_{\mathbb P}$ ``The ideal generated by $I$ is rigid, saturated, and $\omega$-minimal.''
\end{observation}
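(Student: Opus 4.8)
The plan is to realize Martin's Axiom by a c.c.c.\ forcing and then invoke the known rigidity and minimality consequences, after checking that saturation survives. Let $\mathbb{P}$ be the usual finite support iteration of c.c.c.\ posets that forces $\MA_{\omega_1}$; by the Solovay--Tennenbaum analysis $\mathbb{P}$ is c.c.c. Fix $G\subseteq\mathbb{P}$ generic and let $\bar{I}$ be the ideal generated by $I$ in $V[G]$. The argument then splits into two parts: (a) $\bar{I}$ is saturated on $\kappa$ in $V[G]$; and (b) in $V[G]$, $\bar{I}$ is rigid and $\omega$-minimal. Part (b) is to follow from $\MA_{\omega_1}$ via the theorems cited in the introduction, once one checks that those arguments relativize from $\omega_1$ to $\kappa$; part (a) is a preservation statement that I would handle first.

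For (a): the two-step iteration $\mathbb{P}*\mathcal{P}(\kappa)/\dot{\bar{I}}$ has a dense subset isomorphic to the product $\mathbb{P}\times\bigl(\mathcal{P}(\kappa)/I\bigr)$. Indeed, given a $\mathbb{P}$-name $\dot{A}$ and a condition $p$ forcing $\dot{A}$ to be $\bar{I}$-positive, the ground-model set $B=\{\xi<\kappa:p\not\Vdash\xi\notin\dot{A}\}$ lies in $I^+$ and $p\Vdash\dot{A}\subseteq\check{B}$, so the conditions $(p,\check{B})$ with $B\in I^+$ are dense, ordered exactly as in the product. Since $\mathbb{P}$ is c.c.c.\ (hence $\kappa^+$-c.c.) and $\kappa^+$ is regular, $\bar{I}$ is $\kappa^+$-saturated in $V[G]$ if and only if $\mathbb{P}\times\bigl(\mathcal{P}(\kappa)/I\bigr)$ is $\kappa^+$-c.c.\ in $V$; this is the standard fact that c.c.c.\ forcing preserves saturated ideals, due to Baumgartner and Taylor. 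It is especially transparent when $\mathbb{P}$ is small relative to $\kappa$, $\sigma$-centered, or $\kappa^+$-Knaster---the last holding for the $\MA_{\omega_1}$ iteration whenever $2^{\aleph_0}$ is small in $V$, by the $\Delta$-system lemma applied to the finite supports---since then the product is patently $\kappa^+$-c.c.

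The step I expect to be the real obstacle is (b): checking that Woodin's proof (and Larson's refinement \cite{MR1955241}) that $\MA_{\omega_1}$ yields rigidity of saturated ideals on $\omega_1$, and the proof of \cite[Theorem~18]{MR924672} that $\MA_{\omega_1}$ yields $\omega$-minimality, go through with $\omega_1$ replaced by an arbitrary regular $\kappa>\omega_1$. This should work: a $\kappa^+$-saturated ideal on $\kappa$ still yields a generic ultrapower $j\colon V\to M$ closed under $\kappa$-sequences, an automorphism of $\mathcal{P}(\kappa)/\bar{I}$ still induces, after forcing, two generic embeddings with a common target, and the use of $\MA_{\omega_1}$---to compare two such embeddings, and respectively to show that a single new real recovers the whole generic---concerns only countable objects and so is insensitive to the size of $\kappa$. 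Granting (a) and (b), $\bar{I}$ is rigid, saturated and $\omega$-minimal in $V[G]$, as desired; note in passing that $V[G]\models 2^{\aleph_0}\ge\aleph_2$.
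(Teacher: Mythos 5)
The central step fails: you force only $\MA_{\omega_1}$, but the arguments for rigidity and $\omega$-minimality require $\MA_\kappa$, which is what the paper actually forces with the Solovay--Tennenbaum iteration. Your assertion that the uses of Martin's Axiom in Woodin's and Larson's proofs, and in \cite[Theorem~18]{MR924672}, ``concern only countable objects and so are insensitive to the size of $\kappa$'' is not correct. In the minimality argument, $r \in V[G]\setminus V$ is represented by a one-to-one $f:\kappa\to\mathcal P(\omega)$ with almost-disjoint range; to recover whether $Y\in G$ from $r$, one needs, for each $Y\subseteq\kappa$, a real $y$ such that $|y\cap f(\alpha)|=\omega$ iff $\alpha\in Y$. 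Producing such a $y$ by almost-disjoint coding requires meeting $\kappa$-many dense sets in a $\sigma$-centered poset of size $\kappa$; that is precisely $\MA_\kappa$, and $\MA_{\omega_1}$ gives nothing when $\kappa>\omega_1$. The same issue arises in the rigidity argument, where one needs a real distinguishing $f[A]$ from $g[B]$ with $|f[A]\cup g[B]|=\kappa$. Note also that $\MA_\kappa$ entails $2^\omega>\kappa$, so the remark at the end that $2^{\aleph_0}\ge\aleph_2$ drastically understates what the forcing must do.

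There is also a problem in part (a), though the conclusion is salvageable. You claim that the conditions $(p,\check B)$ with $B\in I^+$ are dense in $\mathbb P*\mathcal P(\kappa)/\dot{\bar I}$, but the construction $B=\{\xi<\kappa : p\not\Vdash\xi\notin\dot A\}$ gives $p\Vdash\dot A\subseteq\check B$, hence $(p,\dot A)\leq(p,\check B)$ --- you have produced a \emph{weaker} condition of the desired form, not a stronger one, so this does not establish density. In fact, for a generic name $\dot A$ (e.g.\ a name for the set of $\xi$ where a Cohen function is $0$), no $(q,\check B)$ need lie below $(p,\dot A)$, since $q\Vdash\check B\setminus\dot A\in\bar I$ would require a ground-model $I$-small set covering a genuinely new positive set. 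What is true, and what the paper uses, is Corollary~\ref{theorem_baumgartner_taylor} (via Foreman's duality): $\mathcal B(\mathbb P*\mathcal P(\kappa)/\dot{\bar I})\cong\mathcal B(\mathcal P(\kappa)/I*j(\dot{\mathbb P}))$, and one checks that $j(\mathbb P)$ is forced to be $\kappa^+$-c.c.\ in the generic ultrapower. You do cite Baumgartner--Taylor, so the saturation conclusion is fine, but the proposed direct product argument does not work as written.
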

To prove this, we let $\mathbb P$ be the Solovay-Tennenbaum forcing \cite{MR0294139} to obtain MA$_\kappa$.  An easy application of Corollary \ref{theorem_baumgartner_taylor} below shows that the saturation of $I$ is preserved.  The arguments for rigidity and $\omega$-minimality are identical to those of \cite{MR1955241} and \cite{MR924672}.  We do not know if such ideals will be $\mu$-minimal for other $\mu < \kappa$. Nonetheless, the following theorem shows that the existence of a rigid, saturated, $\mu$-minimal ideal on $\mu^+$, where $\mu$ is an uncountable regular cardinal, is consistent relative to large cardinals.

\begin{theorem}\label{theorem_rigid_saturated_minimal}
Suppose $\GCH$ holds and $I$ is a normal saturated ideal on $\mu^+$ where $\mu$ is a regular uncountable cardinal. Then there is a ${<}\mu$-distributive forcing extension in which the ideal generated by $I$ is rigid, saturated and $\mu$-minimal.
\end{theorem}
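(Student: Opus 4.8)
The plan is to pass to a ${<}\mu$-closed (hence ${<}\mu$-distributive) forcing extension $V^\P$ in which the generic for the quotient of the generated ideal is forced to be definable from ground-model data so strongly that all automorphisms die and $\mu$-minimality holds. Preservation of saturation will come from Corollary \ref{theorem_baumgartner_taylor}, and $\GCH$ will be used for bookkeeping. Write $Q=P(\mu^+)/I$, and in $V^\P$ let $\bar I$ be the ideal generated by $I$ and $\bar{Q}=P(\mu^+)/\bar I$. The facts that make $\mu$-minimality even conceivable are these: for $\bar{Q}$-generic $H$ the generic ultrapower $j_H\colon V^\P\to M$ has $\crit j_H=\mu^+$, and saturation gives $M^{\mu^+}\cap V^\P[H]\subseteq M$, so that $\mu^{++}$ is preserved, $\mu^+$ is collapsed to $\mu$, and $P(\mu)^{V^\P[H]}\subseteq M$ (a subset of $\mu$ is a $\mu$-sequence of elements of $M$). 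Hence every new subset of $\mu$ in $V^\P[H]$ has the form $\{\xi<\mu : \{\eta : \xi\in f(\eta)\}\in H\}$ for some $f\colon\mu^+\to P(\mu)$ in $V^\P$, while $H$ itself is already coded by $j_H\restriction\mu^{++}$ --- data which, since $(\mu^+)^{V^\P}$ has size $\mu$ in $V^\P[H]$, could in principle be reorganized as a single subset of $\mu$.

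Accordingly, let $\P$ be a ${<}\mu$-support iteration of length $\mu^{++}$, guided by a $\GCH$-bookkeeping of all relevant $\bar{Q}$-names, whose iterands are ${<}\mu$-closed coding posets, arranged so that in $V^\P$ there are a formula $\varphi$ and a parameter $p$ for which: (R) every $\bar{Q}$-generic $H$ satisfies $H=\{A : \varphi(A,p)\}^{V^\P[H]}$; and (M) for every $\bar{Q}$-name $\dot X$ for a subset of $\mu$, it is forced that if $\dot X\notin V^\P$ then $H$ is computable from $\dot X$ over $V^\P$. Given (R), $\bar I$ is rigid: for $\pi\in\Aut(\bar{Q})$ and $\bar{Q}$-generic $H$, the filter $\pi[H]$ is also generic with $V^\P[\pi[H]]=V^\P[H]$, so $\pi[H]=\{A : \varphi(A,p)\}^{V^\P[\pi[H]]}=H$; as $H$ was arbitrary, $\pi=\id$. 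Given (M), $\bar I$ is $\mu$-minimal: if $X\in P(\mu)^{V^\P[H]}\setminus V^\P$ then $H\in V^\P[X]$, so $V^\P[X]=V^\P[H]$. That $\P$ is ${<}\mu$-closed, hence ${<}\mu$-distributive, is built in, and $\bar I$ is saturated by Corollary \ref{theorem_baumgartner_taylor}.

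The crux --- and the step I expect to be genuinely delicate --- is the construction of the coding iterands forcing (R) and especially (M). Two tensions must be overcome. First, $\P$ codes information about generics for $\bar{Q}$, but $\bar{Q}$ lives \emph{after} $\P$ and is itself produced by $\P$ from $Q$; the coding tasks therefore have to be phrased through $\bar{Q}$-names, giving the construction a fixed-point character. Second, and more essential: a $\bar{Q}$-generic $H$ is a priori an object of size $\mu^{++}$, recoverable from a lone subset of $\mu$ only because $\bar{Q}$ collapses $\mu^+$ to $\mu$; so the coding must be engineered so that \emph{every} new subset of $\mu$, read against the $\P$-generic, by itself yields a surjection $\mu\to(\mu^+)^{V^\P}$ and then the full decoding of $j_H\restriction\mu^{++}$ --- all while $\P$ stays ${<}\mu$-closed and $\bar{Q}$ retains the $\mu^{++}$-chain condition that saturation requires. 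By contrast clause (R) is soft once the coding apparatus is in place, being a variant of the Woodin--Larson argument invoked for the Observation above.
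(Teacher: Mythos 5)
Your high-level frame is right in several ways: you correctly choose a ${<}\mu$-support iteration of ${<}\mu$-closed posets of length $\mu^{++}$ guided by bookkeeping, you identify that Corollary~\ref{theorem_baumgartner_taylor} should preserve saturation, and you correctly observe that new subsets of $\mu$ in the quotient-generic extension arise as $[f]$ for $f\colon\mu^+\to\mathcal P(\mu)$ in $V^\P$. But the crux of the argument --- what the coding iterands actually are and why they deliver rigidity and $\mu$-minimality --- is entirely unspecified, and you flag this yourself. That is the theorem; what you have written is a target specification, not a proof.

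Concretely, the missing mechanism is the Kunen almost-disjoint-coding poset $\mathbb C_{\mathcal A,\mathcal B}$: conditions $(s,T)$ with $s$ a bounded subset of $\mu$ and $T\subseteq\mathcal A$ of size $<\mu$, which is ${<}\mu$-closed and $2^{<\mu}$-centered and generically adds $y\subseteq\mu$ with $|y\cap a|<\mu$ for $a\in\mathcal A$ and $|y\cap b|=\mu$ for $b\in\mathcal B$. Iterating these with bookkeeping forces: for every $V^\P$-family of almost-disjoint subsets of $\mu$ coded by $f\colon\mu^+\to\mathcal P(\mu)$ (using $2^{<\mu}=\mu$ to view each $f(\alpha)$ as a branch through $2^{<\mu}$) and every $Y\subseteq\mu^+$ in $V^\P$, there is $y\subseteq\mu$ with $y\cap f(\alpha)$ unbounded iff $\alpha\in Y$. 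This is precisely what lets one decode $G$ from a new $x\subseteq\mu$ via ``$Y\in G\Leftrightarrow y\cap x$ unbounded,'' giving $\mu$-minimality. Without this (or some equivalently explicit coding), your clause (M) is an assumption, not a conclusion, and your fixed-point worry about $\bar Q$ living after $\P$ is not actually resolved.

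Your clause (R) also takes a different --- and stronger --- route to rigidity than is needed or, as far as one can tell, achievable here. The paper does \emph{not} make the $\bar Q$-generic definable from a parameter in $V^\P[H]$. Instead, rigidity is proved by contradiction: two distinct generics $G_0\ne G_1$ with $V^\P[G_0]=V^\P[G_1]$ give two ultrapower maps $j_0,j_1$ and representations $x=j_0(f)(\kappa)=j_1(g)(\kappa)$ of the same new $x\subseteq\mu$, with $f\restriction A$ and $g\restriction B$ injective on disjoint $A,B$; after disjointifying the ranges (Lemma~\ref{disjoint}), the coding supplies $y\subseteq\mu$ with $|y\cap z|=\mu$ for $z\in f[A]$ and $|y\cap z|<\mu$ for $z\in g[B]$, yielding $M_0\models|y\cap x|=\mu$ while $M_1\models|y\cap x|<\mu$, an absolute statement --- contradiction. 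No definability of $H$ is established or used. If you intend to prove (R) as stated, you owe an argument; it is not the variant-of-Woodin--Larson soft step you describe, and the Observation's argument does not give it.

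One more point: Observation~\ref{observation_GCH_must_fail} tells you that $\GCH$ must fail in the final model; indeed the iteration forces $2^\mu=\mu^{++}$. Your write-up speaks of ``$\GCH$ used for bookkeeping,'' which is fine for counting names along the way, but you should make explicit that you are not (and cannot be) preserving $\GCH$, so that preservation of $\mu^{++}$ and the $\kappa$-c.c.\ of the iteration need their own $\Delta$-system argument rather than a generic appeal to $\GCH$.
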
 

The proof of Theorem \ref{theorem_rigid_saturated_minimal} given below uses methods of Larson \cite{MR1955241} mentioned above which involve exploiting the fact that one may force a certain cardinal characteristic to be large, and thus in the forcing extension, $2^\mu > \mu^+$.

Notice that in all the models with rigid saturated ideals mentioned thus far, $\GCH$ fails. Indeed, as we show in Section \ref{section_rigid_minimal}, $\GCH$ implies that for every regular cardinal $\mu$, there does not exist a $\mu$-minimal presaturated ideal on $\mu^+$. It is natural to wonder whether or not the situation changes if we remove the minimality requirement: is the existence of a rigid saturated ideal on some successor cardinal consistent with $\GCH$? We will show that the existence of a rigid saturated ideal on $\mu^+$, where $\mu$ is an \emph{uncountable} regular cardinal, is consistent with $\GCH$, relative to the existence of an almost-huge cardinal.

\begin{theorem}\label{theorem_rigid_saturated}
Suppose that $\kappa$ is an almost-huge cardinal and $\mu<\kappa$ is an uncountable regular cardinal. Then there is a ${<}\mu$-distributive forcing extension in which there is a rigid saturated ideal on $\mu^+$ and $\GCH$ holds.
\end{theorem}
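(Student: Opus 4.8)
The plan is to reach the model in two steps whose composite is ${<}\mu$-distributive over $V$. First, from the almost-huge embedding we run Kunen's construction to obtain a model $W\models\GCH$ in which $\kappa$ has become $\mu^{+}$ and carries a normal, $\kappa^{+}$-saturated ideal $I$; this step can be taken ${<}\mu$-closed. Second, we force over $W$ with a ${<}\mu$-distributive poset $\Q$ designed so that in $W^{\Q}$ the generic filter for the quotient of the ideal generated by $I$ is internally definable from a ground-model parameter; this forces rigidity. Since a ${<}\mu$-distributive forcing over a ${<}\mu$-distributive extension is again ${<}\mu$-distributive, $W^{\Q}$ is a ${<}\mu$-distributive extension of $V$, and $\GCH$ together with saturation is tracked through both steps.

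\textbf{Step 1: the saturated ideal with $\GCH$.} After first forcing $\GCH$ (which preserves almost-hugeness) we may assume $V\models\GCH$; fix $j\colon V\to M$ with $\crit(j)=\kappa$, $j(\kappa)=\lambda$ and ${}^{<\lambda}M\subseteq M$. Kunen's method --- an Easton-support iteration of ${<}\mu$-closed collapses below $\kappa$ forcing $\kappa=\mu^{+}$, followed by a ${<}\kappa$-closed, highly c.c.\ Silver-style collapse that allows $j$ to be lifted --- produces a model $W$ in which $\kappa=\mu^{+}$ carries a normal, $\kappa^{+}$-saturated ideal $I$; the saturation reflects the ${<}\lambda$-closure of $M$ surviving the collapses. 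Because every factor is ${<}\mu$-closed, $W$ is a ${<}\mu$-closed extension of $V$, and a routine cardinal-arithmetic computation --- each factor being small relative to its stage, or highly closed, or highly c.c., together with $\GCH$ in $V$ --- gives $W\models\GCH$, with $(\kappa^{+})^{W}=\lambda$.

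\textbf{Step 2: rigidity.} Over $W$, force with a poset $\Q$ that is ${<}\mu$-distributive, cardinal-preserving, $\GCH$-preserving, and $\kappa^{+}$-c.c.; the last two properties let Corollary~\ref{theorem_baumgartner_taylor} guarantee that the ideal $\bar I$ generated by $I$ in $W^{\Q}$ is again normal and $\kappa^{+}$-saturated and that $W^{\Q}\models\GCH$. The key demand on $\Q$ is that it \emph{code its own ideal generic}: there should be a formula $\varphi$, with a parameter $z\in W^{\Q}$, such that in $W^{\Q}$ it is forced that whenever $G$ is a $W^{\Q}$-generic filter on $\mathcal P(\kappa)/\bar I$, the filter $G$ is the unique $W^{\Q}$-generic filter on $\mathcal P(\kappa)/\bar I$ lying in $W^{\Q}[G]$ that satisfies $\varphi(\cdot,z)$. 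Granting this, suppose $\pi$ is an automorphism of $\mathcal P(\kappa)/\bar I$ in $W^{\Q}$ different from the identity. Choose $b$ with $\pi(b)\neq b$ and set $c:=b\wedge\neg\pi(b)$ (nonzero, after possibly interchanging $b$ and $\pi(b)$); then $c\forces\pi[\dot G]\neq\dot G$. Let $G\ni c$ be generic. Then $\pi[G]$ is also a $W^{\Q}$-generic filter on $\mathcal P(\kappa)/\bar I$ and it lies in $W^{\Q}[G]=W^{\Q}[\pi[G]]$, so --- being a $W^{\Q}$-generic filter in that model --- it too is picked out by $\varphi(\cdot,z)$ there; by uniqueness $G=\pi[G]$, contradicting $c\in G$. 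Hence $\mathcal P(\kappa)/\bar I$ has no automorphism but the identity, and since $\bar I$ is normal the only trivial automorphism of $\mathcal P(\kappa)/\bar I$ is already the identity, so $\bar I$ is rigid, as required.

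\textbf{The main obstacle.} Essentially all the work is the construction of $\Q$: producing a ${<}\mu$-distributive, cardinal-preserving, $\kappa^{+}$-c.c.\ forcing that preserves $\GCH$ and after which the generic of the saturated quotient becomes internally definable --- a $\GCH$-respecting substitute for the definability of generic ultrapowers that Larson~\cite{MR1955241} obtains by making a cardinal characteristic large. The natural candidate replaces a Cohen-style coding at $\kappa^{+}$ by a minimal (``Sacks/coding''-style) variant, so that no second $W^{\Q}$-generic filter can present the same configuration as $G$; the delicate points are keeping this forcing $\kappa^{+}$-c.c.\ (so that Corollary~\ref{theorem_baumgartner_taylor} still yields saturation of $\bar I$) and confirming that $\bar I$ stays normal (so that ``rigid'' genuinely means ``only the identity''). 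This is also where $\mu$ must be uncountable: $\Q$ must add no new sequences of ordinals of length ${<}\mu$, a requirement that is vacuous at $\mu=\omega$, where correspondingly no rigid \emph{saturated} ideal on $\omega_{1}$ under $\CH$ is produced --- in line with the weaker \emph{presaturated} conclusion available in that case.
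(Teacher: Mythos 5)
Your outline correctly identifies the skeleton of the argument: first use Kunen's universal collapse (over a model of $\GCH$) to obtain a $\kappa^+$-saturated ideal $I$ on $\kappa = \mu^+$, then force with a carefully chosen poset $\Q$ that preserves the saturation while making the quotient generic unique, and derive rigidity from uniqueness of generics. The ``unique generic implies no nontrivial automorphism'' argument you give is essentially the one the paper uses (via Lemma~\ref{lemma_unique_generic}(3) and its analogue). So far so good. But there is a genuine gap, and you have in fact labeled it yourself: ``essentially all the work is the construction of $\Q$,'' and you never construct $\Q$. For this theorem, that is not a residual detail but the entire content.

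Two further points. First, a concrete error: you require $\Q$ to be $\kappa^+$-c.c.\ and invoke Corollary~\ref{theorem_baumgartner_taylor}, but that corollary requires the forcing to be $\kappa$-c.c.\ (and then one must check $j(\Q)$ is $\kappa^+$-c.c.\ in the extension by $\mathcal P(\kappa)/I$). A merely $\kappa^+$-c.c.\ $\Q$ is not enough to run the Duality/Baumgartner--Taylor transfer, and indeed the chain condition at $\kappa$ is exactly what makes the saturated case (as opposed to the presaturated case, Theorem~\ref{theorem_rigid_presaturated}) hard: the natural Friedman--Magidor style club-shooting on $\kappa$ is only ${<}\kappa$-distributive, not $\kappa$-c.c., so it can kill saturation. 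Second, your proposed ``Sacks/coding-style variant'' at $\kappa^+$ is not what works here. The paper's coding forcing $\mathbb{C}$ adds, for each $\alpha < \kappa$, a club subset of $\mu$ (not $\kappa$), and codes membership of $f(\alpha)$ in $G$ and membership of $\beta$ in the $\alpha$-th club into the stationarity of subsets $S^\beta_\alpha \subseteq \mu$. Those $S^\beta_\alpha$ are manufactured from a $\blacklozenge_\mu$-sequence (a ``fat diamond'' sequence, defined in Section~\ref{section_rigidity_with_gch}) and from sets $X_\gamma \subseteq \mu$ coding initial segments of the Kunen generic; this is precisely what makes $\mathbb C$ both $\kappa$-c.c.\ (via a $\Delta$-system argument, since conditions are $<\mu$-sized partial functions $\kappa \to \mathcal P(\mu)$) and ${<}\mu$-distributive (Lemma~\ref{satlem}). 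Nothing in your sketch produces this object or its properties, and the distributivity argument requires a nontrivial reflection/elementary-submodel construction which is additionally delicate when $\mu$ is the successor of a singular (handled separately in Section~\ref{section_singular} via an Erd\H{o}s--Rado argument). Without the construction of $\mathbb C$ and the verification of $\kappa$-c.c., ${<}\mu$-distributivity, preservation of the fat stationarity needed for decoding, and the resulting uniqueness-of-generics lemma, the proof is not there.
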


Notice that Theorem \ref{theorem_rigid_saturated} fails to address the case of ideals on $\omega_1$. We will show that it is consistent relative to the existence of an almost-huge cardinal that $\omega_1$ carries a rigid \emph{presaturated} ideal while $\GCH$ holds.

\begin{theorem}\label{theorem_rigid_presaturated}
Suppose that $\kappa$ is an almost-huge cardinal and $\mu<\kappa$ is a regular cardinal. Then there is a ${<}\mu$-distributive forcing extension in which there is a rigid presaturated ideal on $\mu^+$ and $\GCH$ holds.
\end{theorem}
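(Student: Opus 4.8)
For uncountable $\mu$ the statement follows at once from Theorem \ref{theorem_rigid_saturated}, since a saturated ideal is in particular presaturated, and $\GCH$, rigidity, and ${<}\mu$-distributivity are already delivered there. Thus the plan is to treat the one remaining case $\mu=\omega$: starting from an almost-huge cardinal $\kappa$, I would produce a forcing extension of $\CH$ in which $\omega_1$ carries a rigid, normal, $\aleph_1$-complete, presaturated ideal. Fix an almost-huge embedding $j\colon V\to M$ with $\crit(j)=\kappa$, $j(\kappa)=\lambda$, and $M^{<\lambda}\subseteq M$; after shrinking $M$ we may assume $\lambda$ is inaccessible and that the interval $(\kappa,\lambda)$ is normalized in the usual sense.

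The forcing is $\mathbb P=\mathbb P_\kappa*\dot{\mathbb Q}$, where $\mathbb P_\kappa$ is a reverse Easton iteration of length $\kappa$ with small stages that (a) forces $\GCH$ below $\kappa$, (b) at a carefully chosen set of inaccessible stages $\gamma<\kappa$ interleaves a rigidity-coding poset $\dot{\mathbb S}_\gamma$ of the sort used in the proof of Theorem \ref{theorem_rigid_saturated}, and (c) is set up so that $j(\mathbb P_\kappa)$ factors as $\mathbb P_\kappa*\dot{\mathbb Q}*\dot{\mathbb R}$. Here $\dot{\mathbb Q}$ is, in $W_0:=V^{\mathbb P_\kappa}$, Kunen's collapse that makes $\kappa=\omega_1$ and seals enough antichains, and $\dot{\mathbb R}$ is the tail. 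Since $\mathbb P_\kappa$ has small stages, $j\restriction\mathbb P_\kappa=\mathrm{id}$, so after forcing with $\mathbb P$ and then with a suitable quotient of $\dot{\mathbb R}$, Kunen's analysis lifts $j$ to $\hat j\colon W\to N$ with $N$ transitive, where $W=V^{\mathbb P}$. In $W$ I would let $I$ be the dual of the normal filter generated by the Boolean values $\|\,\check\kappa\in\hat j(X)\,\|$ for $X\in P(\kappa)^{W}$. Standard arguments then give that $I$ is a normal, $\aleph_1$-complete ideal on $\omega_1$, that $P(\omega_1)/I$ is forcing equivalent to a dense subset of the quotient forcing $\mathbb R$, and that the induced generic ultrapower — a further lift of $\hat j$ — is well-founded, so $I$ is precipitous.

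It then remains to verify three things. \emph{$\GCH$:} $\mathbb P_\kappa$ has size $\kappa$, and together with (a) and a nice-name count it preserves $\GCH$ at and above $\kappa$ while forcing $2^{\aleph_0}=\kappa=\aleph_1$; the collapse $\dot{\mathbb Q}$ preserves $\GCH$ as in Kunen's construction, so $\GCH$, hence $\CH$, holds in $W$. \emph{Presaturation:} using $M^{<\lambda}\subseteq M$, in $M^{\mathbb P}$ the tail $\mathbb R$ meets every ${<}\lambda$-sequence of maximal antichains below a single condition; passing through the duality between $P(\omega_1)/I$ and $\mathbb R$, this says that for each $\gamma<\omega_1$ the set of conditions in $P(\omega_1)/I$ that reduce any prescribed $\gamma$-sequence of maximal antichains to boundedly many pieces is dense, i.e.\ $I$ is presaturated; in particular $\omega_2^{W}$ is preserved. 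This is exactly the point at which, because an almost-huge cardinal yields only $M^{<\lambda}\subseteq M$ rather than $M^{\lambda}\subseteq M$, and because there is no ${<}\mu$-closed Laver-style collapse to exploit when $\mu=\omega$ — the device giving \emph{full} saturation in Theorem \ref{theorem_rigid_saturated} — one must settle for presaturation. \emph{Rigidity:} suppose toward a contradiction that $\pi$ is an automorphism of $P(\omega_1)/I$ that is nontrivial below some condition $a$. Forcing below $a$ with $P(\omega_1)/I$ and applying $\pi$ yields generics $G_0$ and $\pi[G_0]$ together with well-founded generic ultrapowers $\hat j_{G_0},\hat j_{\pi[G_0]}\colon W\to N_0,N_1$. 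The coding posets $\dot{\mathbb S}_\gamma$ are chosen so that $W[G_0]=W[\pi[G_0]]$ and so that in this common extension the generic embedding attached to $I$ is the \emph{unique} embedding satisfying a fixed first-order requirement about the coded objects; hence $\hat j_{G_0}=\hat j_{\pi[G_0]}$, and since $X\in G_0$ iff $\kappa\in\hat j_{G_0}(X)$, we conclude $G_0=\pi[G_0]$. A density argument — the Woodin--Larson rigidity argument, exactly as in the proof of Theorem \ref{theorem_rigid_saturated} — then forces $\pi$ to be trivial, a contradiction.

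The principal obstacle is the design of the coding posets $\dot{\mathbb S}_\gamma$: they must be strong enough that the coded information is reconstructed by \emph{every} generic ultrapower of $W$ by $I$ (so that the rigidity argument goes through), yet mild enough that they preserve $\GCH$, do not disturb the factorization of $j(\mathbb P_\kappa)$ or the lift of $j$, and do not interfere with the presaturation count — and, since at $\mu=\omega$ no ${<}\mu$-closed auxiliary forcing is available, all of this coding must be carried by the small stages of the reverse Easton iteration below $\kappa$. Checking that Kunen's collapse together with this coding still admits the lifting of $j$, and that presaturation is not destroyed by the coding, is the main technical work; the remainder follows the templates of Kunen's saturated-ideal construction and of the proof of Theorem \ref{theorem_rigid_saturated}.
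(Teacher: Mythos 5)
Your reduction of the uncountable $\mu$ case to Theorem~\ref{theorem_rigid_saturated} is logically sound, but note the paper actually proves Theorem~\ref{theorem_rigid_presaturated} \emph{before} Theorem~\ref{theorem_rigid_saturated}, with a single construction that handles all regular $\mu$ uniformly, including $\mu = \omega$. So as a route through the paper the reduction is circular, even if the mathematical content is correct.

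The real gap is in your treatment of $\mu = \omega$, which is precisely the case that cannot be shrugged off to Theorem~\ref{theorem_rigid_saturated}. Your plan is to interleave coding posets $\dot{\mathbb S}_\gamma$ at inaccessible stages $\gamma < \kappa$ of a reverse Easton iteration. This does not work as sketched, and you have (understandably, but fatally) left the entire content of the proof in the phrase ``the coding posets are chosen so that \ldots''. Concretely: a coding poset applied at stage $\gamma$ can only encode information about the generic restricted below $\gamma$, and whatever stationary or diamond-like target it uses must live on some ordinal $\leq \gamma$. After $\dot{\mathbb Q}$ collapses $\kappa$ to $\omega_1$, every such $\gamma$ becomes countable, so whatever stationarity pattern was laid down at stage $\gamma$ is obliterated, and the reconstruction step of the rigidity argument has nothing to hold onto. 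The device used in Theorem~\ref{theorem_rigid_saturated} --- coding into clubs of $\mu$ guided by a $\blacklozenge_\mu$-sequence, with a $\kappa$-c.c.\ coding poset --- is genuinely unavailable when $\mu = \omega$, but the correct response is not to push the coding below $\kappa$; it is to push it \emph{above}.

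What the paper does: after $\mathbb P * \Col(\kappa,{<}\lambda)$, it forces with a Friedman--Magidor-style club-shooting poset $\Code(G,\vec E_\kappa)$ that adds a single closed unbounded $C \subseteq \kappa = (\mu^+)^{V[G]}$. The targets $E_\alpha$ are subsets of $\kappa$ of fixed uncountable ground-model cofinality $\geq \mu^+$, partitioned into four families. Membership of $b(i)$ in $G$ is recorded by which of $E_{W(i)}$, $E_{X(i)}$ is killed, and membership of $i$ in $C$ itself is recorded by which of $E_{Y(i)}$, $E_{Z(i)}$ is killed, so the generic codes both the collapse generic and itself (Lemma~\ref{lemma_unique_generic}). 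This poset is ${<}\kappa$-distributive in $V[G]$ (Lemma~\ref{lemma_code_distributive}), and the distributivity argument only requires an elementary submodel $N$ of size $\mu$ with $N \cap \kappa$ of $V$-cofinality $\mu$; for $\mu = \omega$ the $<\mu$-closure requirement on $N$ is vacuous. So the same construction works at $\omega$ with no adjustment, and this is exactly the technology you say is unavailable. The cost, which you attribute to the closure of $M$, is actually elsewhere: both theorems use the same almost-huge embedding. What fails at $\mu = \omega$ is that $\Code(G,\vec E_\kappa)$ is ${<}\kappa$-distributive but not $\kappa$-c.c., so Corollary~\ref{theorem_baumgartner_taylor} cannot be used to preserve saturation --- the quotient preserves $\kappa^+$ (distributivity plus $\lambda$-c.c.\ of $j(\mathbb P)$), hence presaturation, but nothing forces it to be $\kappa^+$-c.c. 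In Theorem~\ref{theorem_rigid_saturated} the coding is done at level $\mu$ with a $\kappa$-c.c.\ poset, which is why saturation survives there; that is the actual trade-off, not anything about $M^{<\lambda}$ versus $M^{\lambda}$.

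Finally, you do correctly identify Foreman's duality as the mechanism to identify $\mathcal P(\kappa)/I$ with the quotient forcing, and the Lemma~\ref{lemma_unique_generic}-style uniqueness of generics as the mechanism for rigidity; those two ingredients are right. The missing piece is the concrete, ${<}\kappa$-distributive coding \emph{after} the collapse, at the level of $\kappa = \mu^+$, rather than the undefined stage-by-stage coding you propose.
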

Our proofs of Theorem \ref{theorem_rigid_saturated} and Theorem \ref{theorem_rigid_presaturated} will involve using a variation of a coding forcing introduced by Friedman and Magidor in \cite{MR2548481}, which they used to control the number of normal measures carried by a measurable cardinal.\footnote{A variation of Friedman and Magidor's coding forcing was used by Ben-Neria \cite{MR3397347} to show that \emph{any} well founded order can be realized as the Mitchell order $\lhd(\kappa)$ on a measurable cardinal $\kappa$.} Assuming $\kappa$ is an almost-huge cardinal and $\mu<\kappa$ is regular, we will use a coding forcing to define a forcing $\P$ such that if $G\subseteq\P$ is generic over $V$, then in $V[G]$ we have $\kappa=\mu^+$, there is a saturated ideal $I$ on $\kappa$, and forcing with $\mathcal{P}(\kappa)/I$ over $V[G]$ produces an extension $V[G*H]$ in which $H$ is the unique generic filter for $\mathcal{P}(\kappa)/I$ over $V[G]$. Hence the ideal $I\in V[G]$ is rigid. 

The situation in the present article differs from the Friedman and Magidor results \cite{MR2548481} in the following interesting way. One may force a measurable cardinal $\kappa$ to cary exactly two measures; whereas if $I$ is a normal ideal on a regular cardinal $\kappa$ and the boolean algebra $P(\kappa)/I$ has a nontrivial automorphism (hence there are at least two generic filters in $V^{P(\kappa)/I}$ for $P(\kappa)/I$), then $P(\kappa)/I$ must have $2^\kappa$ nontrivial automorphism. See Section \ref{section_questions} for more details and an open question.

We prove Theorem \ref{theorem_rigid_saturated} and Theorem \ref{theorem_rigid_presaturated} below with the proof of Theorem \ref{theorem_rigid_presaturated} coming before that of Theorem \ref{theorem_rigid_saturated}, because the forcing construction for Theorem \ref{theorem_rigid_saturated} is more technical. Our proof of Theorem \ref{theorem_rigid_saturated} will be split into two cases: first we prove Theorem \ref{theorem_rigid_saturated} for $\mu$ not the successor of a singular cardinal (see Theorem \ref{theorem_rigid_saturated_mu_not_succ_sing}), then we prove the remaining case in Section \ref{section_singular}.

We also show that a measurable cardinal will suffice to obtain a model with a \emph{precipitous} rigid ideal on $\mu^+$ where $\mu$ is a regular uncountable cardinal.

\begin{theorem}\label{theorem_rigid_precipitous}
Suppose $\kappa$ is a measurable cardinal and $\mu<\kappa$ is an uncountable regular cardinal. Then there is a forcing extension in which there is a rigid precipitous ideal on $\mu^+$ and $\GCH$ holds.
\end{theorem}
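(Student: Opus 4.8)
The plan is to force over a model with a measurable cardinal $\kappa$, using a Silver-style iteration to make $\kappa=\mu^+$ while preserving a precipitous ideal and $\GCH$. First I would fix a normal measure $U$ on $\kappa$ with ultrapower embedding $j\colon V\to M$, $\crit(j)=\kappa$, and ${}^\kappa M\subseteq M$. The idea is to perform a reverse Easton iteration $\P_\kappa$ of length $\kappa$ that, at the critical stage, collapses $\mu^+$ down so that $\kappa$ becomes $\mu^+$ in the extension. Concretely, let $\P_\kappa$ be the $\Col(\mu,{<}\kappa)$-style iteration (for $\mu$ uncountable and regular, with appropriate support) together with whatever initial-segment bookkeeping is needed so that $\P_\kappa$ is ${<}\mu$-closed and $\kappa$-c.c., hence preserves $\GCH$ below $\kappa$ and forces $2^{<\kappa}=\kappa$; one then continues with $\Col(\mu,\kappa)$-type forcing at stage $\kappa$ only as part of defining the generic ideal. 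The standard lifting argument shows $j$ extends to $j\colon V[G]\to M[G*g*H]$ for a suitably generic $g$ over the tail forcing $j(\P_\kappa)/\P_\kappa*\Col(\mu,\kappa)$, which by the closure of $M$ and a master-condition / term-forcing argument can be built in $V[G*h]$, where $h$ is $\Col(\mu,\kappa)$-generic.

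Next I would define the ideal. Let $I$ be the ideal on $\kappa$ (which is $\mu^+$ in $V[G*h]$) generated by the dual of the $V[G*h]$-ultrafilter derived from the extended embedding: $A\in I^+$ iff $[\id]_{j}\in j(A)$, equivalently iff there is a condition forcing $\kappa\in j(A)$. By the general theory (Foreman, Jech–Magidor–Mitchell–Prikry), since $\kappa$ is measurable in $V$ and $j$ lifts through the ${<}\kappa$-c.c.\ part and we then quotient by a further small collapse, the derived ideal $I$ in $V[G*h]$ is precipitous: a generic for $\mathcal P(\kappa)/I$ yields a well-founded generic ultrapower, because any descending sequence of conditions can be reflected back through $j$ using the fact that $j"\kappa$ is (essentially) represented and the tail forcing adds the needed functions. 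This is the same mechanism by which "saturated from huge" degrades to "precipitous from measurable", and I would cite the standard presentation in Foreman's handbook chapter.

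The rigidity is where the real work lies, and I would obtain it by coding, exactly as flagged in the introduction's discussion of Theorems \ref{theorem_rigid_saturated} and \ref{theorem_rigid_presaturated}, but in the cheaper precipitous setting. The point is that a generic $K\subseteq\mathcal P(\kappa)/I$ over $V[G*h]$ gives a generic ultrapower $j_K\colon V[G*h]\to N$ with $N$ transitive; if $\sigma$ is a nontrivial automorphism of $\mathcal P(\kappa)/I$ in $V[G*h]$, then $\sigma$ moves $K$ to another generic $K'=\sigma"K$ giving $j_{K'}\colon V[G*h]\to N'$, and one arranges the collapsing iteration $\P_\kappa$ (and the stage-$\kappa$ collapse) to additionally code, into the generic object at each relevant stage below $\kappa$, enough information — e.g.\ a predicate on $\mu^+$ computed from the $I$-measure-one sets — that $N$ can uniquely reconstruct $K$ from $N$'s own GCH-pattern or from a $\Delta$-system of coded reals, forcing $K'=K$ and hence $\sigma=\id$ on a dense set, i.e.\ $\sigma$ trivial. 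The Friedman–Magidor coding is what makes this compatible with $\GCH$: it writes the code into the pattern of which subsets of successive intervals are collapsed rather than into cardinal arithmetic, so no cardinals are blown up. The main obstacle is precisely verifying that this coding survives the generic quotient $\mathcal P(\kappa)/I$ — that the generic ultrapower $N$ really does "see" the code and that the code determines $K$ — while simultaneously not destroying precipitousness of $I$; managing the interaction between the lifting of $j$, the closure needed for precipitousness, and the ${<}\mu$-distributivity claim of the theorem will require care in choosing supports and in the order of the iteration. Once rigidity, precipitousness, and $\GCH$ are each in hand in $V[G*h]$, and since $\P_\kappa * \Col(\mu,\kappa)$ can be arranged to be ${<}\mu$-distributive over $V$, the theorem follows.
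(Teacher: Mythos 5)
There is a genuine gap, and you've actually flagged it yourself without resolving it: you write that ``the main obstacle is precisely verifying that this coding survives the generic quotient \ldots\ that the generic ultrapower $N$ really does `see' the code,'' but your proposal stops there. This is not a side issue to be ``managed by care in choosing supports''---it is the entire technical content of the theorem in the merely-precipitous setting. When $I$ is precipitous but not presaturated, the generic ultrapower $N = V[G*h]^\kappa/K$ is \emph{not} closed under $\mu$-sequences from $V[G*h][K]$, so the arguments of Lemma~\ref{satlem} (that the coding forcing preserves/destroys the stationarity of the appropriate sets) cannot simply be run inside $N$ and then transferred out. You need a reason why $N$'s notion of ``stationary coding set'' agrees with the outer model's, and nothing in your sketch supplies one.

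The paper's mechanism for overcoming this is to fix a $\blacklozenge_\mu$-sequence in the ground model $V$ and use it (rather than freshly enumerated stationary sets) to manufacture the almost-disjoint family of fat stationary sets on $\mu$ underlying the coding poset $\mathbb C$. Because $\Col(\mu,{<}\kappa)$ and, via the Duality Theorem, $\mathcal P(\kappa)/I \cong \Col(\mu,{<}j(\kappa))$ are ${<}\mu$-closed, the $\blacklozenge_\mu$-sequence survives into both $V[G]$ and $N$, so that $j(\mathbb C)$ as computed in $N$ is \emph{literally the same poset} as $j(\mathbb C)$ computed in $V[G]$. Stationarity of the coding sets is then absolute between $N$ and $V[G]$, which is what makes the unique-generics argument work despite $N$'s lack of closure. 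Your proposal, which speaks vaguely of coding ``into the generic object at each relevant stage below $\kappa$'' and hoping $N$ reconstructs $K$ ``from a $\Delta$-system of coded reals,'' does not produce such an absoluteness. Separately, your proposed forcing is unnecessarily heavy: a reverse Easton iteration with bookkeeping followed by an extra $\Col(\mu,\kappa)$ is not needed; the paper forces only with $\Col(\mu,{<}\kappa)$ (after ensuring $\blacklozenge_\mu$ in $V$, e.g.\ via $\Add(\mu)$, which preserves measurability) and then with $\mathbb C$, and takes $I$ to be the ideal generated by the dual of the normal measure $U$, with precipitousness and the Duality isomorphism coming for free.
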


A similar result is not possible for $\omega_1$.  The existence of a presaturated ideal on $\omega_1$ is known to be equiconsistent with a Woodin cardinal~\cite{Woodin}.  If $\GCH$ holds and $I$ is a precipitous but not presaturated ideal on $\omega_1$, then $\mathcal P(\omega_1)/I$ is forcing-equilvalent to $\Col(\omega,\omega_2)$, which never has unique generics.

\section{Preliminaries}\label{section_preliminaries}

Let us review some absorption properties of collapse forcings. Let $\mu$ be a regular cardinal. If $\P$ is a  ${<}\mu$-closed separative forcing, then for sufficiently large $\kappa$ it follows that there is a regular embedding $\P\to\Col(\mu,\kappa)$ and we say that $\Col(\mu,\kappa)$ absorbs $\P$. If $\kappa>\mu$ is an inaccessible cardinal and $\P$ is any ${<}\mu$-closed separative forcing with $|\P|<\kappa$ then there is a regular embedding $\P\to \Col(\mu,{<}\kappa)$. See \cite[Section 14]{MR2768691} for more details.

In order to force the existence of a saturated ideal on $\omega_1$ starting with a model containing a huge cardinal $\kappa$, Kunen \cite{MR495118} defined a forcing iteration $\mathbb{K}$ of length $\kappa$ which is $\kappa$-c.c.\ and highly universal in the sense that many posets regularly embed into $\mathbb{K}$, including many posets of size $\kappa$. We refer the reader to \cite{CoxKunenCollapse} for additional background on universal collapsing forcing. Let us review the definition of a slight variation of Kunen's universal collapse which was used by Magidor (see \cite{MR2768692}), as well as some of its properties that will be relevant for our proofs of Theorem \ref{theorem_rigid_saturated} and Theorem \ref{theorem_rigid_presaturated}. 

\begin{definition}\label{definition_kunen_iteration}
Suppose $\mu<\kappa$ are regular cardinals. Let $\P=\P_\kappa$ be a ${<}\mu$-support iteration $\<(\P_\alpha,\dot{\Q}_\beta)\st\alpha\leq\kappa, \beta<\kappa\>$ such that 
\begin{enumerate}
\item $\P_0=\Col(\mu,{<}\kappa)$
\item If $\P_\alpha\cap V_\alpha$ is a regular suborder of $\P_\alpha$ and $\P_\alpha\cap V_\alpha$ is $\alpha$-c.c., we say that $\alpha$ is an \emph{active} stage in the iteration, and let $\dot{\Q}_\alpha$ be a $\P_\alpha\cap V_\alpha$-name for $\Col(\alpha,{<}\kappa)^{V^{\P_\alpha\cap V_\alpha}}$. \footnote{In Kunen's original definition, the Silver collapse is used at such stages $\alpha$ so that certain master conditions exist.}
\end{enumerate}
\end{definition}
In the proofs of Theorem \ref{theorem_rigid_saturated} and Theorem \ref{theorem_rigid_presaturated} below we will need the following properties of this iteration.

\begin{lemma}
Suppose $\kappa$ is almost-huge and $\P=\P_\kappa$ is the iteration defined above. The following properties hold.
\begin{enumerate}
\item $\P_\kappa$ is ${<}\mu$-distributive and forces $\kappa=\mu^+$;
\item $\P_\kappa\subseteq V_\kappa$;
\item $\P_\kappa$ is $\kappa$-c.c.;
\item for each inaccessible $\gamma<\kappa$ there is a regular embedding $e_{\gamma,\kappa}:\P_\gamma*\Col(\gamma,<\kappa)\to \P_\kappa$ and 
\item \label{absorb_collapse} whenever $G*H$ is generic for $\P_\kappa*\Col(\kappa,{<}\lambda)$ over $V$, there is a regular embedding $e:\Col(\mu,\kappa)\to\P_\lambda/(G*H)$.
\end{enumerate} 
\end{lemma}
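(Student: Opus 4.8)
The plan is to verify the five properties more or less in the order stated, since each leans on the previous ones. For (1), I would note that $\P_0 = \Col(\mu, {<}\kappa)$ is ${<}\mu$-closed and, since every iterand $\dot\Q_\alpha$ is forced to be ${<}\mu$-closed (indeed ${<}\alpha$-closed with $\alpha \geq \mu$), a standard argument for ${<}\mu$-support iterations of ${<}\mu$-closed forcings shows $\P_\kappa$ is ${<}\mu$-closed, hence ${<}\mu$-distributive. That $\P_\kappa$ forces $\kappa = \mu^+$ follows from $\Col(\mu,{<}\kappa)$ collapsing everything below $\kappa$ to $\mu$ together with (3): the $\kappa$-c.c.\ preserves $\kappa$ as a cardinal. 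Property (2) is an easy induction: each $\dot\Q_\alpha$ is (a name for) a subset of $V_\kappa$ (a collapse $\Col(\alpha,{<}\kappa)$ computed in $V^{\P_\alpha \cap V_\alpha}$, all of whose conditions live below $\kappa$), and the ${<}\mu$-support bookkeeping keeps conditions in $V_\kappa$ at limit stages since $\kappa$ is inaccessible.

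For (3), the $\kappa$-c.c., I would run the usual $\Delta$-system argument against the ${<}\mu$-support iteration, using that $\kappa$ is inaccessible (so $\kappa$ is regular and $\gamma^{<\mu} < \kappa$ for $\gamma < \kappa$) and that the "active stage" clause is designed exactly so the iteration is self-similar: for a suitable club of $\alpha < \kappa$, $\P_\alpha = \P_\kappa \cap V_\alpha$ is a regular $\alpha$-c.c.\ suborder, so $\alpha$ is active and $\dot\Q_\alpha$ is the collapse of the interval $[\alpha,\kappa)$ to $\alpha$. This is the one place where the reflection built into Definition \ref{definition_kunen_iteration} is doing real work, and it is the step I expect to be the main obstacle to write carefully — this is exactly Kunen's original argument (with the Silver collapse replaced by $\Col$), so I would cite \cite{MR495118} or \cite{CoxKunenCollapse} for the details and only sketch the $\Delta$-system/self-similarity point. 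Property (4) is then immediate from the self-similarity: for inaccessible $\gamma < \kappa$ we have $\P_\gamma = \P_\kappa \cap V_\gamma$ a regular suborder which is $\gamma$-c.c., so $\gamma$ is an active stage, $\dot\Q_\gamma$ is (a name for) $\Col(\gamma,{<}\kappa)$, and the map $e_{\gamma,\kappa}$ sending $(p, \dot q)$ to the condition in $\P_\kappa$ with stem $p$ and value $\dot q$ at coordinate $\gamma$ (and trivial beyond) is a regular embedding of $\P_\gamma * \Col(\gamma,{<}\kappa)$ into $\P_\kappa$.

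The substantive clause is (5). Here I would apply $j : V \to M$ with $\crit(j) = \kappa$, $j(\kappa) = \lambda$, $M^\lambda \subseteq M$ to the iteration: $j(\P_\kappa) = \P_\lambda^M$ is the corresponding $\lambda$-length iteration of $M$, and since $\P_\kappa \subseteq V_\kappa$ and $V_{\kappa+1} \subseteq M$ we get $\P_\kappa = \P_\lambda^M \restriction \kappa = \P_\kappa^M$, with $\kappa$ an inaccessible stage in the run of $j(\P_\kappa)$. By elementarity $\kappa$ is an active stage of $j(\P_\kappa)$ (since $\P_\kappa = j(\P_\kappa) \cap V_\kappa$ is a regular $\kappa$-c.c.\ suborder), so the iterand of $j(\P_\kappa)$ at stage $\kappa$ is $\Col(\kappa, {<}\lambda)$ as computed in $M^{\P_\kappa}$ — which, by $M^\lambda \subseteq M$ and the $\kappa$-c.c., agrees with $\Col(\kappa,{<}\lambda)^{V^{\P_\kappa}}$. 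Thus $\P_\kappa * \Col(\kappa,{<}\lambda)$ is an initial segment of $j(\P_\kappa) = \P_\lambda$, i.e.\ $\P_\lambda \cong (\P_\kappa * \Col(\kappa,{<}\lambda)) * \P_{\mathrm{tail}}$, and after forcing with $G * H$ the tail $\P_\lambda/(G*H)$ is ${<}\kappa$-closed in $V[G*H]$ and has size $\lambda$ there, with $\lambda$ inaccessible in $V$ and still of size at most $2^\kappa$-manageable after the $\kappa$-c.c.\ $\times$ $\kappa$-sized collapse. Since any ${<}\kappa$-closed (equivalently ${<}\mu$-closed, as $\kappa = \mu^+$ in $V[G*H]$) separative forcing of size $\lambda$, with $\lambda$ appropriately large, regularly embeds into $\Col(\mu,\kappa)$ — wait, this is the wrong direction; what I actually want is the reverse absorption. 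Let me restate: I will instead use that $\P_\lambda/(G*H)$ contains a dense ${<}\mu$-closed suborder which, by the standard absorption facts recalled in Section \ref{section_preliminaries} (using that $\Col(\mu,\kappa)$ absorbs any ${<}\mu$-closed separative poset once we are above the relevant size bound, and that $\P_\lambda/(G*H)$ collapses $\lambda$ to $\kappa = \mu^+$), admits a regular embedding $e : \Col(\mu,\kappa) \to \P_\lambda/(G*H)$. The point I expect to need care with is checking that the quotient is genuinely ${<}\mu$-closed in $V[G*H]$ and that its size and collapsing behavior match $\Col(\mu,\kappa)$ closely enough to invoke absorption in that direction; I would handle this by exhibiting the dense subset of $\P_\lambda/(G*H)$ consisting of conditions that are "decided below $\kappa$" and are genuine elements of $V$, which is ${<}\kappa$-closed, and then matching it against $\Col(\mu,\kappa)$ via the universality of the latter.
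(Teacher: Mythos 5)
The paper actually states this lemma without proof (these are presented as known facts about the Kunen/Magidor universal collapse), so there is no in-paper argument to compare against; I can only assess your argument on its own terms. Your handling of (1)--(3) is standard and fine, and the decomposition you set up for (5) --- $\kappa$ is active in $j(\P_\kappa)=\P_\lambda$, so $\P_\lambda \cong \P_\kappa * \Col(\kappa,{<}\lambda) * \P_{\tail}$ --- is exactly the right frame.

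The gap is in the final step of (5), and you flagged it yourself mid-sentence but then did not actually repair it. The absorption fact in Section~\ref{section_preliminaries} goes in one direction only: any ${<}\mu$-closed separative poset of suitable size regularly embeds \emph{into} $\Col(\mu,\kappa)$. It does not say that $\Col(\mu,\kappa)$ regularly embeds into every ${<}\mu$-closed separative poset that collapses enough cardinals, and indeed that is false: $\Col(\kappa,\lambda)$ is ${<}\mu$-closed, separative, and collapses $\lambda$ to $\kappa=\mu^+$, yet $\Col(\mu,\kappa)$ cannot regularly embed into it, since $\Col(\kappa,\lambda)$ is ${<}\kappa$-closed and hence adds no new subsets of $\mu$. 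So appealing to ``absorption in the reverse direction,'' or to a dense ${<}\kappa$-closed suborder of $\P_\lambda/(G*H)$, does not by itself get you a copy of $\Col(\mu,\kappa)$ inside the quotient --- a priori the quotient could simply be too closed.

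What actually saves (5) is a structural feature of $j(\P_\kappa)$ that you never invoke: its stage-$0$ coordinate is $\Col(\mu,{<}\lambda) \cong \Col(\mu,{<}\kappa)\times\Col(\mu,[\kappa,\lambda))$, and $G$ only pins down the $\Col(\mu,{<}\kappa)$ factor (while $H$ lives at coordinate $\kappa$ and does not touch stage $0$ at all). Thus $\Col(\mu,[\kappa,\lambda))$, mapped in as the residual part of coordinate $0$ with all other coordinates trivial, is a regular suborder of $\P_\lambda/(G*H)$; and $\Col(\mu,\kappa) \cong \Col(\mu,\{\kappa\})$ is a product factor of $\Col(\mu,[\kappa,\lambda))$, hence regularly embeds into it. Composing gives $e:\Col(\mu,\kappa)\to\P_\lambda/(G*H)$. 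This is the step your sketch is missing.

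A secondary point: in (4) you assert that \emph{every} inaccessible $\gamma<\kappa$ is an active stage, citing only that $\P_\gamma\subseteq V_\gamma$. But activity also requires $\P_\gamma$ to be $\gamma$-c.c., and (3) only gives $\kappa$-c.c.; the $\gamma$-c.c.\ of the initial segment at an arbitrary inaccessible $\gamma$ is not automatic in Kunen-style iterations and requires some reflection/layering argument of the kind isolated in \cite{CoxKunenCollapse}. You should either supply that argument or restrict to the $\gamma$ where it holds.
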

In the proof of Theorem \ref{theorem_rigid_saturated} below we will use a different variation of Kunen's universal collapse. The fact that the chain condition holds for this variation will follow from a result of Cox.
\begin{theorem}[\cite{CoxKunenCollapse}, Theorem 39]\label{theorem_cox_kunen_collapse}
Suppose $\kappa$ is weakly compact and $\<(\P_\alpha,\dot{\Q}_\alpha)\st \alpha\leq\kappa,\beta<\kappa\>$ is a ``Kunen-style'' universal iteration (see \cite[Definition 34]{CoxKunenCollapse}). Suppose
\begin{enumerate}
\item direct limits are taken at all inaccessible $\gamma\leq\kappa$,
\item for every active $\alpha<\kappa$ we have $\forced_{V_\alpha\cap \P_\alpha}\dot{\Q}_\alpha\subset V_\kappa[\dot{g}_\alpha]$ and
\item each $\dot{\Q}_\alpha$ is forced by $V_\alpha\cap\P_\alpha$ to be $\kappa$-c.c.
\end{enumerate}
Then $\P_\kappa\subseteq V_\kappa$ is ``layered'' on some stationary subset of 
\[\Gamma:=\{W\in P_\kappa(V_\kappa)\st \text{$W=V_\gamma$ for some inaccessible $\gamma<\kappa$}\}.\]
In particular, $\P_\kappa$ is $\kappa$-Knaster.
\end{theorem}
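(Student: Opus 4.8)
The plan is to obtain the ``layering'' by a single reflection off the weak compactness of $\kappa$, and then to read off $\kappa$-Knaster from it by an elementary argument. First I would reduce the theorem to showing that
\[
S=\{\gamma<\kappa : \gamma\text{ is inaccessible and }\P_\kappa\cap V_\gamma\text{ is a regular suborder of }\P_\kappa\}
\]
is stationary in $\kappa$: since $\kappa$ is weakly compact, hence Mahlo, each $\gamma\in S$ gives $V_\gamma\in\Gamma$ with $|\P_\kappa\cap V_\gamma|<\kappa$, and $\{V_\gamma:\gamma\in S\}$ is then a stationary subset of $P_\kappa(V_\kappa)$ contained in $\Gamma$, which is exactly the asserted conclusion. (It is hypotheses (2) and (3) that make $\P_\kappa\subseteq V_\kappa$, so that $\P_\kappa\cap V_\gamma$ is the right object to consider.) So I fix an arbitrary club $D\subseteq\kappa$ and look for $\gamma\in D\cap S$.

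Next I would set up the reflection. Fix a large regular $\theta$ and take $M$ to be a $\kappa$-model (transitive, of size $\kappa$, closed under ${<}\kappa$-sequences) obtained as the transitive collapse of an elementary submodel of $H_\theta$ containing --- and hence, after collapsing, fixing --- $\kappa$, $V_\kappa$, $\P_\kappa$, $D$, and the data of the iteration; then these are computed correctly in $M$, and statements about them verified in $M$ hold in $V$. By weak compactness there is an elementary $j:M\to N$ with $N$ transitive, closed under ${<}\kappa$-sequences, and $\crit(j)=\kappa$. Then $(V_\kappa)^N=V_\kappa$; $\kappa$ is inaccessible in $N$ (it is genuinely inaccessible and $V_\kappa\subseteq N$); $\kappa\in j(D)$, because $j(D)\cap\kappa=D$ is unbounded in $\kappa$ and $j(D)$ is closed; and in $N$, $j(\P_\kappa)$ is the Kunen-style iteration of length $j(\kappa)$, again taking direct limits at all inaccessibles ${\le}\,j(\kappa)$, in particular at $\kappa$.

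The heart of the matter --- and what I expect to be the main obstacle --- is the claim that, in $N$, the ``$V_\kappa$-part'' $j(\P_\kappa)\cap V_\kappa$ is a regular suborder of $j(\P_\kappa)$; in fact $j(\P_\kappa)\cap V_\kappa=\P_\kappa$. This is where all three hypotheses and the ``Kunen-style'' format of \cite[Definition 34]{CoxKunenCollapse} are genuinely used. Since $\crit(j)=\kappa$ and the active-stage test at a stage $\alpha<\kappa$ depends only on $V_{\alpha+1}$, the iterations $\P_\kappa$ and $j(\P_\kappa)$ have the same active stages below $\kappa$; by the uniform description of the iterands, $\dot\Q_\alpha$ sits inside $j(\dot\Q_\alpha)$ as a regular suborder at each such stage, while by hypothesis (2) the iterands are small enough that cutting $j(\dot\Q_\alpha)$ down to $V_\kappa$ returns exactly $\dot\Q_\alpha$; hypothesis (3) --- the iterands are $\kappa$-c.c. --- together with (2) gives $\P_\kappa\subseteq V_\kappa$ and forces the stagewise computation of $j(\P_\kappa)\cap V_\kappa$ to return precisely the length-$\kappa$ iteration with iterands $\dot\Q_\alpha$; and by hypothesis (1) the direct limit at $\kappa$ (inaccessible both in $N$ and in $V$) makes this length-$\kappa$ object equal to $\varinjlim_{\alpha<\kappa}\P_\alpha=\P_\kappa$. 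Finally the stagewise regular-suborder inclusions cohere through the direct limits into the inclusion $\P_\kappa\subseteq j(\P_\kappa)$, which is therefore a regular embedding; so $j(\P_\kappa)\cap V_\kappa\ (=\P_\kappa)$ is a regular suborder of $j(\P_\kappa)$, of size ${<}\,j(\kappa)$.

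Consequently $N$ satisfies ``there is an inaccessible $\gamma\in j(D)$ with $j(\P_\kappa)\cap V_\gamma$ a regular suborder of $j(\P_\kappa)$ of size ${<}\,j(\kappa)$'', witnessed by $\kappa$; by elementarity of $j$ this reflects to the corresponding statement about $\P_\kappa$ and $D$ in $M$, hence in $V$, producing $\gamma\in D\cap S$. As $D$ was an arbitrary club, $S$ is stationary, which is the layering. For the ``in particular'', I would transfer the layering to the statement that $\{R\in P_\kappa(\P_\kappa):R\lessdot\P_\kappa\}$ is stationary in $P_\kappa(\P_\kappa)$ and then prove $\kappa$-Knaster directly: given $\langle p_i:i<\kappa\rangle$, filter $\P_\kappa$ as a continuous union of sets $R_\xi$ of size ${<}\kappa$ with $\{p_i:i<\xi\}\subseteq R_\xi$; on a stationary set of $\xi$ pick $R_\xi\lessdot\P_\kappa$ and the $R_\xi$-projection $\bar p_\xi$ of $p_\xi$ (in the Boolean completion); press down to freeze $\bar p_\xi$ to a single value $\bar p$ on some $A\in[\kappa]^\kappa$; and observe that for $\xi<\xi'$ in $A$ we have $p_\xi\in R_{\xi'}$, so the $R_{\xi'}$-projection of $p_\xi$ is $p_\xi$ itself, which is $\le\bar p$ (the coarser $R_\xi$-projection of $p_\xi$); since $\bar p$ is also the $R_{\xi'}$-projection of $p_{\xi'}$, this makes $p_\xi$ and $p_{\xi'}$ compatible, so $\{p_i:i\in A\}$ is linked.
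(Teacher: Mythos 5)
The paper does not prove this statement: it is imported verbatim from Cox's \emph{Layered posets and Kunen's universal collapse} (Theorem 39 there), so there is no in-paper proof to compare against. What I can do is assess your sketch on its own merits.

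Your overall strategy --- reduce layeredness to stationarity in $\kappa$ of $\{\gamma \text{ inaccessible} : \P_\kappa\cap V_\gamma\lhd\P_\kappa\}$, reflect via an elementary $j:M\to N$ of transitive $\kappa$-sized models with $\crit j=\kappa$, observe that $\kappa$ itself witnesses the target statement in $N$, and pull back by elementarity --- is the natural route for such a weak-compactness result and is almost certainly how Cox argues. Your translation between stationarity in $\kappa$ and stationarity in $P_\kappa(V_\kappa)$ on $\Gamma$ is fine for inaccessible $\kappa$. Your derivation of $\kappa$-Knaster from stationary layering (continuous filtration $\langle R_\xi\rangle$, projections $\bar p_\xi=\pi_{R_\xi}(p_\xi)$, Fodor to stabilize $\bar p_\xi=\bar p$ on a stationary $A$, then $p_\xi\le\bar p=\pi_{R_{\xi'}}(p_{\xi'})$ for $\xi<\xi'$ in $A$, using $p_\xi\in R_{\xi'}$ and $R_\xi\lhd R_{\xi'}\lhd\P_\kappa$) is also correct and is the standard argument that stationarily layered posets are $\kappa$-Knaster.

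Where I would push back is the paragraph you yourself label the ``heart of the matter.'' You need, in $N$, that $j(\P_\kappa)\cap V_\kappa=\P_\kappa$ and that this is a regular suborder of $j(\P_\kappa)$. Your justification compresses three nontrivial things: (a) the active-stage verdicts at $\alpha<\kappa$ agree between $\P_\kappa$ and $j(\P_\kappa)$; (b) cutting $j(\dot\Q_\alpha)$ down to $V_\kappa$ returns $\dot\Q_\alpha$; (c) these cohere through the direct limit at $\kappa$ into a regular embedding $\P_\kappa\hookrightarrow j(\P_\kappa)$. Note that the active-stage test at $\alpha$ is \emph{not} local to $V_{\alpha+1}$: it asks whether $\P_\alpha\cap V_\alpha$ is a regular suborder of the whole $\P_\alpha$, which is typically a $\kappa$-sized object, so your claim that it ``depends only on $V_{\alpha+1}$'' is wrong as stated. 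The honest version of (a)--(c) is a simultaneous induction carrying both ``$j(\P_\alpha)\cap V_\kappa=\P_\alpha$'' and ``$\P_\alpha\lhd j(\P_\alpha)$,'' and whether that induction closes is exactly what Cox's Definition 34 (the name-bounding discipline behind hypothesis (2), the handling of non-active stages, the support conventions) is designed to guarantee. Without that definition in hand you can only assert the step, not verify it; your sketch correctly identifies the pressure point but does not discharge it, and that discharge is essentially the entire content of the cited theorem.
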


Generic large cardinal properties have been extensively studied \cite{MR2768692}, and have many applications in the form of consistency results at successor cardinals. Suppose $j:V\to M\subseteq V[G]$ is a generic elementary embedding with critical point $\kappa$, where $G$ is generic over $V$ for a forcing $\P$. One fundamental feature of many applications of generic embeddings is that, in certain situations, the forcing $\P$ which adds the embedding is forcing equivalent to $\mathcal{P}(\kappa)/I$ for a particular naturally defined ideal $I\in V$.  Several notions about these kinds of ideals are:

\begin{definition}
If $\kappa$ is a regular cardinal, and $I$ is a $\kappa$-complete ideal on $\kappa$, then we say:
\begin{enumerate}
\item $I$ is precipitous if whenever $G \subseteq \mathcal P(\kappa)/I$ is generic over $V$, then $V^\kappa/G$ is well-founded.
\item $I$ is saturated if $\mathcal P(\kappa)/I$ has the $\kappa^+$-c.c.
\item $I$ is presaturated if forcing with $\mathcal P(\kappa)/I$ preserves $\kappa^+$.
\end{enumerate}
\end{definition}

\begin{fact}[See \cite{MR2768692}]
If $I$ is a $\kappa$-complete presaturated ideal on $\kappa$ and $2^\kappa = \kappa^+$, then
\begin{enumerate}
\item $I$ is precipitous.
\item If $G \subseteq \mathcal P(\kappa)/I$ is generic $M \cong V^\kappa/G$ is transitive, then $M^\kappa \cap V[G] \subseteq M$.
\end{enumerate}
\end{fact}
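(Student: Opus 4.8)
The plan is to prove (1) first and then (2), letting the combinatorial reformulation of presaturation do all the real work. I would first record (citing \cite{MR2768692}) that, since $2^\kappa=\kappa^+$ bounds $|\mathcal P(\kappa)/I|$ by $\kappa^+$, presaturation of $I$ yields the following density property: for every $I$-positive $A$ and every sequence $\langle\mathcal A_\xi:\xi<\kappa\rangle$ of maximal antichains below $A$, the set of $I$-positive $q\le A$ compatible with at most $\kappa$ members of each $\mathcal A_\xi$ is dense below $A$.

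For (1), I would argue by contradiction. A failure of precipitousness gives an $I$-positive $A$ forcing that the generic ultrapower is ill-founded, which I would unpack as a name $\langle\dot f_n:n<\omega\rangle$ for a sequence of ground-model functions into a fixed ordinal $\delta$ with $A\Vdash[\dot f_{n+1}]<[\dot f_n]$ for all $n$. Repeatedly refining maximal antichains that decide the $\dot f_n$, I would build in $V$ a height-$\omega$ tree whose $n$th level $\mathcal A_n$ is a maximal antichain below $A$ refined by $\mathcal A_{n+1}$, carrying ordinal labels $g_a\colon\kappa\to\delta$ with $a\Vdash\dot f_n=[\check g_a]$ and with $g_a<g_b$ holding $I$-almost everywhere on $a$ whenever $a$ refines $b$. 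The density property then supplies an $I$-positive $S\le A$ meeting each level in at most $\kappa$ pieces; restricting to $S$, disjointifying the levels, and shrinking nodes so the label inequalities hold everywhere (all legitimate by $\kappa$-completeness, since only ${<}\kappa$-many ideal sets need be discarded), I obtain a genuinely refining tree of pairwise disjoint $I$-positive subsets of $S$ whose $n$th level covers $S$ mod $I$ for every $n$. Since $\kappa$ is uncountable and $I$ is $\kappa$-complete, the intersection over $n$ of the $n$th level's union equals $S$ mod $I$, hence is nonempty; any point $\alpha^*$ in it lies in a unique node of each level, those nodes form a branch, and the labels $g(\alpha^*)$ strictly decrease along it — an impossible infinite descending sequence of ordinals.

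For (2), fix a generic $G$, let $M=\Ult(V,G)$ (transitive by (1)) with $j\colon V\to M$, and let $\vec x=\langle x_\xi:\xi<\kappa\rangle\in V[G]$ have all $x_\xi\in M$, fixing a name $\dot{\vec x}$ and a $p_0\in G$ forcing this. Every element of $M$ has a ground-model representative, so in $V$ I would pick for each $\xi$ a maximal antichain $\mathcal A_\xi$ below $p_0$ and functions $g_{\xi,a}\in V$ $(a\in\mathcal A_\xi)$ with $a\Vdash\dot x_\xi=[\check g_{\xi,a}]$. Applying the density property below $p_0\in G$, I may fix $q\in G$ with $q\le p_0$ such that $\mathcal B_\xi:=\{a\in\mathcal A_\xi:a\text{ compatible with }q\}$ has size at most $\kappa$ for each $\xi$. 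Using a single well-ordering of $\mathcal P(\kappa)/I$ fixed in $V$, I would uniformly enumerate each $\mathcal B_\xi$, disjointify the traces $\{a\wedge q:a\in\mathcal B_\xi\}$ into pairwise disjoint $I$-positive sets covering $q$ mod $I$, and define $F_\xi\in V$ to agree with $g_{\xi,a}$ on the piece coming from $a$; since $q\in G$ meets the maximal antichain $\{a\wedge q:a\in\mathcal B_\xi\}$ below $q$, one checks $[F_\xi]_G=x_\xi$. The crucial point is that the sequence $\langle F_\xi:\xi<\kappa\rangle$ is built entirely inside $V$ from $\langle\mathcal A_\xi\rangle$, $\langle g_{\xi,a}\rangle$, the well-ordering, and the single condition $q\in\mathcal P(\kappa)/I\subseteq V$, hence lies in $V$; so $F\in V$ defined by $F(\alpha)=\langle F_\xi(\alpha):\xi<\alpha\rangle$ satisfies $[F]_G=j(F)(\kappa)=\langle[F_\xi]_G:\xi<\kappa\rangle=\vec x$ (using $j(\xi)=\xi$ for $\xi<\kappa$), so $\vec x\in M$. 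As a sanity check, (1) also follows from (2): an $\in$-descending $\omega$-sequence in the ill-founded part of $M$ would, by the $\omega$-closure just proved, belong to $M$, contradicting that $M$ models Foundation.

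The step I expect to be the real obstacle is the combinatorial bookkeeping common to both parts. Maximal antichains in $\mathcal P(\kappa)/I$ are families of $I$-positive sets whose pairwise intersections only lie in $I$, so converting them — after trimming via presaturation to width $\le\kappa$ — into honestly disjoint, honestly refining systems (so that $G$ picks out an honest branch in (1) and an honest choice function in (2)) requires some care, and it is precisely here that $\kappa$-completeness (to absorb the ${<}\kappa$-many discarded ideal sets) and the bound $|\mathcal P(\kappa)/I|\le\kappa^+$ from $2^\kappa=\kappa^+$ (so the data stays set-sized and the trimming hypothesis applies) are used. Granting that, the remaining verifications are routine instances of the {\L}o\'s theorem for generic ultrapowers and of genericity.
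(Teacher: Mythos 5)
The paper states this as a Fact and refers to Foreman's Handbook chapter \cite{MR2768692} without supplying a proof, so there is no in-paper argument to compare against; I am therefore assessing your proposal on its own. Your argument is the standard one and is essentially correct: the antichain-trimming reformulation of presaturation under $2^\kappa=\kappa^+$ is exactly the right tool, and both the refining-tree argument for precipitousness and the single-$V$-function representation of a $\kappa$-sequence of $M$-elements for $\kappa$-closure go through. Two points in the write-up are imprecise and worth tightening. First, in (1), the parenthetical ``only ${<}\kappa$-many ideal sets need be discarded'' is not quite the reason each level of the disjointified tree still covers $S$ modulo $I$: each individual node sheds $<\kappa$ $I$-sets, but a level has $\kappa$-many nodes, so the total discarded can have size $\kappa$, and $\kappa$-completeness alone does not absorb it. What actually saves the covering is that after trimming the pieces at level $n$ remain a maximal antichain below $S$, and for \emph{any} maximal antichain $\{B_i\}$ below $S$ one has $S\setminus\bigcup_i B_i\in I$ (otherwise the difference would be a positive set incompatible with every $B_i$). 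Second, in (2) the chain of equalities $[F]_G=j(F)(\kappa)=\langle[F_\xi]_G:\xi<\kappa\rangle$ tacitly uses $[\id]_G=\kappa$, which requires normality; for a $\kappa$-complete ideal one only has $[\id]_G\geq\kappa$, and what one should say is that $[F]_G$ is a function in $M$ with domain $[\id]_G$ satisfying $[F]_G(\xi)=[F_\xi]_G$ for $\xi<\kappa$, so that $\vec x=[F]_G\restrict\kappa\in M$. Neither point threatens the conclusions, but both should be stated correctly.
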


Foreman showed that many of these applications involving the correspondence between forcings which add generic embeddings and naturally defined ideals in the ground model, can be unified, and viewed as easy consequences of a very general theorem he called the Duality Theorem. Here we state two weak versions of Foreman's Duality Theorem which we will use in our proofs of Theorem \ref{theorem_rigid_saturated}, Theorem \ref{theorem_rigid_presaturated} and Theorem \ref{theorem_rigid_precipitous}.

\begin{theorem}[Foreman, \cite{MR3038554}]\label{theorem_duality_theorem}
Suppose $Z$ is a set and $\P$ is a forcing such that whenever $G\subseteq\P$ is generic, there is an ultrafilter $U$ on $Z$ such that $V^Z/U$ is isomorphic to a transitive class $M$. Also assume that there are functions $f_\P$, $\<f_p\>_{p\in\P}$ and $g$ such that $\forced_\P$ ``$[f_\P]_U=\P$, $(\forall p\in\P)[f_p]_U=p$ and $[g]_U=\dot{G}$.'' If $I=\{X\subseteq Z\st 1 \forced_\P [\id]_U\notin j_U(X)\}$, then there is a dense embedding $d:\mathcal{P}(Z)/I\to\mathcal{B}(\P)$.
\end{theorem}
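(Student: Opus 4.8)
The plan is to write down the obvious candidate for $d$ and verify directly that it is an injective homomorphism of Boolean algebras with dense range. Fix once and for all a single $\P$-name $\dot U$ witnessing all of the hypotheses; since $V^Z/\dot U$ is forced to be well founded, identify it with its transitive collapse, so that the assertions ``$[f_p]_{\dot U}=\check p$'' and ``$[g]_{\dot U}=\dot G$'' are literally meaningful. Applying {\L}o\'s's theorem to the ultrapower map $j_{\dot U}\colon V\to V^Z/\dot U$ shows that, for any $X\subseteq Z$, the statement $[\id]_{\dot U}\in j_{\dot U}(\check X)$ is forced to be equivalent to $\check X\in\dot U$; hence $I=\{X\subseteq Z:1\forces_\P\check X\notin\dot U\}$, which is a proper ideal on $Z$ because $\dot U$ is forced to be an ultrafilter. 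The map I would use is given by
\[
d(X):=\bigl\|\,\check X\in\dot U\,\bigr\|_{\mathcal B(\P)}\qquad(X\subseteq Z),
\]
and the claim is that it descends to a dense embedding of $\mathcal P(Z)/I$ into $\mathcal B(\P)$.

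The first step is to check that $d\colon\mathcal P(Z)\to\mathcal B(\P)$ is a Boolean homomorphism. Since $\dot U$ is forced to be a filter, membership of $X\cap Y$ in $\dot U$ is forced to be equivalent to membership of both $X$ and $Y$, so $d(X\cap Y)=d(X)\wedge d(Y)$, and in particular $d$ is order preserving; trivially $d(\emptyset)=0$ and $d(Z)=1$. The one clause that uses more than filter-ness is complementation: since $\dot U$ is forced to be an \emph{ultra}filter, the assertions ``$\check X\in\dot U$'' and ``$\check Z\setminus\check X\in\dot U$'' are forced to be mutually exclusive and exhaustive, whence $d(X)$ and $d(Z\setminus X)$ are complementary in $\mathcal B(\P)$. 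Thus $d$ is a Boolean homomorphism, and its kernel is precisely $\{X:1\forces_\P\check X\notin\dot U\}=I$, so $d$ induces an injective homomorphism $\bar d\colon\mathcal P(Z)/I\to\mathcal B(\P)$.

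The heart of the theorem is the density of $\ran(\bar d)$, and this is where the families $\langle f_p\rangle_{p\in\P}$ and $g$ enter. Given $p\in\P$ with $p\neq 0$, set $X_p:=\{z\in Z:f_p(z)\in g(z)\}$, a genuine element of $\mathcal P(Z)$ in $V$. By {\L}o\'s's theorem, ``$\check X_p\in\dot U$'' is forced to be equivalent to ``$[f_p]_{\dot U}\in[g]_{\dot U}$'', which by the hypotheses on $f_p$ and $g$ is forced to be equivalent to ``$\check p\in\dot G$''; since the Boolean value $\|\check p\in\dot G\|_{\mathcal B(\P)}$ is the image of $p$, we conclude $\bar d([X_p]_I)=p$. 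Therefore $\ran(\bar d)$ contains (the image of) $\P$, which is dense in $\mathcal B(\P)$, so $\bar d$ is a dense embedding; being an injective homomorphism with dense range into a complete Boolean algebra it is automatically a complete embedding, and in particular $\P$ and $\mathcal P(Z)/I$ are forcing equivalent.

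I do not anticipate any serious obstacle here: the argument is driven entirely by the definition of $d$ and by {\L}o\'s's theorem, and only two hypotheses carry real weight --- the ultrafilter property of $\dot U$, used precisely in the complementation clause, and the existence of the families $\langle f_p\rangle_{p\in\P}$ and $g$, used precisely in the density clause. The points that require a little care are keeping the Boolean-value and {\L}o\'s-theorem translations synchronized, verifying that $I$ is a proper ideal, and the standard computation that $\|\check p\in\dot G\|_{\mathcal B(\P)}$ equals $p$; the function $f_\P$ is not needed for this weak form of the Duality Theorem.
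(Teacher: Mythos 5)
The paper does not prove this theorem itself; it is cited directly as a result of Foreman from \cite{MR3038554}, so there is no in-paper argument to compare against. Your proof is correct and is the standard argument for this weak form of the Duality Theorem: $d(X)=\bigl\|\check X\in\dot U\bigr\|_{\mathcal B(\P)}$ is a Boolean homomorphism whose kernel is exactly $I$ (by the ultrafilter properties of $\dot U$ together with {\L}o\'s's theorem), and for each $p$ the ground-model set $X_p=\{z\in Z:f_p(z)\in g(z)\}$ satisfies $d(X_p)=\|\check p\in\dot G\|$, which yields density. Your closing observation that the hypothesis $[f_\P]_{\dot U}=\P$ carries no weight here is also accurate; it matters for stronger forms of the Duality Theorem, and the paper does use it elsewhere when identifying the quotient algebra inside the generic ultrapower in the proof of Theorem~\ref{theorem_rigid_presaturated}, but it plays no role in the density argument you give.
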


\begin{theorem}[Foreman, \cite{MR3038554}]
\label{dualitynicecase}
Suppose $I$ is a precipitous $\kappa$-complete ideal on $Z$ and $\mathbb{P}$ is a $\kappa$-c.c. partial order.  If $\bar I$ denotes the ideal generated by $I$ in $V^{\mathbb P}$, then $\mathcal{B}( \mathbb{P} * \dot{\mathcal{P}} (Z)/ \bar I) \cong \mathcal{B}( \mathcal P(Z)/I * \dot{j(\mathbb{P})} )$.

\end{theorem}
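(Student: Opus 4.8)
The plan is to prove the two posets forcing equivalent by exhibiting, in a generic extension by either one, a reconstruction of a generic filter for the other which induces the same generic extension; this is a special case of Foreman's Duality Theorem and could also be extracted from Theorem \ref{theorem_duality_theorem} applied to $\mathcal{P}(Z)/I*\dot{j(\mathbb{P})}$, but it is cleanest to argue directly. Throughout write $j_H\colon V\to M$ for the generic ultrapower obtained by forcing with $\mathcal{P}(Z)/I$ (with $H$ the generic filter).

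First I would collect the preservation facts. Since $I$ is $\kappa$-complete and $\mathbb{P}$ is $\kappa$-c.c., one has $\bar I\cap V=I$, $\bar I$ is precipitous, and the map $e_0\colon[W]_I\mapsto[W]_{\bar I}$ (for $W\in I^+\cap V$) is a regular embedding of $\mathcal{P}(Z)/I$, as computed in $V$, into $\mathcal{P}(Z)/\bar I$, as computed in $V^{\mathbb{P}}$. The only nonroutine point is that a maximal antichain of $\mathcal{P}(Z)/I$ lying in $V$ stays predense, for which, given a name $\dot Y$ for a nonzero element of $\mathcal{P}(Z)/\bar I$, one uses the $\kappa$-c.c.\ and $\kappa$-completeness to produce in $V$ a condition $p\in\mathbb{P}$ and a set $W\in I^+$ with $p\Vdash\check W\subseteq\dot Y$ modulo $\bar I$, and then invokes maximality of the antichain in $V$. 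Consequently, if $G*\bar H$ is generic for $\mathbb{P}*\dot{\mathcal{P}}(Z)/\bar I$, then $H:=\bar H\cap V$ is $\mathcal{P}(Z)/I$-generic over $V$; since $\mathcal{P}(Z)/I\in V$ and (via $e_0$) $\bar H$ is obtained over $V[G]$ by forcing $\mathcal{P}(Z)/I$ followed by the quotient, the product lemma gives that $(G,H)$ is mutually generic over $V$, so in particular $G$ is $\mathbb{P}$-generic over $V[H]$ and $M:=\Ult(V,H)$ is well-founded, hence may be taken transitive.

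Next I would build the correspondence. In $V[G][\bar H]$, form the generic ultrapower $j_{\bar H}\colon V[G]\to\bar M$; as $\bar I$ is generated by $I$, $j_{\bar H}\restriction V=j_H$, $M\subseteq\bar M$, and the usual analysis gives $\bar M=M[G_M]$ with $G_M:=j_{\bar H}(G)$ a $j_H(\mathbb{P})$-generic over $M$. Using that $\mathbb{P}$ is $\kappa$-c.c.\ and $\crit(j_H)=\kappa$ — so $j_H\restriction\mathbb{P}$ is a regular embedding of $\mathbb{P}$ into $j_H(\mathbb{P})$, $j_H[G]\subseteq G_M$, and $G=\{p:j_H(p)\in G_M\}$ — together with the mutual genericity of $(G,H)$, one upgrades this to: $G_M$ is $j_H(\mathbb{P})$-generic over the full model $V[H]$, and $V[G][\bar H]=V[H][G_M]$. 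Thus $(G,\bar H)\mapsto(H,G_M)$ sends a generic for $\mathbb{P}*\dot{\mathcal{P}}(Z)/\bar I$ to a generic for $\mathcal{P}(Z)/I*\dot{j(\mathbb{P})}$ without changing the generic extension. Conversely, given $H*K$ generic for $\mathcal{P}(Z)/I*\dot{j(\mathbb{P})}$, form $j_H\colon V\to M$, put $G:=\{p\in\mathbb{P}:j_H(p)\in K\}$ — generic over $V$ because $V$-antichains of $\mathbb{P}$ have size $<\kappa=\crit(j_H)$ and are moved pointwise — lift $j_H$ to $\bar j\colon V[G]\to M[K]$, and set $\bar H:=\{X\in\mathcal{P}(Z)^{V[G]}:[\id]_H\in\bar j(X)\}$; one checks $\bar H$ is $\mathcal{P}(Z)/\bar I$-generic over $V[G]$ with $\bar H\cap V=H$, $j_{\bar H}=\bar j$, $G_M=K$, and $V[G][\bar H]=V[H][K]$, and that this assignment is inverse to the previous one. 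Since both assignments are explicitly definable from the relevant generic filters, the standard criterion for forcing equivalence (a mutually inverse pair of such reconstructions) yields $\mathcal{B}(\mathbb{P}*\dot{\mathcal{P}}(Z)/\bar I)\cong\mathcal{B}(\mathcal{P}(Z)/I*\dot{j(\mathbb{P})})$.

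The main obstacle is the pair of ``genericity upgrades'': that $\bar H\cap V$ is $\mathcal{P}(Z)/I$-generic over $V$ (equivalently, the regularity of $e_0$, i.e.\ that $V$-maximal antichains stay predense and $\bar I\cap V=I$), and, dually, that $G_M=j_{\bar H}(G)$ is generic over the whole model $V[H]$ rather than merely over the ultrapower $M$. Both are precisely where the hypotheses are used: the $\kappa$-c.c.\ of $\mathbb{P}$ keeps the relevant antichains small, while $\kappa$-completeness of $I$ together with $\crit(j_H)=\kappa$ ensures the relevant dense sets and antichains are either visible in $V$ or moved pointwise by $j_H$, so that being generic over the ``small'' model suffices. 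Once these are settled, the lifting computations and the check that the two assignments are mutually inverse are routine bookkeeping.
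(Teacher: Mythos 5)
The paper does not prove this statement; it is stated as a theorem of Foreman and cited to \cite{MR3038554}, so there is no ``paper's own proof'' to compare against. That said, your proposal deserves comment on its own merits.

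Your generic-correspondence strategy is a legitimate way to establish a forcing equivalence, and the bookkeeping surrounding it (the identification $V[G][\bar H]=V[H][G_M]$, $G=\{p:j_H(p)\in K\}$, $\bar j=j_{\bar H}$, $j_{\bar H}(G)=K$) is sound. The problem is that the two ``genericity upgrades'' you flag are not routine, and the heuristic you give for them does not close the gap. To show that $(H,G_M)$ really is a $\mathcal P(Z)/I*\dot{j(\mathbb P)}$-generic, you must show $G_M$ meets every dense $D\subseteq j_H(\mathbb P)$ with $D\in V[H]$, not merely those in $M$; since $I$ is only assumed precipitous, $M$ need not be closed enough for $\mathcal P(j_H(\mathbb P))^{V[H]}\subseteq M$, so ``generic over the small model'' does not suffice and such $D$ are neither ``visible in $V$'' nor moved pointwise. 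Dually, the assertion that $\bar H$ is $\mathcal P(Z)/\bar I$-generic over $V[G]$ is not a consequence of a naive reduction to $V$-sets: it is \emph{false} in general that a $\mathbb P$-name $\dot X$ for a $\bar I$-positive set must admit $W\in I^+\cap V$ and $p$ with $p\Vdash\check W\subseteq_{\bar I}\dot X$ (for instance, a Cohen-generic subset of $\omega_1$ is stationary but contains no ground-model stationary set modulo $\mathrm{NS}$), so the regular embedding $e_0$ is not dense and genericity of $\bar H$ needs a genuine argument.

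Those two claims are precisely where the content of Foreman's theorem lives, and they are most transparently handled not by chasing generics but by exhibiting a dense embedding directly: map a condition $(S,[\check f]_{\dot H})\in\mathcal P(Z)/I*\dot{j(\mathbb P)}$ (with $f:Z\to\mathbb P$ in $V$ representing the second coordinate via fullness) to the Boolean value $\bigl\Vert\,\check S\cap\{z:\check f(z)\in\dot G_{\mathbb P}\}\in\dot{\bar H}\,\bigr\Vert$ in $\mathcal B(\mathbb P*\dot{\mathcal P}(Z)/\bar I)$. Order- and incompatibility-preservation are direct; nontriviality of the image uses $\kappa$-c.c.\ to bound the range of $f$ to an antichain of size $<\kappa$ and then $\kappa$-completeness of $I$ to find a positive fiber; density is a similar local analysis of a name $\dot Y$ for a $\bar I$-positive set. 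I would suggest either supplying full proofs of your two upgrades, or replacing the generic-correspondence argument with this dense-embedding computation.
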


The following result of Baumgartner and Taylor \cite{MR645330} follows immediately from Theorem \ref{dualitynicecase}:

\begin{corollary}\label{theorem_baumgartner_taylor}
Suppose $\kappa$ is a successor cardinal, $I$ is a $\kappa^+$-saturated ideal on $\kappa$, and $\mathbb P$ is a $\kappa$-c.c.\ forcing.  Then the ideal generated by $I$ in $V^{\mathbb P}$ is $\kappa^+$-saturated if and only if $\Vdash_I j(\mathbb P)$ is $\kappa^+$-c.c.
\end{corollary}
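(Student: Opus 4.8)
The plan is to read the corollary off Theorem~\ref{dualitynicecase} using two elementary observations: that possession of the $\kappa^{+}$-c.c.\ depends only on the Boolean completion of a forcing, and that for a regular cardinal $\lambda$ a two-step iteration $\mathbb{A}*\dot{\mathbb{B}}$ has the $\lambda$-c.c.\ exactly when $\mathbb{A}$ has the $\lambda$-c.c.\ and $\Vdash_{\mathbb{A}}$ ``$\dot{\mathbb{B}}$ has the $\lambda$-c.c.''. Here we will take $\lambda=\kappa^{+}$, which is regular because $\kappa$ is a cardinal.

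First I would verify that Theorem~\ref{dualitynicecase} applies with $Z=\kappa$: the ideal $I$ is precipitous, since every saturated ideal is precipitous, and $\mathbb{P}$ is $\kappa$-c.c.\ by hypothesis. Letting $\bar I$ denote the ideal generated by $I$ in $V^{\mathbb{P}}$ (which is still $\kappa$-complete because $\mathbb{P}$ is $\kappa$-c.c.), the theorem yields
\[
\mathcal{B}\left(\mathbb{P}*\dot{\mathcal{P}}(\kappa)/\bar I\right)\;\cong\;\mathcal{B}\left(\mathcal{P}(\kappa)/I*\dot{j(\mathbb{P})}\right).
\]
Because a poset has the $\kappa^{+}$-c.c.\ if and only if its Boolean completion does, the left-hand forcing has the $\kappa^{+}$-c.c.\ if and only if the right-hand one does.

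Next I would analyze the two sides as iterations. On the left, $\mathbb{P}$ is $\kappa$-c.c., hence $\kappa^{+}$-c.c., so the iteration lemma shows $\mathbb{P}*\dot{\mathcal{P}}(\kappa)/\bar I$ has the $\kappa^{+}$-c.c.\ exactly when $\Vdash_{\mathbb{P}}$ ``$\mathcal{P}(\kappa)/\bar I$ has the $\kappa^{+}$-c.c.'', i.e.\ exactly when $\bar I$ is saturated in $V^{\mathbb{P}}$. On the right, $\mathcal{P}(\kappa)/I$ has the $\kappa^{+}$-c.c.\ since $I$ is saturated, so $\mathcal{P}(\kappa)/I*\dot{j(\mathbb{P})}$ has the $\kappa^{+}$-c.c.\ exactly when $\Vdash_{I}$ ``$j(\mathbb{P})$ has the $\kappa^{+}$-c.c.''. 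Chaining these three equivalences gives precisely the asserted biconditional.

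I do not anticipate a real obstacle, which is why the statement is presented as a corollary. The only points needing any care are the routine facts that $\kappa$-c.c.\ forcing preserves $\kappa$-completeness and that $\bar I$ has its expected simple description, so that ``$\mathcal{P}(\kappa)/\bar I$'' names the intended poset, together with the reverse direction of the two-step iteration lemma, which genuinely uses the regularity of $\kappa^{+}$ in a pigeonhole argument applied to a hypothetical antichain of size $\kappa^{+}$.
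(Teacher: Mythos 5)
Your proof is correct and is precisely the argument the paper has in mind: the paper states the corollary follows immediately from Theorem~\ref{dualitynicecase}, and you have simply supplied the routine details (saturated implies precipitous, the Boolean-completion isomorphism preserves the $\kappa^+$-c.c., and the two-step chain condition lemma applied to both sides). No gaps.
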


\section{Rigidity with minimal generics}\label{section_rigid_minimal}

In this section, we will prove that it is consistent for $\mu^+$, the successor of a regular uncountable cardinal, to carry a rigid saturated $\mu$-minimal ideal.  First we note the following obstruction.

\begin{observation}\label{observation_GCH_must_fail}
	If $\GCH$ holds and $\mu$ is regular, then there is no $\mu$-minimal presaturated ideal on $\mu^+$.
\end{observation}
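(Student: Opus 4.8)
The plan is to exploit the generic ultrapower: $\mu$‑minimality will force $\mathcal P(\mu^+)/I$ to be so small that it cannot support a presaturated ideal. Write $\kappa=\mu^+$ and suppose, toward a contradiction, that $I$ is a $\mu$‑minimal presaturated ideal on $\kappa$ and $\GCH$ holds. Since $2^\kappa=\kappa^+$, the Fact of Section~\ref{section_preliminaries} gives that $I$ is precipitous and that for $G\subseteq\mathcal P(\kappa)/I$ generic, the transitive collapse $M\cong \Ult(V,G)$ of the generic ultrapower, with embedding $j\colon V\to M$, satisfies $M^{\kappa}\cap V[G]\subseteq M$. Because $I$ is $\kappa$‑complete and non‑principal, $\crit(j)=\kappa$ and $j(\kappa)>\kappa$, so $\kappa$ is not a cardinal of $M$ and $|\kappa|^{M}=\mu$; as $M\subseteq V[G]$, this means $\mathcal P(\kappa)/I$ collapses $\kappa$ to $\mu$. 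Fix a surjection $s\colon\mu\to\kappa$ in $V[G]$; coding the induced prewellordering of $\mu$ by a $V$‑fixed pairing function yields some $X\in\mathcal P(\mu)^{V[G]}\setminus V$ with $V[X]=V[s]$.

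Now apply $\mu$‑minimality: since $X\notin V$ we get $V[X]=V[G]$, hence also $V[s]=V[G]$. Consequently $\mathcal B:=\mathcal B(\mathcal P(\kappa)/I)$ is generated as a complete Boolean algebra by the $\mu$ elements $[\![\check\alpha\in\dot X]\!]$ ($\alpha<\mu$), so by the standard bound on complete subalgebras $|\mathcal B|\le 2^{\mu}=\kappa$, and therefore $|\mathcal P(\kappa)/I|\le\kappa$; since this forcing collapses $\kappa$ it cannot be $\kappa$‑c.c., so in fact $|\mathcal P(\kappa)/I|=\kappa$. I would then record the structural fact that $\mathcal P(\kappa)/I$ adds no new $<\mu$‑sequence of ordinals below $\kappa$: any such sequence lies in $M$ by $M^{\kappa}\cap V[G]\subseteq M$ and has bounded range $\gamma<\kappa$ (the cofinality of $\kappa$ in $V[G]$ is $\mu$, since $\mu$ is not collapsed); but under $\GCH$ the set of all $V$‑sequences of length $<\mu$ of ordinals $<\gamma$ has size $\le\mu<\crit(j)$, so $j$ fixes it pointwise, whence no such sequence is new. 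In particular $s\restriction\alpha\in V$ for every $\alpha<\mu$, so the conditions $[\![\dot s\restriction\alpha=t]\!]$ (for $\alpha<\mu$ and $t\in({}^{\alpha}\kappa)^{V}$) form a ${<}\mu$‑closed suborder of $\mathcal B$ order‑isomorphic to $\Coll(\mu,\kappa)$, and (since $V[s]=V[G]$) this suborder generates $\mathcal B$.

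The final step — and the main obstacle — is to conclude from this that $\mathcal B\cong\mathcal B(\Coll(\mu,\kappa))$, i.e.\ that the ${<}\mu$‑closed suborder above is in fact \emph{dense} in $\mathcal B$, not merely a complete generating set. Granting that, one is done: $\Coll(\mu,\kappa)\cong\Coll(\mu,\kappa)\times\Coll(\mu,\kappa)$, so $\mathcal B$ splits as a product $\mathcal B_{1}\times\mathcal B_{2}$ of two nontrivial factors each collapsing $\kappa$ to $\mu$; the $\mathcal B_{1}$‑generic produces a proper intermediate extension $V[G_{1}]\subsetneq V[G]$ that collapses $\kappa$, hence contains some new $Y\in\mathcal P(\mu)^{V[G_{1}]}\setminus V$, and then $\mu$‑minimality of $\mathcal P(\kappa)/I$ forces $V[Y]=V[G]$, contradicting $V[Y]\subseteq V[G_{1}]\subsetneq V[G]$. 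The delicate point is the density/absorption claim, because $\mathcal P(\kappa)/I$ is not presented as a $<\mu$‑closed forcing and a priori could collapse $\kappa$ "minimally"; handling this — by showing that the initial‑segment suborder of $s$ meets every maximal antichain of $\mathcal B$, equivalently that $s$ may be taken $\Coll(\mu,\kappa)$‑generic over $V$ — is where the real work of the argument lies, and is the step I expect to be hardest to make rigorous.
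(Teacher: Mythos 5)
Your proposal contains a genuine gap, which you yourself correctly identify: you need the $<\mu$-closed initial-segment suborder of $\mathcal B=\mathcal B(\mathcal P(\kappa)/I)$ to be \emph{dense}, not merely completely generating, in order to conclude $\mathcal B\cong\mathcal B(\Coll(\mu,\kappa))$ and then split the forcing as a product. The earlier observation that $\mathcal P(\kappa)/I$ adds no new ${<}\mu$-sequences of ordinals below $\kappa$ is strictly weaker than $<\mu$-closure of the poset, and it does not by itself make the induced surjection $s\colon\mu\to\kappa$ a $\Coll(\mu,\kappa)$-generic over $V$: an arbitrary ${<}\mu$-distributive collapse of $\kappa$ to $\mu$ with $V[s]=V[G]$ need not present $\mathcal B$ densely through initial segments of $s$. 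I don't see how to close this without essentially solving the original problem by other means.

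The paper's proof sidesteps all of this structural analysis of $\mathcal P(\kappa)/I$. It keeps only the two facts you also used — that $M^\kappa\cap V[G]\subseteq M$ (hence, by $\GCH$, $([\mu]^{<\mu})^V=([\mu]^{<\mu})^{V[G]}$, so $V$ and $V[G]$ have the same conditions of $\Add(\mu)$) and that $|\kappa|^{V[G]}=\mu$ — and directly builds, inside $V[G]$, a descending $\mu$-sequence of conditions in $\Add(\mu)^V$ meeting every $V$-dense set (there are only $\mu$-many in $V[G]$ once $\kappa$ is collapsed, and limit stages are handled because ${<}\mu$-length sequences of conditions lie in $V$). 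The resulting $X\subseteq\mu$ is $V$-generic for $\Add(\mu)$, hence $X\notin V$, but $V[X]$ has the same cardinals as $V$ while $V[G]$ does not. So $V[X]\neq V[G]$, contradicting $\mu$-minimality immediately, with no appeal to a density or absorption claim. Your product-decomposition idea would work if one could show $\mathcal P(\kappa)/I\cong\mathcal B(\Coll(\mu,\kappa))$, but the paper's route avoids having to establish any such isomorphism.
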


\begin{proof}Suppose $I$ is a presaturated idea on $\mu^+$, and $j : V \to M \subseteq V[G]$ is a generic ultrapower embedding derived from $I$.  By $\GCH$ and the closure of $M$, $([\mu]^{<\mu})^V = j([\mu]^{<\mu}) = ([\mu]^{<\mu})^M = ([\mu]^{<\mu})^{V[G]}$.  In $V[G]$, $|\mathcal{P}(\Add(\mu))^V| = \mu$.  Therefore, in $V[G]$, we can recursively choose a sequence $\langle p_\alpha : \alpha < \mu \rangle \subseteq \Add(\mu)$ that generates a $V$-generic filter.  If $X \subseteq \mu$ is the subset of $\mu$ coded by this sequence, then $V[X] \not= V[G]$, since $V[X]$ has the same cardinals as $V$.
\end{proof}

This implies that any forcing used to produce an extension with a rigid saturated $\mu$-minimal ideal, must necessarily force $\GCH$ to fail.  We now prove Theorem \ref{theorem_rigid_saturated_minimal} by starting with a model of $\GCH$ in which there is a saturated ideal on $\mu^+$ where $\mu$ is a regular cardinal, forcing to control a certain cardinal characteristic and then carrying out the relevant arguments of \cite{Woodin} and \cite{MR1955241} in this context.

\begin{proof}[Proof of Theorem \ref{theorem_rigid_saturated_minimal}]

Suppose $\mathcal A$ is an antichain in $\mathcal{P}(\mu)/ \{$bounded sets$\}$.  Following \cite{KunenBook}, we define a forcing $\mathbb C_{\mathcal A}$:  Conditions are of the form $p = (s,T)$, where $s$ is a bounded subset of $\mu$, and $T$ is a subset of $\mathcal A$ of size $<\mu$.  We say $(s_1,T_1) \leq (s_0,T_0)$ when $s_0 \subseteq s_1$, $T_0 \subseteq T_1$, and for all $t \in T_0$, $s_1 \cap t = s_0 \cap t$. Clearly, if $G \subseteq  \mathbb C_{\mathcal A}$ is generic, and $s_G = \bigcup \{s : \exists T (s,T) \in G \}$, then $s_G \cap a$ is bounded in $\mu$ for all $a \in \mathcal A$.

\begin{lemma}
	Suppose $\mathbb C_{\mathcal A}$, $G$, and $s_G$ are as above, and suppose $b \in \mathcal P(\mu)^V$ is an unbounded subset of $\mu$ such that $|b \cap a| < \mu$ for all $a \in \mathcal A$.  Then $s_G \cap b$ is unbounded in $\mu$.
\end{lemma}

\begin{proof}
	Let $(s,T)$ be any condition, and let $\alpha < \mu$ be arbitrary.  Since $|b \cap t| < \mu$ for all $t \in T$, there is $\beta \geq \alpha$ such that $\beta \in b \setminus \bigcup T$.  Then $(s \cup \{\beta\},T) \leq (s,T)$, and this condition forces $\sup(s_G \cap b) \geq \alpha$.
\end{proof}

\begin{lemma}
	If $\mathbb C_{\mathcal A}$ is as above, then it is ${<}\mu$-closed and $2^{<\mu}$-centered.
\end{lemma}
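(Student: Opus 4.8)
The plan is to verify the two stated properties of $\mathbb{C}_{\mathcal A,\mathcal B}$ directly from the definition of the forcing, treating closure and centeredness separately. For ${<}\mu$-closure, I would take a descending sequence $\langle (s_\alpha, T_\alpha) : \alpha < \delta \rangle$ with $\delta < \mu$ and form the natural candidate lower bound. The second coordinate is easy: let $T = \bigcup_{\alpha<\delta} T_\alpha$, which has size $<\mu$ since $\mu$ is regular (hence $<\mu$-many sets each of size $<\mu$ union to a set of size $<\mu$) — this uses that $\mu$ is regular, which is part of the standing hypothesis. The first coordinate requires a small observation: the sets $s_\alpha$ need not be increasing, but the ordering condition guarantees that for $\alpha \le \beta$ we have $s_\alpha \cap t = s_\beta \cap t$ for every $t \in T_\alpha$. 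I would argue that $s := \bigcup_{\alpha<\delta} s_\alpha$ is still a bounded subset of $\mu$ (each $s_\alpha$ is bounded and $\mathrm{cf}(\mu) = \mu > \delta$), and then check that $(s, T) \le (s_\alpha, T_\alpha)$ for every $\alpha$ by verifying $s \cap t = s_\alpha \cap t$ for $t \in T_\alpha$: indeed any point of $s$ lies in some $s_\beta$, and replacing $\beta$ by $\max(\alpha,\beta)$ and using the coherence of the restrictions shows its intersection with $t$ is already determined by $s_\alpha$.

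For $2^{<\mu}$-centeredness, the key point is that the first coordinate $s$ of a condition $(s,T)$ already determines compatibility among conditions sharing it, provided the second coordinates are ``small'' in a way the centering function can record. Concretely, there are $2^{<\mu}$ possibilities for the bounded set $s$ (since $|[\mu]^{<\mu}| = 2^{<\mu}$ under the relevant cardinal arithmetic, or more carefully $\mu^{<\mu} = 2^{<\mu}$ when $\mu$ is regular, so the bounded subsets of $\mu$ number $2^{<\mu}$). I would define the centering function by sending $(s,T)$ to $s$, and then show that any finitely many — indeed, fewer than $\mu$-many — conditions $(s, T_0), \dots, (s, T_{n-1})$ with a common first coordinate have the common lower bound $(s, T_0 \cup \cdots \cup T_{n-1})$: the compatibility requirement ``$s_1 \cap t = s_0 \cap t$ for $t \in T_0$'' is satisfied trivially because $s_1 = s_0 = s$. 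One should double-check whether the paper wants $2^{<\mu}$-centered in the strong sense (finitely many conditions in a piece have a lower bound) or the ${<}\mu$-linked sense; the argument above gives the strong form with the same partition.

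I do not anticipate a genuine obstacle here — this is a bookkeeping lemma — but the one spot demanding a little care is the claim that $\bigcup_{\alpha<\delta} s_\alpha$ remains a legitimate condition-coordinate, i.e.\ bounded in $\mu$; this is exactly where regularity of $\mu$ is used, and it is worth stating explicitly since the whole point of the ${<}\mu$-closure is to preserve $\mu$ and its subsets later in the proof of Theorem~\ref{theorem_rigid_saturated_minimal}. A secondary point worth a sentence is why the cardinal $2^{<\mu}$ (rather than something smaller) is the right bound: the number of distinct bounded subsets of $\mu$ is $\sum_{\alpha<\mu} 2^{|\alpha|} = 2^{<\mu}$, and one cannot do better because already the subsets of a single unbounded interval of order type $<\mu$ are this numerous, so the centering is optimal.
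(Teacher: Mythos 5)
Your proof is correct and follows the same route as the paper: take coordinatewise unions for the lower bound of a descending chain, and use the first coordinate $s$ as the centering invariant, with $(s,\bigcup_i T_i)$ witnessing compatibility. One small slip worth flagging: you say the first coordinates $s_\alpha$ ``need not be increasing,'' but the definition of the ordering explicitly requires $s_0 \subseteq s_1$ whenever $(s_1,T_1)\leq (s_0,T_0)$, so in a descending chain the $s_\alpha$ are in fact $\subseteq$-increasing and the coherence clause is automatic; this makes the verification that $(\bigcup_\alpha s_\alpha,\bigcup_\alpha T_\alpha)$ is a lower bound even more immediate than you suggest, and it is what the paper tacitly relies on in its one-line argument.
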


\begin{proof}
If $\langle (s_\alpha,T_\alpha) : \alpha < \beta \rangle$ is a descending sequence with $\beta < \mu$, then $(\bigcup_\alpha s_\alpha,\bigcup_\alpha T_\alpha)$ is the infimum of the sequence.  If $(s,T_0)$ and $(s,T_1)$ are two conditions, then $(s,T_0 \cup T_1)$ is their infimum.
\end{proof}

Let $\P_{\kappa^+}$ be a ${<}\mu$-support forcing iteration $\langle (\mathbb P_\alpha, \dot{\mathbb Q}_\alpha) : \alpha < \kappa^+ \rangle$ satisfying the following properties.
\begin{enumerate}
\item For each $\alpha$, $\Vdash_\alpha \dot{\mathbb Q}_\alpha = \mathbb C_{\mathcal A}$ for some antichain $\mathcal A \subseteq \mathcal{P}(\mu)/ \{$bounded sets$\}$.

\item For every $\alpha < \kappa^+$ and every $\mathbb P_\alpha$-name $\sigma$ for an antichain, there is $\beta \geq \alpha$ such that $\Vdash_\beta \dot{\mathbb Q}_\beta = \mathbb C_\sigma$.
\item Every $\mathbb P_{\kappa^+}$-name $\tau$ for an antichain, there is $\alpha < \kappa^+$ and a $\mathbb P_\alpha$-name $\sigma$ such that $\Vdash_{\kappa^+} \tau = \sigma$.
\end{enumerate}

An iteration satisfying (1) and (2) can be defined using a suitable bookkeeping function because inductively we have $\forces_{\alpha} 2^\kappa=\kappa^+$. Furthermore, (3) is a consequence of the fact that the entire iteration is $\kappa$-c.c.

Any ${<}\mu$-support iteration of ${<}\mu$-closed posets is ${<}\mu$-closed.  Therefore, there is a dense set of conditions $p$ such that at all $\alpha < \kappa^+$, there is $s \in V$ with $p \restriction \alpha \Vdash_\alpha p(\alpha) = (\check s,\dot T)$ for some name $\dot T$.  We may assume that we force with this dense suborder.

We show by induction that for all $\alpha < \kappa^+$, $|\mathbb P_\alpha| = \kappa$, $\mathbb P_\alpha$ is $\kappa$-c.c, and $\mathbb P_\alpha$ preserves $\GCH$.  The base case and successor steps are easy.  The cardinality claim at limit stages follows from the fact that $\kappa^{<\mu} = \kappa$.  To show the chain condition, let $\{ p_\beta : \beta < \kappa \} \subseteq \mathbb P_{\alpha}$ and let $A \in [\kappa]^\kappa$ be such that $\{ \supp p_\alpha : \alpha \in A \}$ is a $\Delta$-system and such that the bounded sets of $\mu$ mentioned on the root are all the same.  The chain condition and cardinality together imply that $\GCH$ is preserved going forward. In the end, $\Vdash_{\kappa^+} 2^\mu = \kappa^+$, but the $\kappa$-c.c.\ holds of the whole iteration for the same reason as above.

To get the desired consistency result, we use Corollary \ref{theorem_baumgartner_taylor}.  In our situation, if $j : V \to M \subseteq V[G]$ is a generic ultrapower embedding derived from a saturated normal ideal $I$ on $\kappa=\mu^+$ then it follows by elementarily that in $M$, the forcing $j(\mathbb P_{\kappa^+})$ is a ${<}\mu$-support iteration of $\mu$-centered forcings of length $j(\kappa^+)$.  Since $M^\mu \cap V[G] \subseteq M$, this holds in $V[G]$ as well.  Thus in $V[G]$, we can carry out the same $\Delta$-system argument to show that $j(\mathbb P_{\kappa^+})$ has the $\kappa^+$-c.c.  It follows from Theorem \ref{theorem_baumgartner_taylor} that $\bar I$, the ideal generated by $I$, is saturated $V^{\mathbb P_{\kappa^+}}$.

Let $H \subseteq \mathbb P_{\kappa^+}$ be generic.  To show $\bar I$ is $\mu$-minimal in $V[H]$, suppose $\bar G \subseteq \mathcal{P}(\kappa)/ \bar I$ is generic over $V[H]$, and $x \subseteq \mu$ is in $V[H][\bar G] \setminus V[H]$.  Let $\tau$ be a name for $x$. By \cite[Proposition 2.12]{MR2768692}, there is a function $f : \kappa \to \mathcal{P}(\mu)$ in $V[H]$ such that $\Vdash_{\bar I} [\check f]_{\dot G} = \tau$.  Since $\bar j(f)(\kappa) = x$ and $x \not= f(\alpha)$ for any $\alpha<\kappa$, $f$ is one-to-one on a $\bar I$-measure-one set, which we may assume is all of $\kappa$ by adjusting $f$ off this large set.  Since $2^{<\mu} = \mu$ in $V[H]$, each $f(\alpha)$ is coded by a branch through the complete binary tree of height $\mu$, and so the range of $f$ corresponds to a collection of almost-disjoint subsets of this tree.  Since $V[H]$ is a forcing extension by $\P_{\kappa^+}$, it follows that for any $Y \subseteq \kappa$ in $V[H]$, there is $y \subseteq \mu$ such that $y \cap f(\alpha)$ is unbounded in $\mu$ if and only if $\alpha \in Y$.  Thus we have $Y \in \bar G$ iff $\kappa \in \bar j(Y)$ iff $y \cap \bar j(f)(\kappa)$ is unbounded in $\mu$.  This implies that from $x$ we may recover $G$.

Now we show that $\bar I$ is rigid, following \cite{MR1955241}.

\begin{lemma}
\label{disjoint}
If $f,g$ are one-to-one functions with respective disjoint domains $A,B$ contained in a regular cardinal $\kappa$, then there are nonstationary $A',B'$ such that $f[A\setminus A'] \cap g[B \setminus B'] = \emptyset$.
\end{lemma}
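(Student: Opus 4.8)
The plan is to exploit a Fodor-style pressing-down argument. First I would dispose of the trivial case: if $|A| < \kappa$ then $f[A]$ has size $< \kappa$; since $\kappa$ is regular, a bounded (hence nonstationary) set $A' \supseteq A$ works with $B' = \emptyset$, and symmetrically if $|B| < \kappa$. So assume $|A| = |B| = \kappa$. Since $f$ and $g$ are one-to-one and $A, B \subseteq \kappa$, the ranges $f[A]$ and $g[B]$ are subsets of $\kappa$ of size $\kappa$; the goal is to shrink $A$ and $B$ by nonstationary amounts so that the surviving ranges become disjoint.

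The key idea is to define, for each $\alpha$ in a large set, a ``return value'' and press down. For $\alpha \in f[A]$, write $\alpha = f(a_\alpha)$ with $a_\alpha \in A$ uniquely determined; similarly for $\beta \in g[B]$, write $\beta = g(b_\beta)$. Now consider the diagonal-type regression: for a point $\xi$ that lies in both $f[A]$ and $g[B]$, we have $\xi = f(a) = g(b)$ with $a \neq b$ (possibly, though $a = b$ is allowed a priori; but note $a \in A$, $b \in B$, and $A \cap B = \emptyset$, so in fact $a \neq b$ automatically). The natural move is to look at the set $C = f[A] \cap g[B]$ of ``collisions'' and argue that the preimages can be thinned. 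Specifically, if $C$ were stationary, then the map $\xi \mapsto a_\xi$ (into $A \subseteq \kappa$) and $\xi \mapsto b_\xi$ would be regressive on a tail; but regressiveness needs $a_\xi < \xi$, which may fail. The cleaner approach: build a closure point club. Let $D$ be the club of ordinals $\delta < \kappa$ closed under $f$, $f^{-1}$, $g$, $g^{-1}$ (i.e. $f[A \cap \delta] \subseteq \delta$, $g[B \cap \delta] \subseteq \delta$, and conversely $f^{-1}[f[A] \cap \delta] \subseteq \delta$, etc.). Set $A' = A \setminus D$ and $B' = B \setminus D$, which are nonstationary since $D$ is club. I claim $f[A \cap D] \cap g[B \cap D] = \emptyset$: if $f(a) = g(b) = \xi$ with $a, b \in D$, then $a, b < \xi$ — wait, that is still not immediate.

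The fix is to iterate the closure so that $D$ consists of ordinals $\delta$ with the stronger property that $f[A] \cap \delta = f[A \cap \delta]$ and $g[B] \cap \delta = g[B \cap \delta]$ (``$f, g$ are continuous-at-$\delta$'' in the obvious sense), which holds on a club by a standard Skolem-hull / closing-off argument since each such condition fails on at most a nonstationary set. Then for $\delta \in D$ a collision point $\xi < \delta$ would be $f(a) = g(b)$ with $a \in A \cap \delta$, $b \in B \cap \delta$, so both $a, b < \delta$ — but I actually want to compare $a$ and $\xi$, not $\delta$. Here is the genuinely correct argument: pass to the club $D$ of $\delta$ closed under $f, g$ and their inverses as above, and in addition close under the operation taking an ordinal to the next collision point; more simply, observe that the collision set $C = f[A] \cap g[B]$, IF stationary, gives via $\xi \mapsto (a_\xi, b_\xi)$ a pair of functions, and we can find a club $D$ where additionally every $\xi \in C \cap \delta$ has $a_\xi, b_\xi < \delta$; then restricting to $\delta \in C \cap D$ (assuming $C$ stationary for contradiction, so $C \cap D \neq \emptyset$) is not quite the contradiction either.

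The honest resolution — and I expect this to be the main obstacle, namely getting the bookkeeping exactly right — is to instead choose $A', B'$ directly: let $D$ be club with the property that $f \restriction (A \cap \delta)$ maps into $\delta$ and $g \restriction (B \cap \delta)$ maps into $\delta$ and, crucially, $f^{-1}[\delta] \cap A \subseteq \delta$ and $g^{-1}[\delta] \cap B \subseteq \delta$. Take $A' = A \setminus D$, $B' = B \setminus D$. Suppose $\xi \in f[A \setminus A'] \cap g[B \setminus B']$, say $\xi = f(a) = g(b)$ with $a \in A \cap D$, $b \in B \cap D$ and $a \neq b$ (as $A \cap B = \emptyset$). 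WLOG $a < b$. Pick $\delta \in D$ with $a < \delta \le b$ (possible since $D$ is unbounded and $a < b$... if $b$ itself is in $D$ we may need $\delta = $ some point of $D$ in $(a, b]$; if no such point, then $(a,b] \cap D = \emptyset$, impossible when $b \in D$). So take $\delta \in D \cap (a, b]$, in fact $\delta$ a limit point of $D$ below or equal to $b$ with $\delta > a$; then $a \in A \cap \delta$ so $\xi = f(a) < \delta \le b$. But $\xi = g(b)$ and $b \ge \delta$, while $g^{-1}[\delta] \cap B \subseteq \delta$ forces $b < \delta$ from $\xi = g(b) < \delta$ — contradiction. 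Hence no such $\xi$ exists, completing the proof. I would present this last version cleanly, folding the two closure conditions into a single club obtained by the usual argument that the set of $\delta$ closed under finitely many functions (here $f, g, f^{-1}, g^{-1}$, suitably interpreted as functions on $\kappa$) contains a club.
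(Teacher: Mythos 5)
Your final argument is correct, but it takes a genuinely different route from the paper's. You shrink $A$ and $B$ by removing the complement of a single club $D$ of ordinals $\delta$ closed under $f\restriction A$, $g\restriction B$, and their (partial) inverses; then if $a<b$ with $a\in A\cap D$, $b\in B\cap D$, closure of $\delta=b$ under $f\restriction A$ gives $f(a)<b$, while closure of $\delta=b$ under $(g\restriction B)^{-1}$ gives $g(b)\geq b$, so $f(a)\neq g(b)$. (This is tighter than what you wrote: you may simply take $\delta=b$ itself; there is no need to hunt for a limit point of $D$.) The paper instead sets $C=f[A]\cap g[B]$, forms the bijection $\pi\colon f^{-1}[C]\to g^{-1}[C]$ with $f(\alpha)=g(\pi(\alpha))$, observes that $\pi$ has no fixed points because $A\cap B=\emptyset$, and applies Fodor: a regressive injection cannot live on a stationary set, so $A'=\{\alpha:\pi(\alpha)<\alpha\}$ and $B'=\{\beta:\pi^{-1}(\beta)<\beta\}$ are each nonstationary, and a collision $f(\alpha)=g(\beta)$ would force either $\alpha\in A'$ or $\beta\in B'$. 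The paper's proof is shorter and produces canonical $A',B'$; yours trades Fodor for the even more elementary fact that closure points of finitely many functions on a regular $\kappa$ form a club. Both are fine. One presentational remark: your submission cycles through several abandoned attempts (``wait, that is still not immediate,'' ``is not quite the contradiction either'') before reaching the correct one; in a finished write-up keep only the last argument and state the club property of $D$ once, cleanly, at the start.
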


\begin{proof}
Let $C= f[A] \cap g[B]$, and let $\pi : f^{-1}[C] \to g^{-1}[C]$ be a bijection such that $f(\alpha) = g(\pi(\alpha))$ for all $\alpha \in f^{-1}[C]$.  Neither $\pi$ nor $\pi^{-1}$ can be regressive on a stationary set.  Let $A' = \{ \alpha : \pi(\alpha) < \alpha \}$ and $B' = \{ \beta : \pi^{-1}(\beta) < \beta \}$.  If $x = f(\alpha) = g(\beta)$, then $\beta = \pi(\alpha)$.  Either $\beta < \alpha \in A'$, or $\alpha < \beta \in B'$.
\end{proof}

If $\bar I$ is nonrigid then whenever $G_0 \subseteq \mathcal{P}(\kappa)/\bar I$ is generic, there is a different generic $G_1$ such that $V[G_0] = V[G_1]$.  The corresponding generic ultrapower embeddings $j_0 : V[H] \to M_0=V[H]^\kappa/G_0$ and $j_1 : V[H] \to M_1=V[H]^\kappa/G_1$ are distinct, yet $\mathcal{P}(\mu)^{M_0} = \mathcal{P}(\mu)^{M_1} = \mathcal{P}(\mu)^{V[H][G_0]}$.  Let $x \in \mathcal{P}(\mu)^{V[H][G_0]} \setminus \mathcal{P}(\mu)^{V[H]}$.  Let $f,g$ be such that $j_0(f)(\kappa) = j_1(g)(\kappa) = x$.  Since $G_0 \not= G_1$, we can pick disjoint $A,B \subseteq \kappa$ such that $\kappa \in j_0(A)$ and $\kappa \in j_1(B)$.  As before, since $x \notin V[H]$, we may assume $f$ and $g$ are one-to-one on $A$ and $B$.

Using Lemma~\ref{disjoint}, we may also assume $f[A] \cap g[B] = \emptyset$.  Since $H$ is $\mathbb P_{\kappa^+}$-generic over $V$, there is $y \subseteq \mu$ such that $|y \cap z| = \mu$ for each $z \in f[A]$ and $|y \cap z| < \mu$ for each $z \in g[B]$.  By elementarity, $M_0 \models |y \cap x| = \mu$ and $M_1 \models |y \cap x| < \mu$.  But this is an absolute property between the models, so we have a contradiction.
\end{proof}

\section{Rigidity with $\GCH$}\label{section_rigidity_with_gch}

Suppose $\kappa$ is an inaccessible cardinal and $\mu<\kappa$ is regular. All of the standard posets used to force $\kappa=\mu^+$, such as the Levy collapse, Silver collapse, etc., have many nontrivial automorphisms. Hence, if $\P$ is one of these standard collapse forcings and $G$ is generic for $\P$ over $V$, then in $V[G]$ there are many distinct $V$-generic filters for $\P$. We will show that there is a forcing $\mathbb{C}$ such that if $G$ is generic for $\mathbb{C}$ over $V$, then $V[G]\models\kappa=\mu^+$ and, in $V[G]$, there is a unique $V$-generic filter for $\mathbb{C}$. We will use a variation of the coding forcing introduced by Friedman and Magidor \cite{MR2548481} to add a club which will code the generic for a collapsing forcing, as well as the generic for the coding forcing itself into the stationarity of subsets of $\kappa$.

Suppose $\P$ is some ${<}\mu$-closed forcing such that $\P$ is $\kappa$-c.c., $|\P|=\kappa$ and $\forced_\P \kappa=\mu^+$. Fix a bijection $b:\kappa\to\P$ and let $G$ be generic for $\P$ over $V$. Working in $V$, let $W,X,Y$ and $Z$ be increasing functions from $\kappa$ to $\kappa$ such that the ranges of $W,X,Y$ and $Z$ are each cofinal in $\kappa$ and together form a disjoint partition of $\kappa$. Let $\<\eta_\alpha\st\alpha<\kappa\>$ be an increasing enumeration of the regular cardinals in the interval $[\mu^+,\kappa)$. For each $\alpha<\kappa$ let $E_\alpha=\cof(\eta_\alpha)^V\cap[\eta_\alpha,\kappa)$ and let $\vec{E}_\kappa=\<E_\alpha\st\alpha<\kappa\>$. Notice that in $V[G]$ each set in the sequence $\vec{E}_\kappa$ remains stationary since $\P$ is $\kappa$-c.c. Working in $V[G]$, let $\Q=\Code(G,\vec{E}_\kappa)$ be the set of all closed bounded $c\subseteq\kappa=(\mu^+)^{V[G]}$ such that for $i<\kappa$,
\begin{itemize}
\item if $b(i)\in G$ then $c\cap E_{W(i)}=\emptyset$ and
\item if $b(i)\notin G$ then $c\cap E_{X(i)}=\emptyset$.
\end{itemize}
Conditions in $\Q$ are ordered by setting $d\leq c$ iff: 
\begin{enumerate}
\item $d$ is an end extension of $c$ and
\item for $i\leq\max(c)$, if $i\in c$ then $(d\setminus c)\cap E_{Y(i)}=\emptyset$ and if $i\notin c$ then $(d\setminus c)\cap E_{Z(i)}=\emptyset$.
\end{enumerate}
This defines the coding poset $\Code(G,\vec{E}_\kappa)\in V[G]$.

\begin{lemma}\label{lemma_code_distributive}
The poset $\Code(G,\vec{E}_\kappa)$ defined above is ${<}\mu$-closed and ${<}\kappa$-distributive in $V[G]$.
\end{lemma}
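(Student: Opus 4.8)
The plan is to prove $<\mu$-closure and $<\kappa$-distributivity separately, with the latter being the substantial part. For $<\mu$-closure, I would take a descending sequence $\langle c_\xi : \xi < \theta \rangle$ of conditions with $\theta < \mu$; since $\P$ is $<\mu$-closed it forces $\kappa = \mu^+$, so $\theta < \mu < \kappa = \cf(\kappa)^{V[G]}$ and $\sup_\xi \max(c_\xi) < \kappa$. The natural candidate for a lower bound is $c = \bigcup_\xi c_\xi \cup \{\sup_\xi \max(c_\xi)\}$. One checks this is a closed bounded subset of $\kappa$, that the coding clauses (emptiness of intersections with the $E_{W(i)}$ or $E_{X(i)}$ according to whether $b(i) \in G$) are preserved because they are inherited from the $c_\xi$, and that the newly added top point $\delta := \sup_\xi \max(c_\xi)$ lies in none of the forbidden $E_{W(i)}, E_{X(i)}$ — here one uses that $\cf(\delta)^V \le \theta < \mu \le \mu^+ = \eta_0 \le \eta_\alpha$, so $\delta \notin E_\alpha = \cof(\eta_\alpha)^V \cap (\eta_\alpha,\kappa)$ for any $\alpha$ (the cofinality of $\delta$ is too small, or $\delta \le \eta_\alpha$). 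The order-clause (2) is also checked directly: new points of $c$ below $\max(c_\xi)$ already appeared in some $c_\eta$, and the single new top point exceeds all $\max(c_\xi)$, so the condition on $d \setminus c_\xi$ relative to $Y,Z$ holds.

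The main work is $<\kappa$-distributivity in $V[G]$. Fix in $V[G]$ a sequence $\langle D_\xi : \xi < \theta \rangle$ of open dense subsets of $\Code(G,\vec E_\kappa)$ with $\theta < \kappa$, and a condition $c_0$; I want to find a condition below $c_0$ meeting all $D_\xi$. Since $\P$ is $\kappa$-c.c., in $V$ there is a nice name for this sequence, and we can work with a sufficiently elementary submodel: pick a regular $\eta$ with $\theta, \mu < \eta < \kappa$ (taking $\eta$ among the $\eta_\alpha$, say $\eta = \eta_{\alpha^*}$ with $\alpha^*$ large enough that $\theta < \eta_{\alpha^*}$) and build an elementary submodel $N \prec H_{\chi}^{V}$ of size ${<}\kappa$, with $\theta+1 \subseteq N$, ${}^{<\eta}N \subseteq N$ is too strong, so instead arrange $\delta := N \cap \kappa \in E_{\alpha^*}$, i.e. $\cf(\delta)^V = \eta_{\alpha^*}$ — this is possible because $\eta_{\alpha^*} < \kappa$ is regular and $E_{\alpha^*}$ is stationary in $\kappa$ (in $V$, hence the construction can be arranged along a continuous chain of such submodels). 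Inside $N$ one performs a fusion/diagonalization of length $\cf(\delta)^V = \eta_{\alpha^*}$: build a descending sequence of conditions $c_0 \ge c_1 \ge \cdots$, cofinal in an increasing sequence of ordinals with supremum $\delta$, meeting successively more of the $D_\xi$ (enumerated with cofinally many repetitions), and at limits below $\eta_{\alpha^*}$ take unions as in the closure argument. The union $c_\infty := \bigcup_{\beta < \eta_{\alpha^*}} c_\beta \cup \{\delta\}$ is then a lower bound meeting every $D_\xi$; one must verify it is a legitimate condition, for which the crucial point is the newly added top point $\delta$: it must avoid every forbidden $E_{W(i)}, E_{X(i)}$ and satisfy the order clauses. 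But $\delta \in E_{\alpha^*}$ only, and $\delta \notin E_\alpha$ for $\alpha \ne \alpha^*$ since $\cf(\delta)^V = \eta_{\alpha^*} \ne \eta_\alpha$; so we just need that neither $W(i)$ nor $X(i)$ equals $\alpha^*$ for the relevant $i$'s. This is arranged by choosing $N$ (equivalently $\alpha^*$) so that $\alpha^* \notin \ran(W) \cup \ran(X)$ — recall $\ran(W), \ran(X), \ran(Y), \ran(Z)$ partition $\kappa$, so picking $\alpha^* \in \ran(Y) \cup \ran(Z)$ (a cofinal set) suffices; similarly for clause (2) of the order we need $\delta \notin E_{Y(i)} \cup E_{Z(i)}$ for $i \le \max(c_0)$ (finitely... rather, for all relevant $i$ in $c_0$ and its extension below $\delta$), which again holds as long as $\delta$'s cofinality $\eta_{\alpha^*}$ differs from $\eta_{Y(i)}, \eta_{Z(i)}$ for those $i$ — but those $i$ all lie below $\delta$, hence were enumerated inside $N$, and we can choose $\alpha^*$ above all of $W(i), X(i), Y(i), Z(i)$ for $i$ in the range that ever gets touched; since that range has size ${<}\kappa$, a suitable $\alpha^* < \kappa$ exists.

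I expect the main obstacle to be the careful bookkeeping in the diagonalization: one must simultaneously (i) meet all $\theta$ dense sets, (ii) keep the cofinality of the supremum $\delta$ equal to a chosen regular $\eta_{\alpha^*}$ that avoids all the ``forbidden'' indices attached to the ordinals actually entering the club, and (iii) ensure the emptiness conditions defining $\Q$ and the order clauses are preserved through limits — the limit-of-union step is where a point could accidentally fall into some $E_{W(i)}$ or $E_{X(i)}$. The resolution in each case is the same trick used in the closure argument: points added at limit stages below $\eta_{\alpha^*}$ have $V$-cofinality at most that limit stage's length, which is ${<}\eta_{\alpha^*} \le$ the relevant $\eta$'s that matter, so they avoid the forbidden sets for cardinality reasons; and the single genuinely new point $\delta$ is handled by the stationarity-of-$E_{\alpha^*}$ choice plus the partition property of $W,X,Y,Z$. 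Note also $<\kappa$-distributivity immediately gives that $\Q$ adds no new $<\kappa$-sequences over $V[G]$, in particular preserves $\kappa = (\mu^+)^{V[G]}$ and all the $E_\alpha$ as stationary, which is what is needed downstream.
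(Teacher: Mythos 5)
Your proof of ${<}\mu$-closure is fine and matches the paper's argument: the new top point $\delta$ has $V$-cofinality strictly below $\mu$ (here one uses that the witnessing short sequence of ordinals already lies in $V$, by the ${<}\mu$-closure of $\P$), so $\delta$ has $V$-cofinality below every $\eta_\alpha\geq\mu^+$ and hence lies in no $E_\alpha$.

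The distributivity argument, however, has a genuine gap. You propose to land $\delta=N\cap\kappa$ in some $E_{\alpha^*}$ (so $\cf(\delta)^V=\eta_{\alpha^*}\geq\mu^+$) and then run a diagonalization of length $\eta_{\alpha^*}$, ``taking unions as in the closure argument'' at limits. But the closure argument only applies to limits of length $<\mu$: it used crucially that a $<\mu$-length sequence of ordinals from $V[G]$ is already in $V$, hence the intermediate supremum has $V$-cofinality $<\mu$ and avoids every $E_\alpha$. Once your sequence reaches a limit stage of cofinality $\geq\mu$, that reasoning breaks down: the intermediate supremum $\delta'$ has $V[G]$-cofinality $\leq\mu$, but its $V$-cofinality can be $\geq\mu^+$, in which case $\delta'$ belongs to one of the $E_\beta$'s and the union plus $\{\delta'\}$ need not be a condition. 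Since $\eta_{\alpha^*}\geq\mu^+>\mu$, your construction inevitably hits such a stage (at stage $\mu$, already), and there is no closure principle in the poset to produce a lower bound there. There is also a secondary complication in the bookkeeping for $\alpha^*$: to satisfy the order clauses you must avoid $Y(i)$ for every $i$ that ends up accepted into the generic club and $Z(i)$ for every $i$ that gets passed over, and this is circular because which $i$'s are accepted depends on the very construction you are carrying out, while the forbidden set is potentially of size $|\delta|$ in $V$. The paper avoids all of this with a simpler and more robust move: since $\kappa=(\mu^+)^{V[G]}$, it suffices to meet $\mu$-many dense sets, and one targets a point $\delta\in\cof(\mu)^V\cap\kappa$ — a set which is stationary in $V[G]$ (as $\P$ is $\kappa$-c.c.) but deliberately does \emph{not} appear on the sequence $\vec{E}_\kappa$ (every $E_\alpha$ consists of points of $V$-cofinality $\geq\mu^+$). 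Then $\delta$ lies in no $E_\alpha$ whatsoever, no constraint tracking is needed, all intermediate limits have cofinality $<\mu$ and are handled by closure (this is where $N^{<\mu}\cap V[G]\subseteq N$ is used), and a $\mu$-length construction suffices. If you want to fix your argument, replace the target $E_{\alpha^*}$ with $\cof(\mu)^V\cap\kappa$ and restrict the diagonalization to length $\mu$.
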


\begin{proof}

First we show that $\Code(G,\vec{E}_\kappa)$ is ${<}\mu$-closed in $V[G]$. Suppose $\gamma<\mu$ and $\<c_i\st i<\gamma\>\in V[G]$ is a decreasing sequence of conditions in $\Code(G,\vec{E}_\kappa)$. Let $\delta=\sup\{\max(c_i)\st i<\gamma\}$. Then $\cf(\delta)^{V[G]}\leq\gamma$, which implies $\cf(\delta)^V\leq\gamma<\mu$ and since every element of $\bigcup_{\alpha<\kappa} E_\alpha$ has cofinality greater than $\mu$ in $V$, it follows that $\bigcup_{i<\gamma}c_i\cup\{\delta\}\in\Code(G,\vec{E}_\kappa)$ is a lower bound of the sequence.

Next we show that $\Code(G,\vec{E}_\kappa)$ is ${<}\kappa$-distributive in $V[G]$. Since $\kappa=(\mu^+)^{V[G]}$, it will suffice to show that $\Code(G,\vec{E}_\kappa)$ is ${\leq}\mu$-distributive in $V[G]$. Fix a sequence $\vec{D}=\<D_i\st i<\mu\>$ of open dense subsets of $\Code(G,\vec{E}_\kappa)$ in $V[G]$ and a condition $c\in\Code(G,\vec{E}_\kappa)$. Let $S^\kappa_\mu=(\cof(\mu)\cap\kappa)^V$ and notice that $S^\kappa_\mu$ does not appear on the sequence $\vec{E}_\kappa=\<E_\alpha\st\alpha<\kappa\>$. Since $\P$ is $\kappa$-c.c., it follows that $S^\kappa_\mu$ is a stationary subset of $\kappa$ in $V[G]$. Thus, working in $V[G]$ we may fix some large regular cardinal $\theta$ and a well-ordering $<_\theta$ of $H_\theta$ and an elementary submodel $N\elemsub (H_\theta,\in,<_\theta)$ such that
\begin{itemize}
\item $c,\Code(G,\vec{E}_\kappa),S^\kappa_\mu,\vec{D}\in N$
\item $|N|^{V[G]}=\mu$
\item $N\cap\kappa\in S^\kappa_\mu$
\item $N^{<\mu}\cap V[G]\subseteq N$
\end{itemize}
Working in $V[G]$, let $\<\beta_i\st i<\mu\>$ be an increasing, continuous and cofinal sequence of ordinals in $\delta=N\cap\kappa$. Using the well-ordering $<_\theta$ and elementarity, we may build a decreasing sequence of conditions $\<c_i\st i<\mu\>$ such that $c_0=c$ and for each $i<\mu$ we have $(1)$ $c_{i+1}\leq c_i$, $(2)$ $c_{i+1}\in D_i$, $(3)$ $\beta_i\leq\max(c_i)$ and $(4)$ $c_i\in N$. At limit stages we make use of the fact that $\Code(G,\vec{E}_\kappa)$ is ${<}\mu$-closed in $V[G]$ and $N^{<\mu}\cap V[G]\subseteq N$. Since the ordinal $\delta= N\cap(\mu^+)^{V[G]}=\sup\{\max(c_i)\st i<\mu\}$ has cofinality $\mu$ in $V$, it follows that $\delta\notin\bigcup_{\alpha<\kappa} E_\alpha$, and thus $c_{\infty}=\bigcup_{i<\mu} c_i\cup\{\delta\}\in \Code(G,\vec{E}_\kappa)$ is a lower bound of the sequence $\<c_i\st i<\mu\>$.\end{proof}

\begin{lemma}\label{lemma_unique_generic}
Suppose $\kappa$ is an inaccessible cardinal and $\mu<\kappa$ is regular with $\mu^{<\mu}=\mu$. Let $\P$ be a forcing notion such that $b:\kappa\to \P$ is a bijection and $\forces_\P\kappa=\mu^+$. Suppose $G*H\subseteq\P*\Code(G,\vec{E}_\kappa)$ is generic over $V$ and let $C=\bigcup H$. Then in $V[G*H]$, we have
\begin{enumerate}
\item For $i<\kappa$, $b(i)\in G$ iff $E_{W(i)}$ is nonstationary and $b(i)\notin G$ iff $E_{X(i)}$ is nonstationary.
\item For $i<\kappa$, $i\in C$ iff $E_{Y(i)}$ is nonstationary and $i\notin C$ iff $E_{Z(i)}$ is nonstationary.
\item There is a unique $V$-generic filter for $\P*\Code(G,\vec{E}_\kappa)$ (in $V[G*H]$).
\end{enumerate}
\end{lemma}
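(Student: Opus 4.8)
The plan is to prove (1) and (2) first, then derive (3). Each of (1) and (2) is a biconditional, and in each case one implication is essentially immediate from the definition of $\Code(G,\vec E_\kappa)$ while the converse is the substantial part. For the easy implications, first note that $C=\bigcup H$ is a club in $\kappa$ in $V[G*H]$: it is unbounded because any condition can be end-extended by a single regular cardinal (which lies in no $E_\alpha$), so $\{c\st\max c>\beta\}$ is dense for each $\beta<\kappa$; and it is closed by the usual argument, using that $\Code(G,\vec E_\kappa)$ is ${<}\kappa$-distributive over $V[G]$ (Lemma \ref{lemma_code_distributive}), so $\kappa$ stays regular. Then if $b(i)\in G$ every condition avoids $E_{W(i)}$, so $C\cap E_{W(i)}=\emptyset$ and $E_{W(i)}$ is nonstationary; similarly for $b(i)\notin G$ and $E_{X(i)}$. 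For (2), if $i\in C$, fixing $c\in H$ with $i\in c$ and $\max c>i$, a directedness argument with the ordering clause gives $C\cap E_{Y(i)}\subseteq\max c+1$, so $E_{Y(i)}$ is nonstationary; symmetrically $C\cap E_{Z(i)}$ is bounded when $i\notin C$. The remaining implications — the converses — all reduce to a single preservation claim.

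The key claim, proved in $V[G]$, is: if $\gamma<\kappa$ and $c_0\in\Code(G,\vec E_\kappa)$ satisfy [$\gamma=W(i)\Rightarrow b(i)\notin G$], [$\gamma=X(i)\Rightarrow b(i)\in G$], [$\gamma=Y(i)\Rightarrow i\leq\max c_0$ and $i\notin c_0$], and [$\gamma=Z(i)\Rightarrow i\in c_0$], then $c_0\forces_{\Code(G,\vec E_\kappa)}$ ``$E_\gamma$ is stationary''. The proof is a strengthening of the distributivity argument of Lemma \ref{lemma_code_distributive}. Since $\P$ is $\kappa$-c.c.\ and $E_\gamma$ is stationary in $V$, it is stationary in $V[G]$; and since $\mu^{<\mu}=\mu$ holds in $V[G]$ (as $\P$ is ${<}\mu$-closed), the ${<}\mu$-closed elementary submodels $N\elemsub H_\theta^{V[G]}$ of size $\mu$ containing the relevant parameters form a club in $\kappa$, so we may fix such an $N$ with $\delta:=N\cap\kappa\in E_\gamma$. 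Given $c\leq c_0$ and a name $\dot D$ for a club in $\kappa$, we build a descending $\mu$-sequence $\<c_\xi\st\xi<\mu\>$ through $N$ forcing ordinals into $\dot D$ cofinally below $\delta$, with lower bound $c^*=\bigcup_{\xi<\mu}c_\xi\cup\{\delta\}$ exactly as in Lemma \ref{lemma_code_distributive}; the crucial point is that $\delta\in E_\gamma$ and $\delta$ lies in no $E_\alpha$ for $\alpha\neq\gamma$, and the hypotheses on $(\gamma,c_0)$ — which pass to $c\leq c_0$ since $c$ end-extends $c_0$ — guarantee precisely that adjoining $\delta$ violates neither the conditions defining membership in $\Code(G,\vec E_\kappa)$ nor the ordering clause for $c^*\leq c$. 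Hence $c^*\leq c$ and $c^*\forces\check\delta\in\dot D$, so the conditions forcing $\dot D\cap E_\gamma\neq\emptyset$ are dense below $c_0$. I expect the bookkeeping here — tracking which of the four families of constraints could ``exclude'' $\gamma$, and verifying $c^*$ is a legitimate condition below $c$ in each case — to be the main obstacle; the rest is comparatively routine.

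Granting the claim, the converses follow: if $b(i)\notin G$, apply it with $\gamma=W(i)$ and any $c_0\in H$ (the $Y,Z$-clauses vacuous) to get $E_{W(i)}$ stationary in $V[G*H]$, and symmetrically for $E_{X(i)}$ when $b(i)\in G$; if $i\notin C$, pick $c_0\in H$ with $\max c_0>i$, so $i\notin c_0$, and apply it with $\gamma=Y(i)$; if $i\in C$, pick $c_0\in H$ with $i\in c_0$ and apply it with $\gamma=Z(i)$. This gives (1) and (2).

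For (3), parts (1) and (2) show that in $V[G*H]$ the sets $G=\{b(i)\st E_{W(i)}\text{ is nonstationary}\}$ and $C=\{i<\kappa\st E_{Y(i)}\text{ is nonstationary}\}$ are definable from parameters in $V$ (namely $b,\vec E_\kappa,W,X,Y,Z$). Moreover $H$ is recoverable from $G$ and $C$: a direct check using the directedness of $H$ and the ordering clause shows $c\in H$ iff $c\in\Code(G,\vec E_\kappa)$, $c=C\cap(\max c+1)$, and $C\cap E_{Y(j)}\subseteq\max c+1$ for every $j\in c$ while $C\cap E_{Z(j)}\subseteq\max c+1$ for every $j\leq\max c$ with $j\notin c$ — a condition absolute in $C$ between any two inner models containing $C$ and the $E_\alpha$'s. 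Hence $(G,H)$ is definable in $V[G*H]$ from parameters in $V$. Finally, let $K=(G_K,H_K)$ be any $V$-generic filter for $\P*\Code(G,\vec E_\kappa)$ lying in $V[G*H]$. Applying (1) and (2) to $K$, and using that a club of $\kappa$ in $V[K]$ is still a club in $V[G*H]$ (so nonstationarity is downward absolute), we get $G_K\subseteq G$; since both are $V$-generic for $\P$ and generic filters are maximal among proper filters, $G_K=G$. The same argument gives $\bigcup H_K\subseteq C$, and if some $i\in C\setminus\bigcup H_K$ then $E_{Z(i)}$ is stationary in $V[G*H]$ by (2) applied to $G*H$ yet nonstationary in $V[K]\subseteq V[G*H]$ by (2) applied to $K$, a contradiction; so $\bigcup H_K=C$. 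By absoluteness of the recovery formula, $H_K=H$, hence $K=(G,H)$, as required.
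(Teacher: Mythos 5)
Your proof is correct and follows essentially the same approach as the paper: the converse directions of (1) and (2) are established by building a $\mu$-length descending chain through an elementary submodel $N\elemsub H_\theta^{V[G]}$ with $N\cap\kappa\in E_\gamma$ and closing off at $\delta=N\cap\kappa$, and uniqueness in (3) follows from the upward absoluteness of nonstationarity between $V[K]$ and $V[G*H]$. Your explicit recovery formula for $H$ from $C$ (disposing of the residual possibility $G'=G$ and $\bigcup H'=C$ with $H'\neq H$) is a worthwhile detail that the paper's terse ``the rest of the cases are similar'' leaves implicit, but it does not change the underlying argument.
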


\begin{proof}
The proof is similar to \cite[Lemma 8]{MR2548481}. 

(1) It follows from the definition of extension in $\Code(G,\vec{E}_\kappa)$ that if $b(i)\in G$ then $E_{W(i)}$ is nonstationary. Conversely, we will prove that if $b(i)\notin G$ then $E_{W(i)}$ remains stationary in $V[G*H]$. Suppose $b(i)\notin G$ and that $c\forces \dot{D}\subseteq\kappa$ is club. It will suffice to find an extension $c_\infty\leq c$ with $c_\infty\forces \dot{D}\cap E_{W(i)}\neq\emptyset$. Working in $V[G]$, since $E_{W(i)}$ is a stationary subset of $\kappa=(\mu^+)^{V[G]}$, it follows that for some large enough regular cardinal $\theta$, we may let $<_\theta$ be a well-order of $H_\theta$ and find $N\elemsub (H_\theta,\in,<_\theta)$ such that 
\begin{itemize}
\item $c, \Code(G,\vec{E}_\kappa), E_{W(i)},\dot{D}\in N$
\item $|N|^{V[G]}=\mu$
\item $N\cap \kappa \in E_{W(i)}$ 
\item $N^{<\mu}\cap V[G]\subseteq N$
\end{itemize}
We have $E_{W(i)}\subseteq\kappa\cap\cof(\mu)^{V[G]}$ and thus, working in $V[G]$, we may fix a sequence of ordinals $\<\beta_i\st i<\mu\>$ which is increasing, continuous and cofinal in $\delta=N\cap\kappa$. Using the well-order $<_\theta$ and elementarity, we recusively construct a decreasing sequence of conditions $\<c_i\st i<\mu\>$ and a sequence of ordinals $\<\eta_i\st i<\mu\>$ such that $c_0=c$ and for each $i<\mu$ we have (1) $c_{i+1}\forces\eta_{i+1}\in \dot{D}$, (2) $\beta_i\leq\max(c_i),\eta_{i+1}$, (3) $\eta_i<\eta_{i+1}$ and (4) $c_i\in N$. At limit stages we make use of the facts that $\Code(G,\vec{E}_\kappa)$ is ${<}\mu$-closed in $V[G]$ and $N^{<\mu}\cap V[G]\subseteq N$. Thus $\delta=N\cap\kappa=\sup\{\max(c_i)\st i<\mu\}=\sup\{\eta_i\st i<\mu\}$. Let $c_\infty=\bigcup\{c_i\st i<\omega\}\cup\{\delta\}$. Since $\delta\in E_{W(i)}$ and $b(i)\notin G$, it follows that $c_\infty$ is a condition in $\Code(G,\vec{E}_\kappa)$ and that $c_\infty$ extends each $c_i$. Hence $c_\infty\forces\delta\in \dot{D}\cap E_{W(i)}$. This completes the proof of (1).

(2) is similar to (1). 

For (3), suppose that in $V[G*H]$ there is a $V$-generic filter for $\P*\Code(G,\vec{E}_\kappa)$, call it $G'*H'$. Then $V\subseteq V[G'*H']\subseteq V[G*H]$. Suppose $G'\neq G$. Without loss of generality, suppose that for some $i<\kappa$ we have $b(i)\in G'\setminus G$. Then by (1), it follows that $E_{W(i)}$ is nonstationary in $V[G'*H']$, but becomes stationary in $V[G*H]$, which is impossible. The rest of the cases for (3) are similar.
\end{proof}

With the above technique of coding a generic for a collapse forcing, we are ready to prove Theorem \ref{theorem_rigid_presaturated}; that is, we will show that if $\kappa$ is an almost-huge cardinal and $\mu<\kappa$ is regular, then there is a forcing extension in which there is a rigid presaturated ideal on $\mu^+$ and $\GCH$ holds.

\begin{proof}[Proof of Theorem \ref{theorem_rigid_presaturated}]
Suppose $j:V\to M$ is an elementary embedding with critical point $\kappa$ such that $\lambda=j(\kappa)$, $M^{<\lambda}\subseteq M$ and $j$ is the ultrapower by an almost-huge tower (see \cite{MR3343538}). Without loss of generality, assume $\GCH$ holds. Let $\mu<\kappa$ be a regular cardinal and let $\P=\P_\kappa$ be Magidor's variation of Kunen's universal collapse for forcing $\kappa=\mu^+$, as given in Definition \ref{definition_kunen_iteration} above. Let $\dot{\Q}=\dot{\Col}(\kappa,<\lambda)$ be a $\P$-name for the Levy-collapse below $\lambda$. Assume $G*H$ is generic for $\P*\dot{\Q}$ over $V$. Working in $V$, let $\vec{E}_\kappa$ be the sequence of stationary subsets of $\kappa$ in the definition of the coding forcing above. It follows that each set in the sequence $\vec{E}_\kappa$ remains stationary in $V[G*H]$ and the poset $\Code(G,\vec{E}_\kappa)$ is the same whether defined in $V[G]$ or $V[G*H]$. We will prove that if $K$ is generic for $\Code(G,\vec{E}_\kappa)$ over $V[G*H]$, then in $V[G*(H\times K)]$ there is a rigid presaturated ideal on $\mu^+$.

First we argue that the embedding $j$ can be generically extended to have domain $V[G*(H\times K)]$. Since the poset $\Code(G,\vec{E}_\kappa)$ is ${<}\mu$-closed and has size $\kappa$ there is a regular embedding $\Code(G,\vec{E}_\kappa)\to \Col(\mu,\kappa)$ and by property (\ref{absorb_collapse}) of the universal collapse $\P$ listed above, there is a regular embedding $\Col(\mu,\kappa)\to j(\P)/G*H$. Thus we may let $\hat{G}$ be generic for the quotient $j(\P)/(G*(H\times K))$ and extend the elementary embedding to $j:V[G]\to M[\hat{G}]$. By elementarity, $j(\P)$ is $\lambda$-c.c.\ and thus $M[\hat{G}]^{<\lambda}\cap V[\hat{G}]\subseteq M[\hat{G}]$.

Next we must lift $j$ to have domain $V[G*H]$. In $M[\hat{G}]$, for each $\alpha<\lambda$, $j[H\restrict\alpha]$ is a directed subset of $\Col(\lambda,{<}j(\lambda))$ of size $<\lambda$, thus $m_\alpha=\inf j[H\restrict\alpha]$ is a condition in $\Col(\lambda,{<}j(\lambda))$. In $V[\hat{G}]$, let $\<D_\alpha\st\alpha<\lambda\>$ enumerate the dense open subsets of $\Col(\lambda,{<}j(\lambda))$ that live in $M[\hat{G}]$. We inductively build a chain $\<q_\alpha\st\alpha<\lambda\>$ such that \[\hat{H}=\{q\in\Col(\lambda,{<}j(\lambda))\st(\exists\alpha<\lambda) q_\alpha\leq q\}\] is an $M[\hat{G}]$-generic filter and $m_\alpha\in \hat{H}$ for all $\alpha<\lambda$. Assume that we have constructed a sequence of conditions $\<q_i\st i<\alpha\>$ such that for each $i<\alpha$, $q_i\leq m_i$, $q_i$ is compatible with all $m_\beta$ and $q_i\in D_i$. Let $q_\alpha'=\inf\{q_i\st i<\alpha\}$. Then $q_\alpha'$ is compatible with all $m_\beta$. Let $\gamma<j(\lambda)$ be such that $D_\alpha\restrict\gamma=_{\text{def}}D_\alpha\cap\Col(\lambda,{<}\gamma)$ is dense in $\Col(\lambda,{<}\gamma)$ and $q_\alpha'\in D_\alpha\restrict\gamma$. Choose $q_\alpha\in D_\alpha\restrict\gamma$ below $q_\alpha'\land m_\gamma$. Then $q_\alpha$ is compatible with all $m_\beta$ since if $\beta\geq\gamma$, $m_\beta\restrict\gamma=m_\gamma$. This completes the construction of $\hat{H}$. Since $j[H]\subseteq\hat{H}$ the embedding extends to $j:V[G*H]\to M[\hat{G}*\hat{H}]$.

Now let $c^*=\bigcup j[K]\cup\{\kappa\}$. We will check that $c^*$ is a condition in $j(\Code(G,\vec{E}_\kappa))=\Code(j(G),j(\vec{E}_\kappa))^{M[\hat{G}]}=\Code(j(G),j(\vec{E}_\kappa))^{V[\hat{G}]}$ extending every element of $j[K]=K$. It suffices to show that $\kappa$ is not in any of the stationary sets on the sequence $j(\vec{E}_\kappa)=\<\bar{E}_\alpha\st \alpha<j(\kappa)\>$ where $\bar{E}_\alpha=(\cof(\eta_\alpha)\cap [\eta_\alpha^+,j(\kappa)))^V$. If $\alpha\neq\kappa$ then clearly $\kappa\notin \bar{E}_\alpha$. If $\alpha=\kappa$ then $\kappa\notin \bar{E}_\alpha=\bar{E}_\kappa=(\cof(\kappa)\cap[\kappa^+,j(\kappa)))^V$. Thus $c^*$ is a master condition. Let $\hat{K}$ be generic for $j(\Code(G,\vec{E}_\kappa))$ over $M[\hat{G}*\hat{H}]$ with $c^*\in\hat{K}$. Then $j$ lifts to $j:V[G*(H\times K)]\to M[\hat{G}*(\hat{H}\times\hat{K})]$.

Let $U$ be the ultrafilter on $\mathcal{P}(\kappa)^{V[G*(H\times K)]}$ induced by this extended embedding: $U=\{X\in\mathcal{P}(\kappa)^{V[G*(H\times K)]}\st \kappa \in j(X)\}$. Consider the natural commutative diagram
\[
\begin{tikzcd}
V[G*(H\times K)] \arrow{dr}{j_U} \arrow{r}{j} & M[\hat{G}*(\hat{H}\times\hat{K})]\\
 & V[G*(H\times K)]^\kappa/U \arrow{u}{k}
\end{tikzcd}
\]
where $j_U$ is the ultrapower embedding and $k([f]_U)=j(f)(\kappa)$. Since $k$ is an elementary embedding, it follows that the ultrapower $V[G*(H\times K)]^\kappa/U$ is well-founded and can thus be identified with its transitive collapse $N$.  Note that $\crit k > \kappa$.

Let us now show that $N=M[\hat{G}*(\hat{H}\times\hat{K})]$. Recall that the original embedding $j:V\to M$ is the ultrapower by an almost-huge tower, and thus $M$ is the direct limit of a directed system of $\alpha$-supercompactness embeddings $j_\alpha:V\to M_\alpha$ for $\alpha<\lambda$. Every member of $M_\alpha$ is of the form $j_\alpha(f)(j_\alpha[\alpha])$ for some function $f\in V$ with $\dom(f)=[\alpha]^{<\kappa}$. If $k_\alpha:M_\alpha\to M$ is the factor map such that $j=k_\alpha\circ j_\alpha$, then the critical point of $k_\alpha$ is above $\alpha$, so $k_\alpha(x)=k_\alpha[x]$ whenever $M_\alpha\models|x|\leq\alpha$. Since $M$ is the direct limit of this system of supercompactness embeddings it follows that for all $x\in M$ there is some $\alpha<\lambda$ and some function $f\in V$ such that 
\[x=k_\alpha([f])=k_\alpha(j_\alpha(f)(j_\alpha[\alpha]))=j(f)(k_\alpha(j_\alpha[\alpha]))=j(f)(j[\alpha]).\]
Let $\beta$ be any ordinal. There is some $\alpha$ with $\kappa\leq\alpha<\lambda$ and some $f\in V$ such that $\beta=j(f)(j[\alpha])$. Let $b':\kappa\to\alpha$ be a bijection in $V[G*(H\times K)]$. Then $\beta=j(f)(j(b')[\kappa])=k(j_U(f)(j_U(b')[\kappa]))$. Thus $\beta\in \range(k)$. This implies that $k$ does not have a critical point and thus $N=M[\hat{G}*(\hat{H}\times \hat{K})]$.

The forcing used to extend the embedding to have domain $V[G*(H\times K)]$ was $\R=_{\text{def}}j(\P)/(G*(H\times K))*(\Code(j(G),j(\vec{E}_\kappa))/c^*)$. We will use Foreman's Duality Theorem (Theorem \ref{theorem_duality_theorem} above) to show that $\R$ is forcing-equivalent to forcing with $\mathcal{P}(\kappa)/I$ where $I\in V[G*(H\times K)]$ is defined as
\[I=\{X\in\mathcal{P}(\kappa)^{V[G*(H\times K)]}\st 1 \forced_{\R} [\id]_{\dot{U}} \notin j(X)\}\]
where $\dot{U}$ is an $\R$-name for the ultrafilter on $\kappa$ derived from the generic embedding. We need to verify that the ultrapower $N$ satisfies the hypotheses of Foreman's theorem. 

Let us show that there is a regular embedding $e:\P*(\Col(\kappa,{<}\lambda)\times\Code(\dot{G},\vec{E}))\to j(\P)$ in the ultrapower $N$ of the form $j(\<e_{\alpha}\st\alpha<\kappa\>)(\kappa)$, where $\<e_{\alpha}\st\alpha<\kappa\>\in \break V[G*(H\times K)]$ is a sequence of regular embeddings. Fix an increasing sequence $\<\kappa_\alpha\st\alpha<\kappa\>$ of inaccessible cardinals which is cofinal in $\kappa$. By the absorption properties of the universal collapse $\P=\P_\kappa$, there exist regular embeddings $e_{\alpha}:\P_{\kappa_\alpha}*(\Col(\kappa_\alpha,{<}\kappa)\times\Code(\dot{G}_{\kappa_\alpha},\vec{E}_{\kappa_\alpha}))\to \P_\kappa$ where the forcing $\Col(\kappa_\alpha,{<}\kappa)\times\Code(\dot{G}_{\kappa_\alpha},\vec{E}_{\kappa_\alpha})$ is defined in $V^{\P_{\kappa_\alpha}}$ just as $\Col(\kappa,{<}\lambda)\times\Code(\kappa,\vec{E}_\kappa)$ was defined in $V^{\P_\kappa}$. It follows that $e$ is represented in the ultrapower $N$ as $j(\<e_{\alpha}\st\alpha<\kappa\>)(\kappa)$. Thus $N$ computes the quotient algebra from $\hat{G}=j(G)$ and $e$, and this is represented by a function with domain $\kappa$. For each $p\in j(\P)$, there is an ordinal $\alpha<\lambda$ and a function $f_p\in V$ such that $p=j(f_p)(j[\alpha])=j_\alpha(f_p)(j_\alpha[\alpha])$. Using bijections $b_0:\kappa\to ([\alpha]^{<\kappa})^V$ and $b_1:\kappa\to\alpha$ in $V[G*(H\times K)]$, we can build a function $f'_p:\kappa\to \P$ that represents $p$ in the ultrapower $N$.  Thus, the hypotheses of Foreman's Duality Theorem are met.

By Lemma \ref{lemma_unique_generic}, there is in $V[\hat{G}*(\hat{H}\times\hat{K})]$ a unique generic for $\P_\lambda*\Code(j(G),j(\vec{E}_\kappa))$. Since there is a dense embedding $\mathcal{P}(\kappa)/I\to j(\P)/(G*(H\times K))*\Code(j(G),j(\vec{E}_\kappa))/c^*$, $\mathcal{P}(\kappa)/I$ is rigid. Since $\P_\lambda*\Code(j(G),j(\vec{E}_\kappa))$ preserves $\kappa^+$, it follows that $I$ is presaturated. Thus, in $V[G*(H\times K)]$ there is a rigid presaturated ideal on $\kappa=\mu^+$ and $\GCH$ holds. This completes the proof of Theorem \ref{theorem_rigid_presaturated}.
\end{proof}

In order to prove Theorem \ref{theorem_rigid_saturated} we will make use of certain coherent $\diamondsuit_{\mu,\lambda}$-sequences added by forcing of the form $\Add(\mu)*\dot{\P}$ where $\Add(\mu)$ is the forcing to add a Cohen subset to a regular cardinal $\mu$ and $\dot{\P}$ is an $\Add(\mu)$-name for ${<}\mu$-closed forcing. Recall that $\diamond_{\mu,\lambda}$ holds if there is a sequence $\<a_z\st z\in [\lambda]^{<\mu}\>$ such that for every $X\subseteq\lambda$ the set $\{z\in [\lambda]^{<\mu}\st X\cap z=a_z\}$ is stationary.  We use a stronger principle we call $\blacklozenge_{\mu,\lambda}$, which states that there is a sequence $\<a_z\st z\in [\lambda]^{<\mu}\>$ such that for all $X \subseteq \lambda$, all club $C \subseteq [\lambda]^{<\mu}$, and all $\alpha < \mu$, there is a strictly $\subset$-increasing continuous sequence $\< z_i : i < \alpha \>$ contained in $C$ such that for all $z_i$, $X \cap z_i = a_{z_i}$.

\begin{lemma}\label{lemma_forcing_diamonds}
If $\mu$ is a regular cardinal and $\mu<\lambda$, then after forcing with $\Add(\mu)*\dot{\P}$ where $\dot{\P}$ is an $\Add(\mu)$-name for a ${<}\mu$-closed forcing poset, there exists a $\blacklozenge_{\mu,\lambda}$-sequence.
\end{lemma}

\begin{proof}
Let $g*G$ be generic for $\Add(\mu)*\dot{\P}$ over $V$. We may view $g$ as a function $g:\mu\to 2$ and we will identify each $z\in [\lambda]^{<\mu}$ with a function $\ot(z)\to\lambda$ enumerating its elements in increasing order; in other words, $z(\alpha)$ denotes the $\alpha$-th element of $z$ where $\alpha<\ot(z)$. For each $z\in[\lambda]^{<\mu}$ with $z\cap\mu\in \mu$, we define $a_z=\{z(\beta)\st \beta<\ot(z) \land g(z\cap\mu+\beta)=1\}$. We will show that 
\[\forces_{\Add(\mu)*\dot{\P}} (\forall X\subseteq \lambda) \{z\in[\lambda]^{<\mu}\st X\cap z=\dot{a}_z\}\text{ is stationary.}\]

Let $\dot{X}$ and $\dot{C}$ be $\Add(\mu)*\dot{\P}$-names and $(p_0,q_0)\in \Add(\mu)*\dot{\P}$ be such that 
\[(p_0,\dot{q}_0)\forces \text{$\dot{X}\subseteq\lambda$ $\land$ ($\dot{C}\subseteq [\lambda]^{<\mu}$ is club)}.\]
We inductively construct a decreasing sequence of conditions $\<(p_\alpha,\dot{q}_\alpha)\st \alpha<\mu\>$ below $(p_0,\dot{q}_0)$ and an increasing sequence $\<z_\alpha\st\alpha<\mu\>$ of elements of $[\lambda]^{<\mu}$ as follows.
\begin{enumerate}
\item For each $\alpha<\mu$ both $\dom(p_\alpha)$ and $z_\alpha\cap\mu$ are ordinals.
\item If $\alpha=\xi+1$ is a successor stage in the construction we choose $(p_{\xi+1},\dot{q}_{\xi+1})$ and $z_{\xi+1}$ such that 
\begin{enumerate}
\item $(p_{\xi+1},\dot{q}_{\xi+1})$ decides $\dot{X}\cap \check{z}_\xi$ and forces $z_{\xi+1}\in \dot{C}$,
\item $\dom(p_\xi)\subseteq z_{\xi+1}\cap\mu$ and $\ot(z_\xi) \cdot 2 \subseteq\dom(p_{\xi+1})$

\end{enumerate}
\item If $\alpha$ is a limit, then letting $r_\alpha=\bigcup_{\beta<\alpha}p_\beta$ and $z_\alpha=\bigcup_{\beta<\alpha} z_\beta$, it follows from $(2)(b)$ that $\dom(r_\alpha)=z_\alpha\cap\mu$. We choose $(p_\alpha,\dot{q}_\alpha)$ such that
\begin{enumerate}
\item $\dom(p_\alpha)=z_\alpha\cap\mu+\ot(z_\alpha)$
\item $p_\alpha\restrict(z_\alpha\cap\mu)=r_\alpha$ and $p_\alpha(z_\alpha\cap\mu+\beta)=1$ iff $q_\alpha\forces z_\alpha(\beta)\in\dot{X}$.
\end{enumerate}
\end{enumerate}
Let $\gamma<\mu$ be a limit ordinal and let $\<\delta_\beta\st \beta\leq\gamma\>$ enumerate the first $\gamma+1$ limit ordinals. Then it follows that $(p_{\delta_\gamma},\dot{q}_{\delta_\gamma})\forces$ $\{z_{\delta_\beta}\st\beta\leq\gamma\}\subseteq\dot{C}$ $\land$ $(\forall\beta<\gamma)$ $\dot{X}\cap z_{\delta_\beta}=\dot{a}_{z_{\delta_\beta}}$.\end{proof}

There is a natural map $\pi_{\lambda,\lambda'}:[\lambda]^{<\mu}\to [\lambda']^{<\mu}$ where $\lambda'<\lambda$ defined by $\pi_{\lambda,\lambda'}(z)=z\cap\lambda'$. We now show that the $\blacklozenge_{\mu,\lambda}$-sequences defined in the previous lemma are coherent with respect to the projection maps $\pi_{\lambda,\lambda'}$.

\begin{lemma}\label{lemma_coherence}
Suppose $\mu$ is a regular cardinal, $\dot{\P}$ is an $\Add(\mu)$-name for a ${<}\mu$-closed forcing poset and suppose $g*G\subseteq\Add(\mu)*\dot{\P}$ is generic over $V$. Working in $V[g*G]$, for each $\lambda>\mu$ let $\vec{a}_\lambda=\<a_z^\lambda\st z\in [\lambda]^{<\mu}\>$ denote the $\blacklozenge_{\mu,\lambda}$-sequence defined in the proof of Lemma \ref{lemma_forcing_diamonds} and for each $X\subseteq\lambda$, let $S_X=\{z\in[\lambda]^{<\mu}\st X\cap z=a_z^\lambda\}$ denote the set on which $X$ is anticipated by $\vec{a}_\lambda$. The $\blacklozenge_{\mu,\lambda}$-sequences $\vec{a}_\lambda$ are coherent in the sense that if $X\subseteq\lambda$ and $\lambda'<\lambda$, then we have $S_{X\cap\lambda'}= \pi_{\lambda,\lambda'}[S_X]$.
\end{lemma}
\begin{proof} The $\blacklozenge_{\kappa,\lambda}$-sequences $\vec{a}_\lambda=\<a_z^\lambda\st z\in[\lambda]^{<\mu}\>$ are defined in the proof of Lemma \ref{lemma_forcing_diamonds}, by letting $a_z^\lambda=\{z(\beta)\st \beta<\ot(z) \land g(z\cap\mu+\beta)=1\}$; in other words, $a_z^\lambda$ is the subset of $z$ obtained by looking at the Cohen generic $g$ restricted to $[z\cap\mu, z\cap\mu+\ot(z))$. Hence it follows that for $\lambda'<\lambda$ and $z\in[\lambda]^{<\mu}$ we have (1) $a_z^\lambda\cap\lambda' = a_{z\cap\lambda'}^{\lambda'}$ and (2) $a_{z\cap\lambda'}^\lambda=a_{z\cap\lambda'}^{\lambda'}$. Now let us prove that $S_{X\cap\lambda'}=\pi_{\lambda,\lambda'}[S_X]$.

($\supseteq$) If $z'=z\cap\lambda'$ for some $z\in[\lambda]^{<\mu}$ with $X\cap z=a_z^\lambda$, then we have $(X\cap\lambda')\cap z'= (X\cap z)\cap\lambda'=a_z^\lambda\cap\lambda'=a_{z'}^{\lambda'}$ (by (1)). Thus $z'\in S_{X\cap\lambda'}$.

($\subseteq$) Suppose $z'\in S_{X\cap\lambda'}$. Then $z'\in[\lambda]^{<\mu}$ and $(X\cap\lambda')\cap z' = a_{z'}^{\lambda'}=a_{z'}^\lambda$ (by (2)). Thus $z'\in \{z\cap\lambda'\st z\in S_X\}$.
\end{proof}

We are now ready to prove the special case of Theorem \ref{theorem_rigid_saturated} in which $\mu$ is not the successor of a singular cardinal.  That case requires a bit more care and will be dealt with afterwards.

\begin{theorem}\label{theorem_rigid_saturated_mu_not_succ_sing}
Suppose $\GCH$ and $\kappa$ is an almost-huge cardinal and $\mu<\kappa$ is an uncountable regular cardinal which is not the successor of a singular cardinal. Then there is a ${<}\mu$-distributive forcing extension in which there is a rigid saturated ideal on $\mu^+$ and $\GCH$ holds.
\end{theorem}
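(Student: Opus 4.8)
The plan is to mirror the proof of Theorem \ref{theorem_rigid_presaturated}, replacing the Levy collapse $\Col(\kappa,{<}\lambda)$ above $\kappa$ with Kunen's universal collapse $\mathbb{K}$ so as to obtain a \emph{saturated} rather than merely presaturated ideal, while being careful that the coding forcing $\Code(G,\vec E_\kappa)$ now needs a $\blacklozenge_\kappa$-sequence to survive the passage to a genuinely $\kappa^+$-c.c.\ quotient. Concretely: assume $\GCH$, fix an almost-huge embedding $j\colon V\to M$ with $\crit j=\kappa$, $\lambda=j(\kappa)$, $M^{<\lambda}\subseteq M$, arising from an almost-huge tower so that $M$ is a direct limit of $\alpha$-supercompactness embeddings $j_\alpha$. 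Let $\P=\P_\kappa$ be Magidor's variation of Kunen's universal collapse forcing $\kappa=\mu^+$ (Definition \ref{definition_kunen_iteration}), and now take $\dot{\mathbb K}$ to be a $\P$-name for the \emph{universal} Kunen-style iteration between $\kappa$ and $\lambda$ rather than the plain Levy collapse; because $\mu$ is not the successor of a singular cardinal, the active stages of this upper iteration all have the right form, and Cox's Theorem \ref{theorem_cox_kunen_collapse} (applied with $\kappa$ weakly compact, indeed almost huge) guarantees that $\P_\lambda$ is $\lambda$-Knaster, hence $\lambda$-c.c. Force with $\P*\dot{\mathbb K}$ to get $G*H$, then with $\Code(G,\vec E_\kappa)$ over $V[G*H]$ to get $K$; the target model is $V[G*(H\times K)]$.

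The lifting of $j$ proceeds exactly as in Theorem \ref{theorem_rigid_presaturated}: use property (\ref{absorb_collapse}) of the universal collapse together with the regular embedding $\Code(G,\vec E_\kappa)\to\Col(\mu,\kappa)$ (valid since $\Code$ is ${<}\mu$-closed of size $\kappa$, by Lemma \ref{lemma_code_distributive}) to find a generic $\hat G$ for the quotient $j(\P)/(G*(H\times K))$ and lift to $j\colon V[G]\to M[\hat G]$; lift through the upper universal collapse by the usual master-condition/genericity bookkeeping over $M[\hat G]$ (the relevant quotients are sufficiently closed); form the master condition $c^*=\bigcup j[K]\cup\{\kappa\}$ and check $\kappa\notin\bar E_\alpha$ for every $\alpha<j(\kappa)$ to lift through the coding; and verify $N=M[\hat G*(\hat H\times\hat K)]$ via the same surjectivity-of-$k$ argument using the supercompactness representation $x=j(f)(j[\alpha])$. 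Then apply Foreman's Duality Theorem \ref{theorem_duality_theorem}, exhibiting $e$ as $j(\langle e_\alpha\st\alpha<\kappa\rangle)(\kappa)$ where each $e_\alpha$ is a regular embedding of $\P_{\kappa_\alpha}*(\mathbb{K}_{[\kappa_\alpha,\kappa)}\times\Code)$ into $\P_\kappa$ coming from the universality of the collapse, and representing arbitrary $p\in j(\P)$ by functions on $\kappa$; this yields an ideal $I\in V[G*(H\times K)]$ with $\mathcal P(\kappa)/I$ densely embedding into the tail forcing $j(\P)/(G*(H\times K))*(\Code(j(G),j(\vec E_\kappa))/c^*)$.

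Rigidity of $I$ then follows from Lemma \ref{lemma_unique_generic}(3) as before: in $V[\hat G*(\hat H\times\hat K)]$ there is a \emph{unique} $V$-generic for $\P_\lambda*\Code(j(G),j(\vec E_\kappa))$, so the tail forcing is rigid and hence so is $\mathcal P(\kappa)/I$. The genuinely new point — and the main obstacle — is \emph{saturation} rather than presaturation: I must show $\mathcal P(\kappa)/I$, equivalently $\P_\lambda/(G*(H\times K))*(\Code/c^*)$ as computed in the appropriate model, has the $\kappa^+$-c.c. By Corollary \ref{theorem_baumgartner_taylor} this reduces to showing $\Vdash_I j(\P_\lambda*\Code)$ is $\kappa^+$-c.c., i.e.\ (after the lift) that $\P_\lambda$ and the coding forcing at the image side are $\lambda$-c.c.\ in the relevant extension; Cox's theorem handles $\P_\lambda$, while for the coding forcing $\Code(j(G),j(\vec E_\kappa))$ one needs it to be $\lambda$-c.c.\ in $M[\hat G*\hat H]$ — this is \emph{not} automatic from ${<}\lambda$-distributivity, and is precisely where the $\blacklozenge_\kappa$-machinery (Add$(\kappa)$ introducing a $\blacklozenge_\kappa$-sequence, preserved by $\kappa$-closed forcing) enters: one arranges the stationary sets $\vec E_\kappa$ to be guided by a $\blacklozenge$-sequence so that at the image side the coding forcing becomes a ``guessing'' forcing that is $\lambda$-c.c., or alternatively one argues that $\Code$ is actually $\kappa^+$-Knaster under $\GCH$ plus the fat-diamond hypothesis. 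Finally, $\GCH$ in $V[G*(H\times K)]$ is maintained because $\P$ forces $\GCH$, $\mathbb{K}$ is $\lambda$-c.c.\ of size $\lambda$ so adds no new subsets of $\kappa$ beyond what $\GCH$ allows, and $\Code$ is ${<}\kappa$-distributive; and the combined forcing below $\kappa$ is ${<}\mu$-distributive as required, completing the proof that in $V[G*(H\times K)]$ there is a rigid saturated ideal on $\kappa=\mu^+$ with $\GCH$.
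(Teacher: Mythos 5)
The proposal has a genuine gap at the crucial point: you cannot obtain saturation by simply reusing the club-shooting coding forcing $\Code(G,\vec E_\kappa)$ from the proof of Theorem~\ref{theorem_rigid_presaturated}, and the vague suggestions you offer at the end (guiding $\vec E_\kappa$ by a $\blacklozenge$-sequence, or arguing $\Code$ is $\kappa^+$-Knaster) cannot be made to work. The forcing $\Code(G,\vec E_\kappa)$ adds a single club subset of $\kappa$ by closed bounded approximations, and such a forcing intrinsically has antichains of size $2^{<\kappa}$: for instance, any two conditions with the same maximum but differing below that maximum are incompatible, and under $\GCH$ there are $\kappa$ of these for a single maximum of size $\mu$, and $\kappa^+$ of them altogether (e.g.\ the singletons $\{\beta\}$ for $\beta<\kappa$ are pairwise incompatible). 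On the image side, $j(\Code)$ adds a club to $j(\kappa)=\lambda=\kappa^+$ and has a maximal antichain of size $\lambda$, so the resulting quotient $\mathcal P(\kappa)/I$ is never $\kappa^+$-c.c.\ regardless of what stationary sets you shoot through. This is precisely why the ideal in Theorem~\ref{theorem_rigid_presaturated} is only presaturated. Replacing the upper Levy collapse by a Kunen-style iteration does nothing to address this, since the antichain lives in the coding component.

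The paper's resolution is a genuinely new coding forcing $\mathbb C$, not a variant of $\Code$. Instead of shooting one club through $\kappa$, $\mathbb C$ consists of partial functions $p:\kappa\to\mathcal P(\mu)$ with $|p|<\mu$, each $p(\alpha)$ a closed bounded subset of $\mu$, so that $\kappa$-many clubs are added, each a subset of $\mu$. The fat stationary sets $S^\beta_\alpha$ that get killed or preserved are subsets of $\mu$ (not $\kappa$), derived from a $\blacklozenge_\mu$-sequence (not $\blacklozenge_\kappa$) via the sets $X_{b(\alpha,\beta)}\in\mathcal P(\mu)^{V[G]}$, which in turn need the modified iterands $\Col(\alpha,{<}\kappa)\times\Col(\mu,\alpha)$ in $\mathbb P_\kappa$ so that generic surjections $g_\alpha:\mu\to\alpha$ are available to encode $G\cap V_\alpha$ as a subset of $\mu$. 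Because conditions in $\mathbb C$ have support of size $<\mu$ over a $\kappa=\mu^+$-sized index set, a $\Delta$-system argument shows $\mathbb C$ is $\kappa$-c.c., and then Theorem~\ref{dualitynicecase} (the $\kappa$-c.c.\ duality theorem, not the general one you cite) gives that the generated ideal $\bar I$ is saturated iff $j(\mathbb C)$ is $\kappa^+$-c.c.\ in the generic ultrapower, which the same $\Delta$-system argument supplies. The real work is then Lemma~\ref{satlem}, showing $\mathbb C$ is ${<}\mu$-distributive, adds clubs, and controls stationarity of the $S^\beta_\alpha$, which is where the hypothesis that $\mu$ is not the successor of a singular (giving $\nu^{<\nu}<\mu$ for $\nu<\mu$) enters. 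None of this is present in your proposal; the structure you describe would at best reproduce presaturation.
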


\begin{proof}

Suppose $\kappa$ is an almost-huge with target $\lambda$, and let $\mu < \kappa$ be uncountable and such that $\mu^{<\mu} = \mu$. We must show that there is a forcing extension in which $\GCH$ holds and there is a rigid saturated ideal on $\mu^{+}$.  We first let $g$ be $\Add(\mu)$-generic over the ground model $V_0$, and let $V = V_0[g]$.  In $V$, fix a bijection $b : \kappa  \to V_\kappa$.

If $\mathbb P$ is the ${<}\mu$-closed Kunen collapse of $\kappa$ to $\mu^+$, we have $j(\mathbb P) \cap V_\kappa = \mathbb P$.  By construction, $\mathbb P * \Col(\kappa,{<}j(\kappa)) \lhd j(\mathbb P)$.  By same arguments as for Theorem \ref{theorem_rigid_presaturated}, if $G * H \subseteq \mathbb P * \Col(\kappa,{<}j(\kappa))$, then in $V[G*H]$, $\kappa = \mu^+$, and there is a $\kappa^+$-saturated ideal on $\kappa$ with quotient isomorphic to $\mathcal B(j(\mathbb P)/(G*H))$.  As in the proof of Theorem \ref{theorem_rigid_presaturated}, a generic embedding for this ideal will always extend the ground model almost-huge embedding.

For $\delta \in [\mu,\kappa)$, let $\langle a^\delta_z : z \in [\delta]^{<\mu} \rangle$ be the $\blacklozenge_{\mu,\delta}$ sequence given by the above lemmas.  In a slight abuse of notation, put $S^\delta_\alpha = \{ z : a^\delta_z = z \cap \{  \alpha  \} \}$ for $\alpha,\delta < \kappa$.    
Note that if $\alpha \not= \beta$, then $S^\delta_\alpha \cap S^\delta_\beta$ only contains those $ z \in [\delta]^{<\mu}$ for which $z \cap \{ \alpha,\beta \} = \emptyset$.

We now define the forcing which codes $G$ into destroying/preserving the stationarity of the $S^\delta_\alpha$.  Working $V[G]$, since all cardinals in $[\mu,\kappa)$ are collapsed to $\mu$, for each $\delta \in [\mu,\kappa)$, choose a continuous, increasing, cofinal sequence $\vec{z}(\delta)=\langle z^\delta_i : i < \mu \rangle \subseteq [\delta]^{<\mu}$.  Let $\mathbb C(G)$ be the collection of partial functions $p : \kappa \to \mathcal P(\mu)$ with the following properties:
\begin{enumerate}
\item $|p| < \mu$.
\item For all $\alpha < \kappa$, $p(\alpha)$ is a closed bounded subset of $\mu \setminus 1$.
\item For all $\alpha< \kappa$, if $b(\alpha) \in G$, then $S^{\mu \cdot \alpha + \mu}_{\mu \cdot \alpha} \cap \{ z^{\mu \cdot \alpha + \mu}_i : i \in p(\alpha) \} = \emptyset$. 
\item For all $\alpha< \kappa$, if $b(\alpha) \notin G$, then $S^{\mu \cdot \alpha + \mu}_{\mu \cdot \alpha + 1} \cap \{ z^{\mu \cdot \alpha + \mu}_i : i \in p(\alpha) \} = \emptyset$. 
\end{enumerate}
We let $q \leq p$ when:
\begin{enumerate}
\item $\dom p \subseteq \dom q$.
\item For all $\alpha \in \dom p$,  $q(\alpha) \cap (\max p(\alpha)+1) = p(\alpha)$.
\item If $\beta \in p(\alpha)$, then $S^{\mu \cdot \alpha + \mu}_{\mu \cdot \alpha + 2 \cdot \beta} \cap \{ z^{\mu \cdot \alpha + \mu}_i : i \in q(\alpha) \setminus p(\alpha) \} = \emptyset$. 
\item If $\beta \in [1,\max p(\alpha)] \setminus p(\alpha)$, then $S^{\mu \cdot \alpha + \mu}_{\mu \cdot \alpha + 2 \cdot \beta + 1} \cap \{ z^{\mu \cdot \alpha + \mu}_i : i \in q(\alpha) \setminus p(\alpha) \} = \emptyset$. 
\end{enumerate}

Note that since $\mu\cdot\alpha+\beta<\mu\cdot\alpha+\mu$ for all $\beta<\mu$, it follows that $S^{\mu\cdot\alpha+\mu}_{\mu\cdot\alpha+\xi}\cap S^{\mu\cdot\alpha+\mu}_{\mu\cdot\alpha+\zeta}$ is nonstationary for all $\xi\neq\zeta$ less than $\mu$. A standard $\Delta$-system argument establishes that $\mathbb C(G)$ is $\kappa$-c.c., and since $| \mathbb C(G) | = \kappa$, $\mathbb C(G)$ preserves $\GCH$ for cardinals $\geq \mu$.  The following is the key technical claim towards showing that $\mathbb C(G)$ forces the existence of a rigid saturated ideal, and it also shows that cardinals and $\GCH$ are preserved below $\mu$.

\begin{nonGlobalClaim}
\label{satlem}
Let $K \subseteq \mathbb C(G)$ be generic over $V[G]$. 
\begin{enumerate}
\item For all $\alpha$, $C_\alpha = \bigcup_{p \in K} p(\alpha)$ is club in $\mu$.
\item $\mathbb C(G)$ is ${<}\mu$-distributive.
\item For all $\alpha < \kappa$, $b(\alpha) \in G \Leftrightarrow S^{\mu \cdot \alpha + \mu}_{\mu \cdot \alpha}$ is nonstationary $\Leftrightarrow S^{\mu \cdot \alpha + \mu}_{\mu \cdot \alpha + 1}$ is stationary.
\item For all $\alpha < \kappa$ and $\beta < \mu$, $\beta \in C_\alpha \setminus 1 \Leftrightarrow S^{\mu \cdot \alpha + \mu}_{\mu \cdot \alpha + 2 \cdot \beta} $ is nonstationary $\Leftrightarrow S^{\mu \cdot \alpha + \mu}_{\mu \cdot \alpha + 2 \cdot \beta +1}$  is stationary.
\end{enumerate}
\end{nonGlobalClaim}
\begin{proof}
For (1), let $p \in \mathbb C(G)$, $\alpha < \kappa$, and $\xi < \mu$ be arbitrary.  Let $\gamma < \mu$ be greater than $2 \cdot \beta + 1$ for all $\beta \in p(\alpha)$.  $[\mu \cdot \alpha + \mu]^{<\mu} \setminus \bigcup_{i<\gamma} S^{\mu \cdot \alpha + \mu}_i$ is stationary, so let $z^{\mu \cdot \alpha + \mu}_\zeta$ be in this set, where $\zeta > \left(\bigcup p(\alpha)\right)\cup\xi$.  Then $(p \setminus (\alpha,p(\alpha))) \cup \{ (\alpha, p(\alpha) \cup \{ \zeta \} ) \}$ is a condition stronger than $p$.  This shows that $C_\alpha$ is forced to be unbounded.  It is closed because all initial segments are closed.

We will show (2) and (3) with one construction.  Let $\nu<\mu$ be a regular cardinal, $\dot f$ a name for a function from $\nu$ to the ordinals,  $\dot C$ a name for a club in $\mu$, $p_0 \in \mathbb C(G)$, and $\xi < \kappa$.  Let $\theta$ be a sufficiently large regular cardinal, and let  $\langle M_\alpha : \alpha < \mu \rangle$ be a continuous increasing sequence of elementary submodels of $H_{\theta}$, each of size $<\mu$ and having transitive intersection with $\mu$, with $\{\nu,\mu,\dot f,\dot C,p_0,\mathbb C(G),\xi,\vec{Z}\} \in M_0$ where $\vec{Z}=\<z^\delta_i\st \delta<\kappa,i<\mu\>$, and such that for all $\alpha < \mu$, then $M_\alpha \in M_{\alpha+1}$ and $M_{\alpha+1}^{<\nu} \subseteq M_{\alpha+1}$.

Let $D \subseteq \mu$ be the club set of $\alpha$ where $M_\alpha \cap \mu = \alpha$.  Let $T$ be $S^{\mu \cdot \xi + \mu}_{\mu \cdot \xi + n}$, where we let $n = 1$ if $b(\xi) \in G$ and $n= 0$ if $b(\xi) \notin G$.  Since $\<a^{\mu\cdot\xi+\mu}_z\st z\in[\mu\cdot\xi+\mu]^{<\mu}\>$ is a $\blacklozenge_{\mu,\mu\cdot\xi+\mu}$-sequence, we may let $\langle \alpha_i : i \leq \nu \rangle$ be a continuous increasing sequence contained in $D$ such that $\{ z^{\mu \cdot \xi + \mu}_{\alpha_i} : i \leq \nu \} \subseteq T$.  Choose a descending chain of conditions below $p_0$ as follows:  Given $p_i \in M_{\alpha_{i+1}}$, let $p_{i+1} \in M_{\alpha_{i+1}}$ be such that $\alpha_ i \leq \max p_{i+1}(\gamma)$ for all $\gamma \in \dom(p_{i+1})$.  This is possible by the argument for (1) and elementarity.  Also, let $p_{i+1}$ decide $\dot f(i)$ and force $\beta_{i+1} \in \dot C$ for some $\beta_{i+1}\geq \alpha_i$.

If $i$ is a limit, let $p_i(\gamma) = \bigcup_{k<i} p_k(\gamma) \cup \{ \alpha_i \}$ for each $\gamma \in \bigcup_{k<i} \dom p_k$, and define $\beta_i = \sup_{k<i} \beta_k$.  To show this is a condition, let $\eta < \kappa$ and $\beta < \mu$ be in $M_{\alpha_i}$, and let $S' = S^{\mu \cdot \eta + \mu}_{\mu \cdot \eta + \beta}$.  Note that for all $\delta \in [\mu,\kappa)\cap M_{\alpha_i}$, since $M_{\alpha_i}$ knows $\langle z^\delta_k : k < \mu \rangle$ is club in $[\delta]^{<\mu}$, $z^\delta_{\alpha_i} = M_{\alpha_i} \cap \delta$.  So the $z^\delta_{\alpha_i}$ project to one another under the natural maps defined just before Lemma \ref{lemma_coherence}.  Suppose $\mu \cdot \xi + n \not= \mu \cdot \eta + \beta$.
\begin{itemize}
\item \underline{Case 1:} $\eta \leq \xi$.
Let $T' = S^{\mu \cdot \eta + \mu}_{\mu \cdot \xi + n}$. By Lemma \ref{lemma_coherence}, $T'=\pi_{\mu\cdot\xi+\mu,\mu\cdot\eta+\mu}[T]$.  Since $T' \cap S' \cap \{ z : \mu \cdot \eta + \beta \in z \} = \emptyset$, and $\mu \cdot \eta + \beta \in z^{\mu \cdot \eta + \mu}_{\alpha_i} \in T'$, we have $z^{\mu \cdot \eta + \mu}_{\alpha_i} \notin S'$.
\item \underline{Case 2:} $\eta > \xi$.
Let $T' = \pi_{\mu \cdot \eta + \mu,\mu \cdot \xi + \mu}^{-1}[T]$.  If $z^{\mu \cdot \eta + \mu}_{\alpha_i} \in T' \cap S'$, then applying Lemma \ref{lemma_coherence}, $z^{\mu \cdot \xi + \mu}_{\alpha_i} \in T \cap S^{\mu \cdot \xi + \mu}_{\mu \cdot \eta + \beta}$, which is false since $\mu \cdot \xi + n \in z^{\mu \cdot \xi + \mu}_{\alpha_i}.$
\end{itemize}
In either case, we may add $\alpha_i$ to the closed bounded set at coordinate $\eta$ without violating the requirements for being in $\mathbb C(G)$.
We have that $p_i \Vdash \beta_i = \sup_{k <i} \beta_k \in \dot C$.  The construction continues because for each limit $i < \nu$, $\langle p_k : k \leq i \rangle \in M_{\alpha_{i+1}}$.  In the end, $p_{\alpha_\nu}$ is a condition deciding $\dot f$ and forcing $\alpha_\nu \in \dot C$ and $z^{\mu \cdot \xi + \mu}_{\alpha_\nu} \in T$.

To show (5), let $0<\beta < \mu$, $\xi < \kappa$, $q_0 \in \mathbb C(G)$, and let $\dot C$ be a name for a club.  Take $q_1 \leq q_0$ such that $\max(q_1(\xi)) > \beta$.  Let $\delta = 2 \cdot \beta + 1$ if $\beta \in q_1(\xi)$, and $\delta = 2 \cdot \beta$ if $\beta \notin q_1(\xi)$.  Construct a sequence of models as before, and take the analogous club $D$.  Let $\langle \alpha_i : i \leq \omega \rangle$ be a continuous increasing sequence contained in $D$ such that $\{ z^{\mu \cdot \xi + \mu}_{\alpha_i} : i \leq \omega \} \subseteq S^{\mu \cdot \xi + \mu}_{\delta}$.  As above, we may choose a descending chain of conditions $q_0 \leq q_1 \leq q_2 \leq ... \leq q_\omega$ such that $q_\omega \Vdash \alpha_\omega \in \dot C$ and $z^{\mu \cdot \xi + \mu}_{\alpha_\omega} \in S^{\mu \cdot \xi + \mu}_{\delta}$.
\end{proof}

\begin{nonGlobalClaim}
If $G * K$ is $\mathbb P * \dot{\mathbb C(G)}$-generic, then there is no other $\mathbb P * \dot{\mathbb C(G)}$-generic filter in $V[G][K]$.
\end{nonGlobalClaim}
\begin{proof}
If $G' * K' \in V[G * K]$ is another $\mathbb P * \dot{\mathbb C}(\dot G)$-generic over $V$, then either $G \not= G'$ or $K \not= K'$.  In the first case, assume $b(\xi) \in G \triangle G'$.   Then for some $n < 2$, $S^{\mu \cdot \xi + \mu}_{\mu \cdot \xi + n}$ is nonstationary in $V[G' * K']$ but stationary in $V[G*K]$, which is impossible.
In the second case, if $G = G'$ but $K \not= K'$, let $(\alpha,\beta)$ be such that $\beta \in C_\alpha \triangle C'_\alpha$.  Then for some $n < 2$, $S^{\mu \cdot \alpha + \mu}_{\mu \cdot \alpha + 2 \cdot \beta + n}$ is nonstationary in $V[G * K']$, but stationary in $V[G * K]$, which again is impossible.
\end{proof}

The above claim gives us what we want, via the Duality Theorem.  By Theorem \ref{dualitynicecase}, if $\bar I$ denotes the ideal generated by $I$ in $V[G*H]^{\mathbb C(G)}$, then $\mathcal{B}( \mathbb{C}(G) * \dot{\mathcal{P}} (\kappa)/ \bar I) \cong \mathcal{B}( \mathcal P(\kappa)/I * j(\mathbb{C}(\dot G)))$.  Since we can carry out the $\Delta$-system argument for the $\kappa^+$-c.c.\ of  $j(\mathbb C(G))$ in $V[G*H]^{\mathcal P(\kappa)/I}$, $\bar I$ is forced to be saturated.  By applying the above claim to $j(\mathbb P * \dot{\mathbb C}(\dot G))$, we see that $\bar I$ is forced to be rigid, because 
\begin{align*}
j(\mathbb P * \dot{\mathbb C}(\dot G))	& \sim  j(\mathbb P) * j(\dot{\mathbb C}(\dot G) \\
						& \sim  \mathbb P * \Col(\kappa,{<}\lambda) * \frac{j(\mathbb P)}{\dot G * \dot H} * j(\dot{\mathbb C}(\dot G)) \\
						& \sim  \mathbb P * \Col(\kappa,{<}\lambda) * \mathcal P(\kappa)/I * j(\dot{\mathbb C}(\dot G))   \\
						& \sim  \mathbb P * \Col(\kappa,{<}\lambda) * \dot{\mathbb C}(\dot G) * \mathcal P(\kappa)/\bar{I}.  \\
\end{align*}
A nontrivial automorphism of $\mathcal P(\kappa)/ \bar I$ would give a forcing extension of $V$ with distinct generics for $j(\mathbb P * \dot{\mathbb C}(\dot G))$.
\end{proof}

Finally, we sketch the proof of Theorem \ref{theorem_rigid_precipitous}, which closely follows the above arguments.  Let $\kappa$ be measurable with normal measure $U$, and let $\mu < \kappa$ be regular.  Let $g \subseteq \Add(\mu)$ be generic over the ground model $V_0$, so that in $V = V_0[g]$, we have the coherent $\blacklozenge_{\mu,\delta}$-sequences, which are indestructible by ${<}\mu$-closed forcing.  Take a generic $G \subseteq \Col(\mu,{<}\kappa)$, and let $I$ be the ideal generated by the dual of $U$.  By the Duality Theorem, $\mathcal P(\kappa)/I$ is forcing-equivalent to $\Col(\mu,{<}j(\kappa))$.  We then force over this model with $\mathbb C(G)$.  The generated ideal will be rigid, and the key reason is the following:  If $H \subseteq \mathcal P(\kappa)/I$ is generic and $j : V[G] \to M \subseteq V[G][H]$ is the generic ultrapower embedding, then $j(\mathbb C(G))$ is the same whether defined in $M$ or $V[G][H]$, since it uses the same $\blacklozenge_{\mu,\delta}$-sequences.  Thus the stationarity of the relevant sets is absolute between $M$ and $V[G][H]$, even though $M$ is not $\mu$-closed.

\section{Near singular cardinals}\label{section_singular}

In this section, we show how to modify the previous arguments to obtain $\GCH$ along with rigid saturated ideals on double successors of singulars, which is the remaining case of Theorem \ref{theorem_rigid_saturated}.
We do not need to singularize any large cardinals, but only collapse our almost-huge $\kappa$ to be $\mu^+$, where $\mu$ is the successor of a singular in the ground model.  We then apply the same forcing $\mathbb C(G)$, but we must work harder to prove the version of Claim~\ref{satlem} without the assumption that $\nu^{{<}\nu} < \mu$ for $\nu < \mu$.  Our argument is based on the proof of Theorem 2 in \cite{MR0716625}.

Let $\mu = \nu^+$, and assume we have forced the coherent $\blacklozenge_{\mu,\delta}$-sequences as before.  Fix a function $f : \mu \times \mu \to \nu$ such that  $f(\alpha, \cdot) \restriction \alpha$ is an injection of $\alpha$ into $\nu$ for each $\alpha < \mu$.  Let $\xi < \kappa$, and let $T = S^{\mu \cdot \xi + \mu}_{\mu \cdot \xi + \beta}$ for some $\beta < \mu$.
Let $G$ be generic for the Kunen collapse of $\kappa$ to $\mu^+$.
Let $\dot C$ be a $\mathbb C(G)$-name for a club in $\mu$, and $p_0 \in \mathbb C(G)$.  Let $\sigma$ be a name for a function from some $\delta < \nu$ to the ordinals, and assume $\delta > \cf(\nu)$. Let $\vec{Z}$ be as in the proof of Theorem \ref{theorem_rigid_saturated_mu_not_succ_sing}. Let $\theta$ be a sufficiently large regular cardinal, and let  $\langle M_\alpha : \alpha < \mu \rangle$ be a continuous increasing sequence of elementary submodels of $(H_{\theta},\in,<_\theta)$ where $<_\theta$ is a well-order of $H_\theta$, each of size $\nu$ and having transitive intersection with $\mu$, such that $\{\nu,\mu, f, \sigma, \dot C,p_0,\mathbb C(G),\xi,\vec{Z}\} \in M_0$. Build the models such that for each $\alpha$, $M_\alpha \cap \mu \in \mu$ and $\langle M_\beta : \beta \leq \alpha \rangle \in M_{\alpha+1}$.  We assume $\GCH$ holds, which implies that $\mathcal P(\alpha) \subseteq M_0$ for all $\alpha < \nu$.

Let $D = \{ \alpha : M_\alpha \cap \mu = \alpha \}$, and let $A \subseteq D$ be a closed subset of order-type $\delta^+$ such that $\{ z^{\mu \cdot \xi + \mu}_i : i \in A \} \subseteq T$.  Fix a cofinal sequence $\langle \gamma_i : i < \cf(\nu) \rangle$ in $\nu$, where $\gamma_0 \geq \delta$.  Let $h : [A]^2 \to \cf(\nu)$ be defined as $h(\{\alpha,\beta\}) =$ the least $i$ such that $f(\alpha,\beta) < \gamma_i$, when $\alpha> \beta$.  Using the Erd\H{o}s-Rado Theorem, let $B \subseteq A$ have order-type $\delta$ and be homogeneous in the coloring $h$, say of color $\eta$.

Now we construct a descending chain $\langle p_i : i \leq \delta \rangle$ below $p_0$ as before. Let $\langle \alpha_i : i \leq \delta \rangle$ enumerate the closure of $B$, except that we skip the first successors of limits.
Given $p_i \in M_{\alpha_{i+1}}$, let $p_{i+1}$ be the $<_\theta$-least condition such that:
\begin{enumerate}
\item $\alpha_i \leq \max p_{i+1}(\gamma)$ for all $\gamma \in \dom(p_{i+1})$.
\item $p_{i+1}$ decides $\sigma(i)$ and some $\beta_{i+1} \in \dot C$ such that $\alpha_i \leq \beta_{i+1}$.
\end{enumerate}
At limit $i$, we define $\beta_i$ and $p_i$ as before.  The key thing to check is that at such a stage, $p_i \in M_{\alpha_{i+1}}$.  Since we skipped successors of limit points of $B$, there is some $\alpha^* \in M_{\alpha_{i+1}} \cap B \setminus \alpha_i$.
Since $f(\alpha^*,\cdot)[B \cap \alpha_i]$ is a subset of $\gamma_\eta$, $\langle \alpha_k : k \leq i \rangle \in M_{\alpha_{i+1}}$.  Thus the sequence $\langle p_k : k \leq i \rangle $
is definable from parameters in $M_{\alpha_{i+1}}$.

\section{The number of automorphisms}\label{section_questions}

%

In contrast to the application of this kind of coding method by Friedman and Magidor, we do not have such fine control over the number of automorphisms of boolean algebras.  For suppose $\mathbb B$ is a boolean algebra.  Let $\pi$ be a nontrivial automorphism of $\mathbb B$.  Let $a$ be such that $\pi(a) \not= a$.  Without loss of generality, $a \nleq \pi(a)$.  Let $b \leq a$ be such that $b \perp \pi(a)$.  Then $b \perp \pi(b)$.  Note that $\mathbb B {\restriction} b \cong\mathbb B {\restriction} \pi(b)$.
Now we define another automorphism $\sigma$.  Let $d \leq b$.  For $p \in \mathbb B$, define $\sigma(p) =$
\[ (p \wedge \neg(b \vee \pi(b))) \vee \pi(p \wedge d) \vee (p \wedge (b \setminus d)) \vee \pi^{-1}(p \wedge \pi(d)) \vee (p \wedge \pi(b \setminus d)). \]
The idea is simply to interchange two isomorphic cones below two elements of a five-element partition. Thus the number of automorphisms is at least $| \mathbb B {\restriction} b|$.  If $\mathbb B = \mathcal P(\kappa) / I$ for a normal ideal $I$ on a successor cardinal $\kappa$, then since $\mathbb B$ is nowhere $\kappa$-c.c., $| \mathbb B {\restriction} b | = 2^{\kappa}$.

More automorphisms might exist.  Suppose $2^{\omega_1} = \omega_2$ and $I$ is a normal ideal on $\omega_1$ such that $\mathcal P(\omega_1)/I \cong \mathcal B( \Col(\omega,\omega_1) \times \Add(\omega,\omega_2))$\footnote{This is consistent relative to infinitely many Woodin cardinals; see \cite{Woodin}.}.  Any two distinct functions from $\omega_2$ to $2$ induce distinct bit-flipping automorphisms of $\Add(\omega,\omega_2)$.  Thus there are $2^{\omega_2}$ many automorphisms, the maximum possible number.

\begin{question}
Can there exists a precipitous normal ideal $I$ on a successor cardinal $\kappa$ such that the number of automorphisms of $\mathcal P(\kappa) / I$ is greater than 1 and less than $2^{2^\kappa}$?
\end{question}


\end{document}